\pgfplotsset{compat = 1.14}
\DeclarePairedDelimiter{\floor}{\lfloor}{\rfloor}
\definecolor{TomColBack}{HTML}{F62217}
\definecolor{TomCol}{HTML}{16EAF5}
\newcommand\smallO{
  \mathchoice
    {{\scriptstyle\mathcal{O}}}% \displaystyle
    {{\scriptstyle\mathcal{O}}}% \textstyle
    {{\scriptscriptstyle\mathcal{O}}}% \scriptstyle
    {\scalebox{.6}{$\scriptscriptstyle\mathcal{O}$}}%\scriptscriptstyle
  }
\definecolor{mylilas}{HTML}{CC0099}
\definecolor{mygreen}{rgb}{0,0.6,0} %groene kleur voor comments in code
\definecolor{mybackground}{HTML}{F7BE81}
\definecolor{redText}{rgb}{1,0,0}
\definecolor{blueText}{HTML}{0080FF}
\definecolor{greenText}{HTML}{00e600}
\definecolor{myred}{RGB}{220,43,25}
\definecolor{mygreen}{RGB}{0,146,64}
\definecolor{myblue}{RGB}{0,143,224}
\definecolor{MarkCol}{HTML}{B216FA}
\definecolor{MarkColBack}{HTML}{5FFB17}
\newcommandx{\Mark}[2][1=]{\todo[linecolor = MarkCol, backgroundcolor = MarkColBack!30, bordercolor = MarkColBack,#1]{#2}}
\tikzset{
myshape/.style={
  rectangle split,
  minimum height=1.5cm,
  rectangle split horizontal,
  rectangle split parts=8, 
  draw, 
  anchor=center,
  },
mytri/.style={
  draw,
  shape=isosceles triangle,
  isosceles triangle apex angle=60,
  inner xsep=0.65cm
  }
}
\newtheorem{theorem}{Theorem}[section]
\newtheorem{lemma}{Lemma}[section]
\newtheorem{corollary}{Corollary}[section]
\newtheorem{remark}{Remark}[section]
\newtheorem{assumption}{Assumption}[section]
\DeclareMathOperator*{\argmax}{arg\,max}
\numberwithin{equation}{section}
\def\@adminfootnotes{%
  \let\@makefnmark\relax  \let\@thefnmark\relax
  \ifx\@empty\@date\else \@footnotetext{\@setdate}\fi%%   <------ added
  \ifx\@empty\@subjclass\else \@footnotetext{\@setsubjclass}\fi
  \ifx\@empty\@keywords\else \@footnotetext{\@setkeywords}\fi
  \ifx\@empty\thankses\else \@footnotetext{%
    \def\par{\let\par\@par}\@setthanks}%
  \fi
}
\begin{document}

\author[1]{\small Christianen, M.H.M.}
\author[1]{\small Janssen, A.J.E.M.}
\author[1,2]{\small Vlasiou, M.}
\author[1,3]{\small Zwart, B.}

\affil[1]{\footnotesize Eindhoven University of Technology}
\affil[2]{\footnotesize University of Twente}
\affil[3]{\footnotesize Centrum Wiskunde \& Informatica}

\title{Asymptotic analysis of Emden-Fowler type equation with an application to power flow models}
\date{}
\maketitle

\begin{abstract}
Emden-Fowler type equations are nonlinear differential equations that appear in many fields such as mathematical physics, astrophysics and chemistry. In this paper, we perform an asymptotic analysis of a specific %case of a 
Emden-Fowler type equation that emerges in a queuing theory context as an approximation of voltages under a well-known power flow model. Thus, we place Emden-Fowler type equations in the context of electrical engineering. We derive properties of the continuous solution of this specific Emden-Fowler type equation and study the asymptotic behavior of its discrete analog. We conclude that the discrete analog has the same asymptotic behavior as the classical continuous Emden-Fowler type equation that we consider.
\end{abstract}

%\author[1]{\small Christianen, M.H.M.}
%\author[1]{\small Janssen, A.J.E.M.}
%\author[1,2]{\small Vlasiou, M.}
%\author[1,3]{\small Zwart, B.}

%\affil[1]{\footnotesize Eindhoven University of Technology}
%\affil[2]{\footnotesize University of Twente}
%\affil[3]{\footnotesize Centrum Wiskunde \& Informatica}

%\title{Asymptotic analysis of a \red{discrete} Emden-Fowler type equation with an application to power flow models}
%\date{}
%\maketitle

\section{Introduction}

%\red{EF equations and their applicability}\\
Many problems in mathematical physics, astrophysics and chemistry can be modeled by an Emden-Fowler type equation of the form
\begin{align}
\frac{d}{dt}\left(t^{\rho}\frac{du}{dt} \right)\pm t^{\sigma}h(u) = 0,\label{eq:general_fowler_emden}
\end{align} where $\rho,\sigma$ are real numbers, the function $u:\mathbb{R}\to\mathbb{R}$ is twice differentiable and $h: \mathbb{R}\to\mathbb{R}$ is some given function of $u$. For example, choosing $h(u)=u^n$ for $n\in\mathbb{R}$, $\rho=1$, $\sigma=0$ and plus sign in \eqref{eq:general_fowler_emden}, is an important equation in the study of thermal behavior of a spherical cloud of gas acting under the mutual attraction of its molecules and subject to the classical laws of thermodynamics \cite{Bellman1953, Davis}. Another example is known as \emph{Liouville's equation}, which has been studied extensively in mathematics \cite{Dubrovin1985}. This equation can be reduced to an Emden-Fowler type equation with $h(u)=e^u$, $\rho = 1,\sigma=0$ and plus sign \cite{Davis}. For more information on different applications of Emden-Fowler type equations, we refer the reader to \cite{Wong1975}.

%\red{Relation LE and EF equations}\\
%\Mark{The LE equation that we study is this}
% It can be shown that \eqref{eq:voltages_approx} belongs to equations of Lane-Emden type, since the transformation $f=xz$ leads to the following:
%\begin{align*}
%z''+\frac{2}{x}z'-\frac{k}{xz} = 0,
%\end{align*} but with transformed boundary conditions.

%\red{EF equations and their applicability}\\
%\Mark{They are related to EF equations, which appeared to these areas. Related to Liouville equations??}
%A simple transformation eliminates the middle term of the left-hand side in \eqref{eq:general_lane_emden}. The resulting equation is known in the literature as an Emden-Fowler equation and has the form
% Examples of FE equations.

%\red{The equation we study}\\ 
In this paper, we study the Emden-Fowler type equation where $h(u) = u^{-1}$, $\rho = 0$, $\sigma = 0$, with the minus sign in \eqref{eq:general_fowler_emden}, and initial conditions $u(0)=k^{-1/2}, u'(0)=k^{-1/2}w$ for $w\geq 0$. For a positive constant $k>0$, we consider the change of variables $u=k^{-1/2}f$, with resulting equation
\begin{align}
\frac{d^2f}{dt^2} = \frac{k}{f},\quad t\geq 0; \quad f(0)=1,f'(0)=w.\label{eq:voltages_approx}
\end{align}

%\red{Applicability of equation we study}\\
This specific Emden-Fowler type equation \eqref{eq:voltages_approx} arises in a queuing model \cite{Christianen2021}, modeling the queue of consumers (e.g.\ electric vehicles (EVs)) connected to the power grid. The distribution of electric power to consumers leads to a resource allocation problem which must be solved subject to a constraint on the voltages in the network. These voltages are modeled by a power flow model 
known as the Distflow model; see Section \ref{subsec:background_voltages} for background. The Distflow model equations are given by a discrete version of the nonlinear differential equation \eqref{eq:voltages_approx} and can be described as
\begin{align}
V_{j+1}-2V_j+V_{j-1} = \frac{k}{V_j},\quad j=1,2,\ldots; \quad V_0 = 1, V_1 = 1+k.\label{eq:voltages_distflow}
\end{align} 

%For more details on the background of the origin of the approximation of voltages, see Section \ref{subsec:background_voltages}.

%In \cite{Christianen2021}, we study the stability of the queuing model, while in this paper, we delve more into the asymptotics of the solution to \eqref{eq:}. 

%\red{Contributions of the paper}\\
In this paper, we study the asymptotic behavior and associated properties of the solution of \eqref{eq:voltages_approx} using differential and integral calculus, and show its numerical validation, 
i.e., we show that the solutions of \eqref{eq:voltages_approx}
have asymptotic behavior
\begin{align}
f(t)\sim t\left(2k\ln(t)\right)^{1/2},\quad t\to\infty,\label{eq:continuous_asympt_behavior}
\end{align} which can be used in the study of any of the aforementioned resource allocation problems. It is natural to expect that the discrete version  \eqref{eq:voltages_distflow} of the Emden-Fowler type equation has the asymptotic behavior of the form \eqref{eq:continuous_asympt_behavior} as well. However, to show \eqref{eq:discrete_asympt_behavior} below, is considerably more challenging than in the continuous case, and this is the main technical challenge addressed in this work.
We show the asymptotic behavior of the discrete recursion, as in \eqref{eq:voltages_distflow} to be
\begin{align}
V_j \sim j\left(2k\ln(j)\right)^{1/2},\quad j\to\infty.\label{eq:discrete_asympt_behavior}
\end{align}\\
%\red{Literature asymptotic behavior of continuous FE}\\
There is a huge number of papers that deal with various properties of solutions of Emden-Fowler differential equations \eqref{eq:general_fowler_emden} and especially in the case where $h(u)=u^n$ or $h(u)=\exp(nu)$ for $n\geq 0$. In this setting, for the asymptotic properties of solutions of an Emden-Fowler equation, we refer to \cite{Bellman1953}, \cite{Wong1975} and \cite{Fowler1930}. To the best of our knowledge, \cite{Mehta1971} is the only work that discusses asymptotic behavior in the case $n=-1$, however not the same asymptotic behavior as we study in this paper. More precisely, the authors of \cite{Mehta1971} study the more general Emden-Fowler type equation with $h(u)=u^n,\ n\in\mathbb{R},\ \rho+\sigma = 0$ and minus sign in \eqref{eq:general_fowler_emden}.  In \cite{Mehta1971}, the more general equation appears in the context of the theory of diffusion and reaction governing the concentration $u$ of a substance disappearing by an %$-1$th order 
isothermal reaction at each point $t$ of a slab of catalyst. When such an equation is normalized so that $u(t)$ is the concentration as a fraction of the concentration outside of the slab and $t$ the distance from the central plane as a fraction of the half thickness of the slab, the parameter $\sqrt{k}$ may be interpreted as the ratio of the characteristic reaction rate to the characteristic diffusion rate. This ratio is known in the chemical engineering literature as the Thiele modulus. In this context, it is natural to keep the range of $t$ finite and solve for the Thiele modulus as a function of the concentration of the substance $u$. Therefore, \cite{Mehta1971} studies the more general Emden-Fowler type equation for $u$ as a function of $\sqrt{k}$ and study asymptotic properties of the solution as $k\to\infty$. However, here we solve an Emden-Fowler equation for the special case $n=-1$ and for any given Thiele modulus $k$, and study what happens to the concentration $u(t)$ as $t$ goes to infinity, rather than $k$ to infinity.

Although the literature devoted to continuous Emden-Fowler equations and generalizations is very rich, there are not many papers related to the discrete Emden-Fowler equation \eqref{eq:voltages_distflow} or to more general second-order non-linear discrete equations of Emden-Fowler type within the following meaning. Let $j_0$ be a natural number and let $\mathbb{N}(j_0)$ denote the set of all natural numbers greater than or equal to a fixed integer $j_0$, that is,
\begin{align*}
\mathbb{N}(j_0):=\{j_0,j_0+1,\ldots\}.
\end{align*} Then, a second-order non-linear discrete equation of Emden-Fowler type
\begin{align}
\Delta^2 u(j)\pm j^{\alpha}u^m(j) = 0,\label{eq:general_discrete_emden_fowler}
\end{align} is studied, where $u:\mathbb{N}(j_0)\to\mathbb{R}$ is an unknown solution, $\Delta u(j):=u(j+1)-u(j)$ is its first-order forward difference, $\Delta^2 u(j):= \Delta(\Delta u(j))=u(j+2)-2u(j+1)+u(j)$ is its second-order forward difference, and $\alpha,m$ are real numbers. A function $u^*:\mathbb{N}(j_0)\to\mathbb{R}$ is called a solution of \eqref{eq:general_discrete_emden_fowler} if the equality
\begin{align*}
\Delta^2 u^*(j)\pm j^{\alpha}(u^*(j))^m = 0
\end{align*} holds for every $j\in\mathbb{N}(j_0)$.
The work done in this area focuses on finding conditions that guarantee the existence of a solution of such discrete equations. In \cite{Diblik2009}, the authors consider the special case of \eqref{eq:general_discrete_emden_fowler} where $\alpha = -2$, write it as a system of two difference equations, and prove a general theorem for this that gives sufficient conditions that guarantee the existence of at least one solution. In \cite{Akin-Bohnera2003, Erbe2012}, the authors replace the term $j^{\alpha}$ in \eqref{eq:general_discrete_emden_fowler} by $p(j)$, where the function $p(j)$ satisfies some technical conditions, and find conditions that guarantee the existence of a non-oscillatory solution. In \cite{Astashova2021,Migda2019}, the authors find conditions under which the nonlinear discrete equation in \eqref{eq:general_discrete_emden_fowler} with $m$ of the form $p/q$ where $p$ and $q$ are integers such that the difference $p-q$ is odd, has solutions with asymptotic behavior when $j\to\infty$ that is similar to a power-type function, that is,
\begin{align*}
u(j)\sim a_{\pm}j^{-s},\quad j\to\infty,
\end{align*} for constants $a_{\pm}$ and $s$ defined in terms of $\alpha$ and $m$. However, we study the case $m=-1$ and this does not meet the condition that $m$ is of the form $p/q$ where $p$ and $q$ are integers such that the difference $p-q$ is odd. 

%\red{Structure paper}\\
The paper is structured as follows. 
In Section \ref{subsec:background_voltages}, we present the application that motivated our study of particular equations in \eqref{eq:voltages_approx} and \eqref{eq:voltages_distflow}.
%In Section \ref{sec:modeling_assumptions}, we provide a detailed model description. In particular, we introduce the queuing model, the distribution network model and the power flow model. 
We present the main results in two separate sections. In Section \ref{SEC:ASYMP_F(T)}, we present the asymptotic behavior and associated properties of the continuous solution of the differential equation in \eqref{eq:voltages_approx}, while in Section \ref{SEC:DISCRETE_RESULTS}, we present the asymptotic behavior of the discrete recursion in \eqref{eq:voltages_distflow}. The proofs of the main results in the continuous case, except for the results of Section \ref{SUBSEC:ASSOCIATED_PROPERTIES}, and discrete case can be found in Sections \ref{SEC:PROOFS_CONTINUOUS} and \ref{sec:proofs_discrete}, respectively. We finish the paper with a conclusion in Section \ref{sec:conclusion}. In the appendices, we gather the proofs for the results in Section \ref{SUBSEC:ASSOCIATED_PROPERTIES}. 

%Model dat we bekijken
\section{Background on motivational application}\label{subsec:background_voltages}
%\red{Queuing process of EV charging}\\
Equation \eqref{eq:voltages_approx} emerges in the process of charging electric vehicles (EVs) by considering their random arrivals, their stochastic demand for energy at charging stations, and the characteristics of the electricity \emph{distribution network}. This process can be modeled as a queue, with EVs representing \emph{jobs}, and charging stations classified as \emph{servers}, constrained by the physical limitations of the distribution network \cite{Aveklouris2019b,Christianen2021}. 

%\red{Distribution network}\\
An electric grid is a connected network that transfers electricity from producers to consumers. It consists of generating stations that produce electric power, high voltage transmission lines that carry power from distant sources to demand centers, and distribution lines that connect individual customers, e.g., houses, charging stations, etc. We focus on a network that connects a generator to charging stations with only distribution lines. Such a network is called a distribution network.

%\red{Voltage drop constraint}\\
In a distribution network, distribution lines have an impedance, which results to voltage loss during transportation. Controlling the voltage loss ensures that every customer receives safe and reliable energy \cite{Kerstinga}. Therefore, an important constraint in a distribution network is the requirement of keeping voltage drops on a line under control.

%\red{Model assumptions}\\
In our setting, we assume that the distribution network, consisting of one generator, several charging stations and distribution lines with the same physical properties, has a line topology. The generator that produces electricity is called the \emph{root node}. Charging stations consume power and are called the \emph{load nodes}. Thus, we represent the distribution network by a graph (here, a line) with a root node, load nodes, and edges representing the distribution lines. Furthermore, we assume that EVs arrive at the same rate at each charging station. 

%\red{Power flow equations}\\
In order to model the power flow in the network, we use an approximation of the alternating current (AC) power flow equations \cite{Molzahn2019}. These power flow equations characterize the steady-state relationship between power injections at each node, the voltage magnitudes, and phase angles that are necessary to transmit power from generators to load nodes. We study a load flow model known as the \emph{branch flow model} or the \emph{Distflow model} \cite{Low2014d,BaranWu1989}. Due to the specific choice for the network as a line, the same arrival rate at all charging stations, distribution lines with the same physical properties, and the voltage drop constraint, the power flow model has a recursive structure, that is, the voltages at nodes $j=0,\ldots,N-1$, are given by recursion \eqref{eq:voltages_distflow}. Here, $N$ is the root node, and $V_0=1$ is chosen as normalization.
 This recursion leads to real-valued voltages and ignores line reactances and reactive power, which is a reasonable assumption in distribution networks. We refer to \cite{Christianen2021} for more detail. %\red{I would drop the equation already in section 1}

\section{Main results of continuous Emden-Fowler type equation}\label{SEC:ASYMP_F(T)}
%In this section, we present our main results, which concern the solution of \eqref{eq:voltages_approx} and its asymptotic behavior.  %At the left-hand side of the difference equation \eqref{eq:voltages_distflow}, we recognize the central difference approximation of a second derivative. Hence, the continuous analog of the voltages $V_j, j=0,\ldots,j$ is given by the function $f:[0,\infty)\to[1,\infty)$ which is the solution to the differential equation 
%\begin{align*}
%f''(t) = \frac{k}{f(t)},\quad t\geq 0,
%\end{align*} where $f(0)=1$ and $f(1)=1+k$.

In this section, we study the asymptotic behavior of the solution $f$ of \eqref{eq:voltages_approx}. To do so, we present in Lemma \ref{lemma:solution_f} the solution of a more general differential equation. Namely, we consider a more general initial condition $f(0)=y>0$.

The solution $f$ presented in Lemma \ref{lemma:solution_f} allows us to study the asymptotic behavior of $f_0(x)$, i.e., the solution of the differential equation in Lemma \ref{lemma:solution_f} where $k=1, y=1$ and $w=0$, or in other words, the solution of the differential equation $f''(x)=1/f(x)$ with initial conditions $f(0)=1$ and $f'(0)=0$; see Theorem \ref{THM:LIMITING_BEHAVIOR}. We can then derive the asymptotic behavior of $f$; see Corollary \ref{corollary:asymp_f}. 

The following theorem provides the limiting behavior of $f_0(x)$, i.e., the solution of Equation \eqref{eq:voltages_approx} where $k=1, y=1$ and $w=0$.
\begin{theorem}
Let $f_0(x)$ be the solution of \eqref{eq:voltages_approx} for $k=1, y=1$ and $w=0$.  The limiting behavior of the function $f_0(x)$ as $x\to\infty$ is given by,
\begin{align*}
f_0(x) = z(\ln(z))^{\frac{1}{2}}\left[1+\mathcal{O}\left(\frac{\ln(\ln(z))}{\ln(z)} \right) \right]
\end{align*} where $z=x\sqrt{2}$.
\label{THM:LIMITING_BEHAVIOR}
\end{theorem}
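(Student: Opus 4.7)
The plan is to reduce the second-order ODE to a first-order separable equation via the standard energy trick, invert the resulting implicit relation, and then bootstrap the logarithmic expansion. Multiplying $f_0''=1/f_0$ by $f_0'$ and integrating, the initial data $f_0(0)=1$, $f_0'(0)=0$ force the constant of integration to vanish, giving the first integral $(f_0'(x))^2 = 2\ln f_0(x)$. Strict convexity together with $f_0'(0)=0$ forces $f_0>1$ and $f_0'>0$ for $x>0$, so one may separate variables; substituting $f=e^t$ yields the implicit representation
\begin{equation*}
x \;=\; \int_{1}^{f_0(x)} \frac{df}{\sqrt{2\ln f}} \;=\; \int_{0}^{T(x)} \frac{e^t}{\sqrt{2t}}\,dt, \qquad T(x) := \ln f_0(x).
\end{equation*}

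Next I would extract the large-$T$ behaviour of this integral. The integrand is integrable at $0$ and the contribution from a fixed neighbourhood of the origin is bounded, so a single integration by parts (differentiating $1/\sqrt{2t}$, integrating $e^t$) produces the Laplace-type expansion
\begin{equation*}
\int_{0}^{T} \frac{e^t}{\sqrt{2t}}\,dt \;=\; \frac{e^{T}}{\sqrt{2T}}\bigl(1+\mathcal{O}(1/T)\bigr), \qquad T\to\infty,
\end{equation*}
since the next term is of relative order $1/(2T)$. Equating this to $x$ and exponentiating rearranges to
\begin{equation*}
f_0(x) \;=\; x\sqrt{2\,T(x)}\,\bigl(1+\mathcal{O}(1/T(x))\bigr),
\end{equation*}
and taking logarithms yields the self-referential identity $T(x) = \ln x + \tfrac12\ln(2T(x)) + \mathcal{O}(1/T(x))$. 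A first bootstrap shows $T(x)\sim\ln x$; re-inserting this on the right-hand side gives the sharper $T(x) = \ln x + \tfrac12\ln(2\ln x) + \mathcal{O}(\ln\ln x/\ln x)$.

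The last step is bookkeeping in the variable $z=x\sqrt{2}$. Inserting the refined expression for $T(x)$ into $\sqrt{2T(x)}$ and expanding the square root via $\sqrt{1+\varepsilon}=1+\mathcal{O}(\varepsilon)$ produces $\sqrt{2T(x)} = \sqrt{2\ln x}\bigl(1+\mathcal{O}(\ln\ln x/\ln x)\bigr)$. Since $\ln z = \ln x + \tfrac12\ln 2$, one gets $x\sqrt{2\ln x} = z\sqrt{\ln z}\bigl(1+\mathcal{O}(1/\ln z)\bigr)$, and the smaller $\mathcal{O}(1/\ln z)$ error is absorbed into the dominant $\mathcal{O}(\ln\ln z/\ln z)$, giving exactly the stated expansion. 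The only genuinely delicate point is keeping the relative error at each stage at the correct level $\ln\ln z/\ln z$, rather than the crude $1/\ln z$ one first obtains, when inverting $T\mapsto x$ and then pushing the expansion through the square root; everything else is a routine application of the energy identity and integration by parts.
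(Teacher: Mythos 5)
Your argument is correct and follows essentially the same route as the paper: the energy identity gives the implicit representation $x=\int_0^{T}e^{t}/\sqrt{2t}\,dt$ with $T=\ln f_0(x)$, which under $t=u^2$ is exactly the paper's relation $\int_0^{U}e^{u^2}\,du=x/\sqrt{2}$ with $U=\sqrt{T}$, and both proofs then establish $e^{T}=x\sqrt{2T}\,(1+\mathcal{O}(1/T))$ and invert by bootstrapping the logarithm. The only difference is cosmetic: you obtain the $\mathcal{O}(1/T)$ relative error by integration by parts (splitting off a neighbourhood of the origin, as you note), whereas the paper uses elementary two-sided series inequalities and a fixed-point iteration for $y=(\ln(1+zy))^{1/2}$.
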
 We first derive an implicit solution to Equation \eqref{eq:voltages_approx} where $k=1, y=1$ and $w=0$. Namely, we derive $f_0(x)$ in terms of a function $U(x)$; cf.\ Lemma \ref{lemma:solution_f}. We show, using Lemma \ref{lemma:ineq_I(y)}, that we can derive an approximation of $U(x)$ by iterating the following equation: 
\begin{align}
\frac{\exp(U^2)-1}{2U} = \frac{x}{\sqrt{2}}.\label{eq:bound_iterative_method_behavior}
\end{align} We can then use this approximation of $U(x)$ in the implicit solution of the differential equation to derive the asymptotic behavior of Theorem \ref{THM:LIMITING_BEHAVIOR}. The proofs of Theorem \ref{THM:LIMITING_BEHAVIOR} and Lemma \ref{lemma:ineq_I(y)} can be found in Section \ref{SEC:PROOFS_CONTINUOUS}. We now give the necessary lemmas for the proof of Theorem \ref{THM:LIMITING_BEHAVIOR}.

% In other words, we take the equation
%\begin{align}
%\int_0^{U(x)} \exp(u^2)du = \frac{x}{\sqrt{2}},\label{eq:iterative_method_behavior}
%\end{align} where $U(x)=(\ln(f_0(x)))^{\frac{1}{2}}$ and ask for the behavior of $f_0(x)$ as $x\to\infty$. Instead of \eqref{eq:iterative_method_behavior}, we consider the equation and obtain an approximation of $U$. Once we have this approximation, we can substitute it on the left-hand side of \eqref{eq:bound_iterative_method_behavior}, and we obtain a new approximation, better than the former. After two iterations, we are left with the desired asymptotic behavior of $U$, since this yields the asymptotic behavior of $f_0(x)$ as given in Theorem \ref{THM:LIMITING_BEHAVIOR}. The fact that we can use \eqref{eq:bound_iterative_method_behavior} instead of \eqref{eq:iterative_method_behavior} is due to the inequalities in Lemma \ref{lemma:ineq_I(y)}. 

\begin{lemma}[Lemma D.1 in \cite{Christianen2021}]\label{lemma:solution_f}
For $t\geq 0,k>0,y>0,w\geq 0$, the nonlinear differential equation
\begin{align*}
f''(t) = \frac{k}{f(t)}
\end{align*} with initial conditions $f(0)=y$ and $f'(0)=w$ has the unique solution 
\begin{align} 
f(t) = cf_0(a+bt).\label{eq:f}
\end{align} Here, $f_0$ is given by
\begin{align}\label{eq:f_0(x)}
f_0(x) = \exp(U^2(x)),\quad \text{for}~x\geq 0,
\end{align} where $U(x)$, for $x\geq 0$, is given by
\begin{align}\label{eq:Ux}
\int_0^{U(x)}\exp(u^2)~du = \frac{x}{\sqrt{2}},
\end{align}and where the constants $a,b,c$ are given by
\begin{align}
a & = \sqrt{2}\int_0^\frac{w}{\sqrt{2k}} \exp(u^2)~du, \label{eq:a}\\
b & = \frac{\sqrt{k}}{y}\exp\left(\frac{w^2}{2k}\right),\label{eq:b}\\
c & = y\exp\left(\frac{-w^2}{2k} \right).\label{eq:c}
\end{align}
\label{LEMMA:DIFF_EQ1}
\end{lemma}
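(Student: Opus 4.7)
The plan is to verify the claimed formula by direct substitution, since the ansatz $f(t) = c\,f_0(a+bt)$ is given explicitly; uniqueness is then a standard Picard--Lindelöf application. The argument naturally splits into two parts: (i) check that $f_0$ itself solves the canonical problem $f_0''(x) = 1/f_0(x)$ with $f_0(0)=1$, $f_0'(0)=0$; (ii) determine the constants $a,b,c$ from the original initial conditions.

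For (i), I would differentiate the implicit relation \eqref{eq:Ux}: since $u\mapsto\int_0^u e^{s^2}\,ds$ is a smooth bijection of $[0,\infty)$ onto $[0,\infty)$, the inverse function $U$ is well-defined, smooth, and strictly increasing, with $U(0)=0$. Implicit differentiation gives $U'(x) = \tfrac{1}{\sqrt{2}}\exp(-U^2(x))$. Then the chain rule applied to \eqref{eq:f_0(x)} yields $f_0'(x) = 2U(x)U'(x)\exp(U^2(x)) = \sqrt{2}\,U(x)$ and $f_0''(x) = \sqrt{2}\,U'(x) = \exp(-U^2(x)) = 1/f_0(x)$, and the initial values $f_0(0)=1$, $f_0'(0)=0$ are immediate.

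For (ii), applying the chain rule to $f(t) = c\,f_0(a+bt)$ gives $f''(t) = cb^2 f_0''(a+bt) = c^2b^2/f(t)$, so the ODE forces $c^2b^2 = k$. From $f'(0) = cb f_0'(a) = \sqrt{k}\cdot \sqrt{2}\,U(a) = w$ we obtain $U(a) = w/\sqrt{2k}$; plugging this into the defining integral \eqref{eq:Ux} recovers \eqref{eq:a}. Then $f(0) = c\exp(U^2(a)) = c\exp(w^2/(2k)) = y$ gives \eqref{eq:c}, and $b = \sqrt{k}/c$ gives \eqref{eq:b}.

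The one nontrivial point is uniqueness, since the nonlinearity $1/f$ is only locally Lipschitz on $\{f>0\}$. Because $f_0'(x) = \sqrt{2}\,U(x) \geq 0$ and $f_0(0)=1$, the function $f_0$ is nondecreasing and bounded below by $1$ on $[0,\infty)$, so $f(t) \geq c > 0$ on its domain and the Picard--Lindelöf theorem applies globally; this rules out any other solution. I do not anticipate a real obstacle here, as the lemma is essentially a bookkeeping exercise once one notices the substitution $f = \exp(U^2)$; the only genuinely clever ingredient is recognising that the first integral $(f')^2 = 2k\ln f + C$ of the autonomous equation suggests precisely this exponential parametrisation.
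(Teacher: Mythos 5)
Your verification is correct: the implicit differentiation of \eqref{eq:Ux} giving $U'=\tfrac{1}{\sqrt 2}e^{-U^2}$, hence $f_0'=\sqrt 2\,U$ and $f_0''=1/f_0$ with $f_0(0)=1$, $f_0'(0)=0$, and the scaling relations $c^2b^2=k$, $U(a)=w/\sqrt{2k}$, $c=y e^{-w^2/2k}$ all check out against \eqref{eq:a}--\eqref{eq:c}. There is, however, nothing in the paper to compare against: the lemma is imported verbatim from the cited reference, and Section \ref{SEC:PROOFS_CONTINUOUS} explicitly defers its proof there. The ``forward'' derivation you correctly guess at the end --- multiply by $f'$, integrate to get $(f')^2=w^2+2k\ln(f/y)$, separate variables, and substitute $v=(W^2+\ln(s/y))^{1/2}$ to land on $\int e^{v^2}\,dv$ --- is exactly the computation the paper carries out in Appendix Lemma \ref{lemma:alternative_f}, so your substitution-plus-verification route is simply that argument run in reverse; it buys a shorter, self-contained check at the cost of having to pull the ansatz out of thin air. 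One small tightening for the uniqueness step: positivity of the \emph{constructed} solution is not quite enough; you should note that \emph{any} solution with $f(0)=y>0$, $f'(0)=w\ge 0$ satisfies $f''>0$ while it remains positive, hence is nondecreasing and stays $\ge y$, so every solution lives in the region where $k/f$ is Lipschitz and the local uniqueness extends globally by the usual continuation argument. With that sentence added, the proof is complete.
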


Notice that we do not find an elementary closed-form solution of the function $f_0(x)$, since $f_0(x)$ is given in terms of $U(x)$, given implicitly by \eqref{eq:Ux}. For $x\geq 0$, the left-hand side of \eqref{eq:Ux} is equal to $\frac{1}{2}\sqrt{\pi} \text{erfi}(U(x))$ where  $\text{erfi}(z)$ is the imaginary error function, defined by 
\begin{align}
\text{erfi}(z) = -\mathrm{i}\ \text{erf}(\mathrm{i}z),
\end{align} where $\text{erf}(w) = \frac{2}{\sqrt{\pi}}\int_0^w \exp(-v^2)dv$ is the well-known error function. 
% From \eqref{eq:voltages_approx}, we get
%\begin{align}
%f'(u)f''(u) = kf'(u)/f(u),\quad 0\leq u\leq t.
%\end{align} Integrating Equation \eqref{eq:f4} over $u$ from 0 to $t$ using $f(0)=1, f'(0)=w$ we get
%\begin{align}
%\int_0^t f'(u)f''(u)du = \frac{1}{2}(f'(t))^2 - \frac{1}{2}w^2 = \int_0^t \frac{kf'(u)}{f(u)} = k\ln(f(t)). 
%\end{align} Hence, for $t>0$,
%\begin{align}
%\frac{f'(t)}{\left(w^2+2k\ln(f(t))\right)^{\frac{1}{2}}} = 1.\label{eq:alternative_f_prime}
%\end{align} Integrating $f'(u)/(w^2+2k\ln(f(t)))^{\frac{1}{2}})=1$ from $u=0$ to $u=t$, while substituting $s=f(u)\in [1,f(t)]$, we get
%\begin{align}
%\int_0^t \frac{\frac{df}{ds}\frac{ds}{du}}{(w^2+2k\ln(f(u)))^{\frac{1}{2}}} du & = \int_1^{f(t)} \frac{1}{(w^2+2k\ln(s))^{\frac{1}{2}}}ds = t.\label{eq:alternative_rep_f}
%\end{align} Introduce
%\begin{align}
%W = W(k):=\frac{w}{\sqrt{2k}},
%\end{align} then the expression becomes
%\begin{align}
%\frac{1}{\sqrt{2k}}\int_1^{f(t)} \frac{1}{(W^2+\ln(s))^{\frac{1}{2}}}ds = t.\label{eq:integral_f}
%\end{align} Substituting $v=(W^2+\ln(s))^{\frac{1}{2}}, s=\exp(v^2-w^2),ds=2v\exp(v^2-w^2)dv$ in the integral \eqref{eq:integral_f}, we get
%Nevertheless, the implicit expression for the solution $f_0(x)$ is enough to characterize its asymptotic behavior if $x\to\infty$, as we present in Theorem \ref{THM:LIMITING_BEHAVIOR}.

\begin{lemma}\label{lemma:ineq_I(y)}
For $y\geq 0$, we have the inequalities
\begin{align}
\frac{\exp(y^2)-1}{2y}\leq \int_0^y \exp(u^2)du\leq \frac{\exp(y^2)-1}{y},\label{eq:inequalities_int_exp}
\end{align} and
\begin{align}
\int_0^y \exp(u^2)du \leq \frac{\exp(y^2)-1}{2y}\left(1+\frac{2}{y^2} \right).\label{eq:inequality_exp}
\end{align}
\end{lemma}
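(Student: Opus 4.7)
The plan is to establish all three inequalities uniformly by expanding each expression as a power series in $y$ and verifying that the resulting differences have only non-negative coefficients. The key building blocks, obtained by integrating $\exp(u^2) = \sum_{n\geq 0} u^{2n}/n!$ term-by-term, are $\int_0^y \exp(u^2)\,du = \sum_{m\geq 0} \frac{y^{2m+1}}{(2m+1)\,m!}$, $\frac{\exp(y^2)-1}{2y} = \sum_{m\geq 0} \frac{y^{2m+1}}{2(m+1)!}$, $\frac{\exp(y^2)-1}{y} = \sum_{m\geq 0} \frac{y^{2m+1}}{(m+1)!}$, and $\frac{\exp(y^2)-1}{y^3} = \frac{1}{y} + \sum_{m\geq 0} \frac{y^{2m+1}}{(m+2)!}$.

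For \eqref{eq:inequalities_int_exp}, the coefficient of $y^{2m+1}$ in $\int_0^y \exp(u^2)\,du - \frac{\exp(y^2)-1}{2y}$ equals $\frac{1}{(2m+1)\,m!} - \frac{1}{2(m+1)!} = \frac{1}{2\, m!\,(m+1)(2m+1)} \geq 0$, yielding the lower bound. Similarly, the coefficient of $y^{2m+1}$ in $\frac{\exp(y^2)-1}{y} - \int_0^y \exp(u^2)\,du$ equals $\frac{1}{(m+1)!} - \frac{1}{(2m+1)\,m!} = \frac{m}{m!\,(m+1)(2m+1)} \geq 0$, yielding the upper bound.

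For \eqref{eq:inequality_exp}, I split the right-hand side as $\frac{\exp(y^2)-1}{2y}\bigl(1 + \frac{2}{y^2}\bigr) = \frac{\exp(y^2)-1}{2y} + \frac{\exp(y^2)-1}{y^3}$. After subtracting $\int_0^y \exp(u^2)\,du$, the isolated $\frac{1}{y}$ contribution is strictly positive, and combining the two fractions $\frac{1}{(m+2)!} + \frac{1}{2(m+1)!} = \frac{m+4}{2(m+1)(m+2)\,m!}$ shows that the coefficient of $y^{2m+1}$ reduces to $\frac{1}{m!}\left[\frac{m+4}{2(m+1)(m+2)} - \frac{1}{2m+1}\right]$. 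The central algebraic check is $(m+4)(2m+1) \geq 2(m+1)(m+2)$, which expands to $2m^2 + 9m + 4 \geq 2m^2 + 6m + 4$, i.e.\ $3m \geq 0$. Hence this coefficient equals $\frac{3m}{2\, m!\,(m+1)(m+2)(2m+1)} \geq 0$, and \eqref{eq:inequality_exp} follows.

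The only non-routine step is the last algebraic simplification; everything else is bookkeeping with the Taylor coefficients. A viable alternative for \eqref{eq:inequalities_int_exp} alone would be to define the differences as functions of $y$ that vanish at $y=0$ and show their derivatives are non-negative, but the series approach treats all three inequalities with a single mechanism and does not require separate monotonicity arguments for \eqref{eq:inequality_exp} (whose difference function is not globally monotone).
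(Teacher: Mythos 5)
Your proof is correct and uses essentially the same mechanism as the paper: termwise comparison of the odd-power Taylor coefficients, and all of your coefficient computations (including the key check $(m+4)(2m+1)-2(m+1)(m+2)=3m\geq 0$) are accurate. The only cosmetic difference is in \eqref{eq:inequality_exp}, where the paper first applies partial integration to isolate $\int_0^y \frac{\exp(u^2)-1}{2u^2}\,du$ and then compares series, whereas you split the right-hand side as $\frac{\exp(y^2)-1}{2y}+\frac{\exp(y^2)-1}{y^3}$ and compare coefficients directly — the two routes land on the same decomposition.
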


Now, we present the asymptotic behavior of the solution $f$ of \eqref{eq:voltages_approx}.

\begin{corollary}\label{corollary:asymp_f}
The limiting behavior of the function $f(t)$, defined in Equation \eqref{eq:f}, is given by
\begin{align}
f(t)=t\sqrt{2k\ln(t)}\left(1+\mathcal{O}\left(\frac{\ln(\ln(t))}{\ln(t)} \right)\right),\quad t\to\infty.\label{eq:f(t)_big_O}
\end{align}
\end{corollary}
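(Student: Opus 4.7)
The plan is to simply apply Theorem \ref{THM:LIMITING_BEHAVIOR} to $f_0(a+bt)$ and carefully track how the constants $a,b,c$ from Lemma \ref{lemma:solution_f} interact with the asymptotic expansion, then verify that the product $cb\sqrt{2}$ collapses to $\sqrt{2k}$.

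First I would substitute $x = a + bt$ in the conclusion of Theorem \ref{THM:LIMITING_BEHAVIOR}, so that $z = (a+bt)\sqrt{2}$. Since $b>0$ (and $a$ is a fixed nonnegative constant), as $t\to\infty$ we have
\begin{align*}
z = bt\sqrt{2}\left(1+\frac{a}{bt}\right) = bt\sqrt{2}\Bigl(1+\mathcal{O}(1/t)\Bigr),
\end{align*}
and correspondingly
\begin{align*}
\ln(z) = \ln(t) + \ln(b\sqrt{2}) + \mathcal{O}(1/t) = \ln(t)\Bigl(1+\mathcal{O}(1/\ln t)\Bigr).
\end{align*}
Taking square roots (using $\sqrt{1+u}=1+\mathcal{O}(u)$ for small $u$) gives $\sqrt{\ln z} = \sqrt{\ln t}\,(1+\mathcal{O}(1/\ln t))$. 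I would also note that the error term $\ln(\ln z)/\ln(z)$ in Theorem \ref{THM:LIMITING_BEHAVIOR} is $\mathcal{O}(\ln(\ln t)/\ln(t))$ since the additive $\mathcal{O}(1)$ shift inside the logarithm is absorbed.

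Next I would multiply these three factors together, observing that the $\mathcal{O}(1/t)$ and $\mathcal{O}(1/\ln t)$ contributions are both dominated by $\mathcal{O}(\ln\ln t/\ln t)$, so that
\begin{align*}
f_0(a+bt) = bt\sqrt{2}\sqrt{\ln t}\,\Bigl(1+\mathcal{O}(\ln\ln t/\ln t)\Bigr).
\end{align*}
Then from \eqref{eq:f} we have $f(t) = c f_0(a+bt)$, and using \eqref{eq:b} and \eqref{eq:c},
\begin{align*}
cb = y e^{-w^2/(2k)} \cdot \frac{\sqrt{k}}{y} e^{w^2/(2k)} = \sqrt{k},
\end{align*}
so the cancellation of the exponentials is the key algebraic check. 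Thus $cb\sqrt{2} = \sqrt{2k}$, and collecting terms yields
\begin{align*}
f(t) = \sqrt{2k}\,t\,\sqrt{\ln t}\,\Bigl(1+\mathcal{O}(\ln\ln t/\ln t)\Bigr) = t\sqrt{2k\ln t}\,\Bigl(1+\mathcal{O}(\ln\ln t/\ln t)\Bigr),
\end{align*}
which is exactly \eqref{eq:f(t)_big_O}.

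There is no real obstacle here since Theorem \ref{THM:LIMITING_BEHAVIOR} does all the analytic work; the only thing to be slightly careful about is that the shift $a$ and the scale factor $b\sqrt{2}$ inside the logarithm produce an $\mathcal{O}(1/\ln t)$ relative error, which must be checked to be absorbed into the stated $\mathcal{O}(\ln\ln t/\ln t)$ term. That verification is immediate because $1/\ln t \le \ln\ln t/\ln t$ for all sufficiently large $t$.
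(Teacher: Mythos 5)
Your proposal is correct and follows essentially the same route as the paper's proof: apply Theorem \ref{THM:LIMITING_BEHAVIOR} at $x=a+bt$, absorb the shift $a$ and scale $b\sqrt{2}$ into $\mathcal{O}(1/\ln t)$ relative errors dominated by $\mathcal{O}(\ln\ln t/\ln t)$, and use the cancellation $cb=\sqrt{k}$ from \eqref{eq:b}--\eqref{eq:c}. No gaps.
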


\begin{proof}[Proof of Corollary \ref{corollary:asymp_f}]
In order to derive a limit result of the exact solution of \eqref{eq:voltages_approx}, i.e. for \eqref{eq:f} with initial conditions $f(0)=1$ and $f'(0)=w$, we use the limiting behavior of the function $f_0(x)$ and the definitions of $a,b$ and $c$ as in \eqref{eq:a}--\eqref{eq:c}. Denote $v = \ln(z)$. Then, by Theorem \ref{THM:LIMITING_BEHAVIOR}, we have
\begin{align}
f(t) = cf_0(a+bt) = czv^{\frac{1}{2}}\left(1+\mathcal{O}\left(\frac{\ln(v)}{v} \right) \right).\label{eq:put_together_ft}
\end{align} In what follows, we carefully examine the quantities $czv^{\frac{1}{2}}$ and $\ln(v)/v$. First, observe that
\begin{align*}
v = \ln(z) = \ln((a+bt)\sqrt{2}) = \ln(t)+\mathcal{O}(1),\quad t>\exp(1),
\end{align*} which yields
\begin{align*}
v^{\frac{1}{2}} & = \left(\ln(t)+\mathcal{O}(1)\right)^{\frac{1}{2}} \\
& = \ln(t)^{\frac{1}{2}}\left(1+\mathcal{O}\left(\frac{1}{\ln(t)}\right) \right),\quad t>\exp(1),
\end{align*} and
\begin{align*}
\ln(v) & = \ln(\ln(t)+\mathcal{O}(1)) \\
& = \ln(\ln(t))+\mathcal{O}\left(\frac{1}{\ln(t)}\right),\quad t>\exp(1).
\end{align*} Therefore, using that $cb=\sqrt{k}$, we get
\begin{align}
czv^{\frac{1}{2}} & = c(a+bt)\sqrt{2}\ln(t)^{\frac{1}{2}}\left(1+\mathcal{O}\left(\frac{1}{\ln(t)}\right) \right) \nonumber\\
& = (t+\mathcal{O}(1))\sqrt{2k\ln(t)}\left(1+\mathcal{O}\left(\frac{1}{\ln(t)}\right) \right) \nonumber \\
& = t\sqrt{2k\ln(t)}\left(1+\mathcal{O}\left(\frac{1}{\ln(t)}\right) \right),\quad t>\exp(1),\label{eq:czsqrt(v)}
\end{align} and
\begin{align}
\frac{\ln(v)}{v} & = \frac{\ln(\ln(t))+\mathcal{O}\left(\frac{1}{\ln(t)} \right)}{\ln(t)+\mathcal{O}(1)} \nonumber \\
& = \frac{\ln(\ln(t))}{\ln(t)}\left(1+\mathcal{O}\left(\frac{1}{\ln(\ln(t))} \right) \right),\quad t>\exp(1).\label{eq:lnv_v}
\end{align} Putting the results in \eqref{eq:czsqrt(v)} and \eqref{eq:lnv_v} together in \eqref{eq:put_together_ft}, yields
\begin{align*}
f(t) = t\sqrt{2k\ln(t)}\left(1+\mathcal{O}\left(\frac{\ln(\ln(t))}{\ln(t)}\right) \right),\quad t>\exp(1).
\end{align*}
\end{proof}

\subsection{Associated properties of the ratio between $f$ and its first order approximation}\label{SUBSEC:ASSOCIATED_PROPERTIES}
In this section, we study associated properties of the ratio between $f(t)$ and its first order approximation. Using only the first term of the asymptotic expansion of \eqref{eq:f(t)_big_O}, we define
\begin{align}
g(t):= t\sqrt{2k\ln(t)}.\label{eq:f(t)_approx}
\end{align}
 
%The approximation $g(t)$ helps us study the asymptotic behavior of the discrete analog $V_j$ in Section \ref{SEC:DISCRETE_RESULTS}. 
The reason for studying this ratio, and in particular the role of $k$, is twofold: (1) the useful insights that we get for (the proof of) the asymptotic behavior in the discrete case in Section \ref{SEC:DISCRETE_RESULTS}, and (2) the applicability of Equation \eqref{eq:voltages_approx} in our motivational application, in cases where the parameter $k$ in \eqref{eq:voltages_approx} is small. % in \eqref{eq:voltages_approx} and lower and upper bounds of the ratio $f/g$. %for two reasons: the practical application for charging electric vehicles discussed in Section \ref{subsec:background_voltages}, and in particular the role of $k$, and insights in (the proof of) the asymptotic behavior of the discrete counterpart of \eqref{eq:voltages_approx}, i.e., recursion \eqref{eq:voltages_distflow}.

%To study properties of the ratio between $f(t)$ and $g(t)$, we first discuss the size and role of the parameter $k$ in Equation \eqref{eq:voltages_approx}. 

Considering the practical application for charging electric vehicles, the ratio of normalized voltages $V_j/V_0 = V_j, j=1,2,\ldots$ should be below a level $1/(1-\Delta)$, where the tolerance $\Delta$ is small (of the order $10^{-1}$), due to the voltage drop constraint. Therefore, the parameter $k$, comprising given charging rates and resistances at all stations, is normally small (of the order $10^{-3}$).

Furthermore, to match the initial conditions $V_0=1$ and $V_1 = 1+k$ of the discrete recursion with the initial conditions of the continuous analog, we demand $f(0)=1$ and $f(1) = 1+k$.
%In the left-hand side of the difference equation \eqref{eq:voltages_distflow}, we recognize the central difference approximation of a second derivative. Hence, the continuous analog of the discrete recursion $V_j,j=0,1,\ldots$ is given by the solution $f$ of differential equation \eqref{eq:voltages_approx}. 
However, notice that in our continuous analog described by \eqref{eq:voltages_approx}, we have, next to the initial condition $f(0)=1$, the initial condition $f'(0)=w$, while nothing is assumed about the value $f(1)$. The question arises whether it is possible to connect the conditions $f'(0)=w$ and $f(1)=1+k$. To do so, we use an alternative representation of $f$ given in Lemma \ref{lemma:alternative_f}. Then, using this representation, we show the existence and uniqueness of $w\geq 0$ for every $k$ such that the solution of \eqref{eq:voltages_approx} satisfies $f(1)=1+k$ in Lemma \ref{lemma:existence_uniqueness_w}. The proof of Lemmas \ref{lemma:alternative_f}--\ref{lemma:existence_uniqueness_w} can be found in Appendix \ref{sec:existence_uniqueness_w}.

The importance of the role of the parameter $k$ becomes immediate from the comparison of the functions $f(t)$ and $g(t)$ in Theorem \ref{thm:cases_k}.

\begin{theorem}\label{thm:cases_k}
Let $f(t)$ be given by \eqref{eq:f} with initial conditions $f(0)=1$, $f'(0)=w$ such that $f(1)=1+k$, and let $g(t)$ be given by \eqref{eq:f(t)_approx}. Then, there is a unique $k_c = 1.0384\ldots$ such that
\begin{enumerate}[label=(\alph*)]
\item $k\geq k_c$ implies $f(t)\geq g(t)$ for all $t\geq 1$,
\item $0<k<k_c$ implies that there are $t_1(k),t_2(k)$ with $1<t_1(k)<t_2(k)<\infty$ such that $f(t)<g(t)$ when $t_1(k)<t<t_2(k)$ and $f(t)>g(t)$ when $1\leq t<t_1(k)$ or $t>t_2(k)$. 
\end{enumerate}
\end{theorem}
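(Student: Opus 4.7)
My plan is to analyze the sign of $\phi(t) := f(t)^2 - g(t)^2 = f(t)^2 - 2kt^2\ln t$ on $[1,\infty)$, since $f,g > 0$ on $(1,\infty)$ means the sign of $\phi$ determines whether $f > g$ or $f < g$. The boundary behaviour is clear: $\phi(1) = (1+k)^2 > 0$ since $g(1) = 0$, and by refining the expansion of Corollary \ref{corollary:asymp_f} by one further order (extracting the sub-leading $\ln\ln z/(4\ln z)$ correction to $U^2$ in the proof of Theorem \ref{THM:LIMITING_BEHAVIOR}), one obtains $f(t)/g(t) = 1 + \ln\ln t/(4\ln t) + O(1/\ln t)$. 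In particular $\phi(t) \sim kt^2\ln\ln t \to +\infty$ as $t\to\infty$, \emph{regardless of} $k$ and $w$; so $\phi>0$ near both endpoints of $[1,\infty)$.

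The decisive step is to differentiate twice and use the first integral $(f')^2 = 2k\ln f + w^2$ (obtained by multiplying $f''=k/f$ by $f'$ and integrating). Together with $ff''=k$ and the explicit $(2kt^2\ln t)'' = 4k\ln t + 6k$, one finds the clean identity
\[
\phi''(t) = 2(f')^2 + 2ff'' - (g^2)''(t) = 4k\ln\!\Bigl(\tfrac{f(t)}{et}\Bigr) + 2w^2.
\]
Hence $\phi''(t)=0$ iff $f(t)/t = ec$ with $c:=e^{-w^2/(2k)}$. Because $f(0)=1$ forces $f(t)/t\to+\infty$ as $t\to 0^+$, and $f(t)\sim t\sqrt{2k\ln t}$ forces $f(t)/t\to+\infty$ as $t\to\infty$, the ratio $f/t$ attains a finite minimum; using the first integral on $(1,\infty)$ I would show that $(f/t)'$ changes sign exactly once (from negative to positive), so $\phi''$ has exactly one zero in $(1,\infty)$. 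Combined with $\phi'(1) = 2(1+k)f'(1) - 2k > 0$ (which follows from $f'(1)\geq\sqrt{2k\ln(1+k)}$ and $\ln(1+k)\geq k/(1+k)$) and $\phi'(\infty)=+\infty$, the concave-then-convex shape of $\phi$ implies that $\phi'$ has either $0$ sign changes (so $\phi$ is strictly increasing, case (a)) or exactly $2$ sign changes at points $s_1<s_2$, with $\phi$ attaining a local maximum at $s_1$ and a local minimum at $s_2$, and with $\phi(s_1)>\phi(1)>0$.

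The trichotomy is then controlled by the sign of $\phi(s_2(k))$: if $\phi(s_2(k))\geq 0$ then $\phi\geq 0$ throughout and we are in case (a); if $\phi(s_2(k))<0$ then $\phi$ has exactly two zeros $t_1<t_2$ in $(1,\infty)$ and we are in case (b). The critical threshold $k_c$ is the unique $k$ for which this minimum value vanishes, equivalently the tangency conditions $f(t_c)=g(t_c)$ and $f'(t_c)=g'(t_c)$ hold simultaneously; combined with the implicit equation $\int_0^{\ln(1+k)} e^s/\sqrt{2ks+w^2}\,ds = 1$ coming from Lemma \ref{lemma:existence_uniqueness_w}, this yields a finite system that can be solved numerically to give $k_c = 1.0384\ldots$. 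The main obstacle is twofold: (i) rigorously showing that $(f/t)'$ has exactly one sign change on $(1,\infty)$, so that the clean concave-then-convex structure of $\phi$ holds; and (ii) establishing monotonicity of $\phi(s_2(k))$ as a function of $k$, which is what guarantees uniqueness of $k_c$. Both require delicate use of the dependence $w=w(k)$ implicit in the initial-value constraint $f(1)=1+k$ rather than closed-form manipulation.
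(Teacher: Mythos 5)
Your route (analyzing $\phi=f^2-g^2$ via the identity $\phi''(t)=4k\ln\bigl(f(t)/(et)\bigr)+2w^2$, which I verified is correct) is genuinely different from the paper's, which instead works with the integral functional $F(t,k)=\int_{(W^2+\ln f)^{1/2}}^{(W^2+\ln g)^{1/2}}e^{v^2}\,dv$ and the auxiliary function $\psi$ of \eqref{eq:psi}, locating the unique maximizer $t_0(k)$ of $F(\cdot,k)$ and then proving $F(t_0(k),k)$ is strictly decreasing in $k$, positive for small $k$ and nonpositive for large $k$. However, as it stands your argument has two gaps, one minor and one fatal. The minor one: even granting that $(f/t)'=(tf'-f)/t^2$ changes sign at most once (which is immediate, since $(tf'-f)'=tf''>0$), it does \emph{not} follow that $\phi''$ has exactly one zero. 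The equation $\phi''=0$ reads $f(t)/t=e^{1-w^2/(2k)}$, and a U-shaped $f/t$ crosses a fixed level $0$, $1$ or $2$ times depending on whether $f(1)=1+k$ lies below or above that level; for $k$ near $k_c$ one has $1+k<e^{1-w^2/(2k)}$ and your picture is right, but for larger $k$ (e.g.\ $k=3$, where $w\ge k/2$ forces $e^{1-w^2/(2k)}\le e^{5/8}<1+k$) the shape is convex--concave--convex rather than concave--convex. The qualitative conclusion ($\phi'$ has $0$ or $2$ sign changes, hence $\phi$ is positive, then possibly negative on one interval, then positive) survives in that case too, but you must run the case analysis rather than assert a single inflection point.

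The fatal gap is the one you flag yourself as obstacle (ii): existence and uniqueness of the threshold $k_c$. Your argument establishes, for each fixed $k$, a dichotomy between shapes (a) and (b), but the theorem's content is that the set of $k$ for which (a) holds is exactly a half-line $[k_c,\infty)$. That requires showing the minimum value $\phi(s_2(k))$ (equivalently, the sign of $f-g$ at the critical point) is monotone in $k$, together with positivity for small $k$ and negativity for large $k$. This is precisely where the paper does its real work: Lemma \ref{lemma:F(t,k)_decreasing} proves $\frac{\partial F}{\partial k}(t_0(k),k)<0$ by explicit differentiation, and this in turn hinges on Lemma \ref{lemma:increasing_W}, a nontrivial monotonicity statement about $W(k)=w(k)/\sqrt{2k}$ extracted from the constraint $\int_1^{1+k}(w^2+2k\ln s)^{-1/2}ds=1$; Lemmas \ref{lemma:positive_small_k} and \ref{lemma:negative_large_k} then supply the sign at the two ends. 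Deferring all of this to ``a finite system that can be solved numerically'' does not prove uniqueness of $k_c$: numerics can locate a root of the tangency system but cannot by themselves exclude a second threshold. Until you supply a monotonicity-in-$k$ argument (your $\phi$-formulation would need an analogue of the paper's $\partial F/\partial k$ computation, again using $w=w(k)$ and the fact that $w/k$ and $w/\sqrt{2k}$ increase in $k$), the central assertion of the theorem remains unproved.
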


In what follows, we start with introducing notation for the proof of Theorem \ref{thm:cases_k}, and give a sketch of the proof. The theorem is proven in Appendix \ref{sec:existence_uniqueness_w}.

Define the auxiliary function $\psi:[1,\infty)\to [0,\infty)$ by
\begin{align}
\psi(t):=2k+\frac{k}{2\ln(t)}-k\ln(2k\ln(t)),\label{eq:psi}
\end{align} and notice (also for the proof in Lemma \ref{lemma:equivalence}) that the function $\psi(t)$ is strictly decreasing from $+\infty$ at $t=1$ to $0$ at $t=\infty$. This follows easily from the definition of $\psi$ in \eqref{eq:psi}. 

Denote the unique solution $t>1$ of the equation $ \psi(t) = w^2$ by $t_0(k)$, i.e.
\begin{align}
\psi(t_0(k)) = w^2,\label{eq:equation_psi}
\end{align} where $w$ comes from the initial condition $f'(0)=w\geq 0$. Additionally, define
\begin{align}
F(t,k) &:= \int_{(W^2+\ln(f(t)))^{\frac{1}{2}}}^{(W^2+\ln(g(t)))^{\frac{1}{2}}} \exp(v^2)dv \label{eq:def_F}\\
& = -t\sqrt{\frac{k}{2}}\exp(W^2)+\int_W^{(W^2+\ln(g(t)))^{\frac{1}{2}}}\exp(v^2)dv,\label{eq:F(t,k)}
\end{align} where the second line is a consequence of Lemma \ref{lemma:alternative_f} with $y=1$. The proof of Theorem \ref{thm:cases_k} centers about the unique solution $t_0(k)$ of \eqref{eq:equation_psi}. First, from \eqref{eq:def_F}, we notice that $\max_{t\geq 1} F(t,k)\leq 0$ is equivalent to $f(t)\geq g(t)$. In Lemma \ref{lemma:F(t,k)}, we show that $\max_{t\geq 1}F(t,k)$ is exactly attained at the point $t_0(k)$, i.e., $\max_{t\geq 1}F(t,k) = F(t_0(k),k)$. Notice that $F(t_0(k),k)$ is only a function of the parameter $k$. In Lemma \ref{lemma:F(t,k)_decreasing}, we show $F(t_0(k),k)$ is a strictly decreasing function of $k$. To prove Lemma \ref{lemma:F(t,k)_decreasing}, we make use of additional Lemma \ref{lemma:increasing_W}. Then, in Lemmas \ref{lemma:positive_small_k} and \ref{lemma:negative_large_k}, we show that $F(t_0(k),k)$ is positive for small $k$ and negative for large $k$, respectively. This allows us to conclude that $F(t_0(k),k)\leq 0$ is equivalent to $k\geq k_c$. In summary, to prove Theorem  \ref{thm:cases_k}, we show
\begin{align*}
f(t)\geq g(t) & \iff \max_{t\geq 1} F(t,k) = F(t_0(k),k) \leq 0 \iff k\geq k_c.
\end{align*} Furthermore, in Lemma \ref{lemma:F(t,k)}, we show that $F(t,k)$ has only one extreme point, and in particular that this extreme point is a maximum and that this is attained at the point $t_0(k)$. Thus, in the case where $0<k<k_c$, we are left with $t_1(k),t_2(k)$ with $1<t_1(k)<t_2(k)<\infty$ such that $f(t)<g(t)$ when $t_1(k)<t<t_2(k)$ and $f(t)>g(t)$ when $1\leq t<t_1(k)$ or $t>t_2(k)$.

Necessary Lemmas \ref{lemma:F(t,k)}--\ref{lemma:negative_large_k} to prove Theorem \ref{thm:cases_k} are stated and proven in Appendix \ref{sec:existence_uniqueness_w}.

A comparison of the approximation $g(t)$, i.e. for \eqref{eq:f(t)_approx}, to the exact solution $f(t)$ of \eqref{eq:voltages_approx} where $w$ is such that $f(1)=1+k$, for three values of $k$, is given in Figure \ref{fig:quotient_f_g_500_log_a}.

\begin{figure}[h]
  \centering
  \includegraphics[scale=0.5]{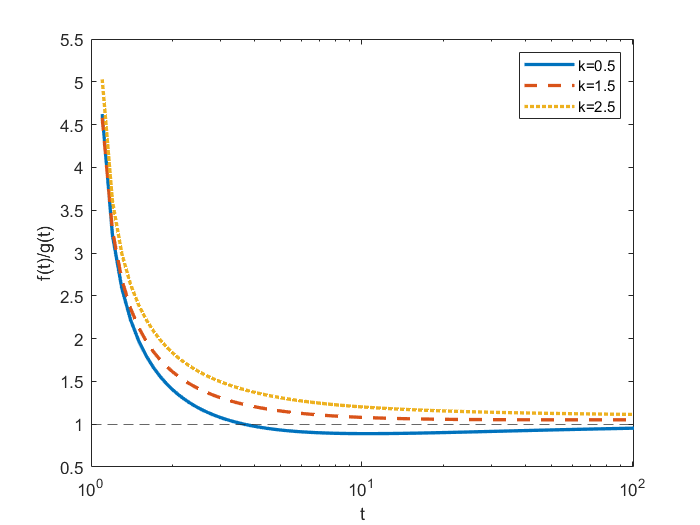}
  \caption{Plot of quotient $f/g$ for three values of $k$.}
  \label{fig:quotient_f_g_500_log_a}
\end{figure}

However, in the setting where $k$ is small, the result in Theorem \ref{thm:cases_k}, case (b) leaves two practical questions; how small the ratio $f(t)/g(t)$ can be when $t_1(k)\leq t\leq t_2(k)$ and how large the ratio $f(t)/g(t)$ can be when $t\geq t_2(k)$. These practical questions are covered in Theorem \ref{thm:bounds_f/g}.

%\subsubsection{Back to motivational application}

\begin{theorem}\label{thm:bounds_f/g}
Let $f(t)$ be given by \eqref{eq:f} with initial conditions $f(0)=1, f'(0)=w$ such that $f(1)=1+k$, and let $g(t)$ be given by \eqref{eq:f(t)_approx}. Then, for $0<k<k_c$, we have
%\begin{enumerate}
%\item In case (a), there is a $K>0$ of order unity and independent of $k\geq k_c$ such that 
%\begin{align*}
%f(t)/g(t)\leq K(\ln(a+bt)/\ln(t))^{\frac{1}{2}},t>1,
%\end{align*} where $a$ and $b$ are as in \eqref{eq:a}--\eqref{eq:b}. In particular, $f(t)/g(t)$ is uniformly bounded in $t\geq 2$ when $k$ ranges between $k_c$ and 2.
%\item In case (b), we have 
\begin{align}
f(t)/g(t) \geq \frac{1}{2}\left(\ln\left(\sqrt{2/k} \right) \right)^{-\frac{1}{2}},\label{eq:lowerbound_f_g}
\end{align} when $t_1(k)\leq t\leq t_2(k)$. Furthermore, %there is an $L>0$ of order unity and independent of $k\in(0,k_c)$ such that $f(t)/g(t)\leq L$ when $t\geq t_2(k)$.
we have 
\begin{align}
f(t)/g(t)\leq 1.21 \label{eq:upperbound_f_g}
\end{align} when $t\geq t_2(k)$.
%\end{enumerate}
\end{theorem}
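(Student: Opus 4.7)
The plan is to study the ratio $h(t) := f(t)/g(t)$ directly. By Theorem \ref{thm:cases_k}(b) together with Corollary \ref{corollary:asymp_f}, for $0 < k < k_c$ one has $h(t_1(k)) = h(t_2(k)) = 1$, $h < 1$ on $(t_1(k), t_2(k))$, $h > 1$ on $(t_2(k), \infty)$, and $h(t) \to 1$ as $t \to \infty$. Consequently, $h$ attains its global minimum over $[t_1(k), t_2(k)]$ at an interior critical point $t_*(k)$, and its global maximum over $[t_2(k), \infty)$ at an interior critical point $t^*(k)$. It therefore suffices to bound $h(t_*(k))$ from below and $h(t^*(k))$ from above.

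The key analytical tool is a tractable characterisation of these critical points. Multiplying the ODE $f''(t) = k/f(t)$ by $f'(t)$ and integrating from $0$ to $t$ against the initial data yields the first integral $(f'(t))^2 = w^2 + 2k \ln f(t)$, while $g'(t)/g(t) = (2 \ln t + 1)/(2 t \ln t)$ is explicit. The stationary condition $f'(\tau)/f(\tau) = g'(\tau)/g(\tau)$ at $\tau \in \{t_*(k), t^*(k)\}$ therefore becomes
\begin{equation}\label{eq:crit_bounds}
\frac{\sqrt{w^2 + 2k \ln f(\tau)}}{f(\tau)} \;=\; \frac{2 \ln \tau + 1}{2 \tau \ln \tau}.
\end{equation}
Combining \eqref{eq:crit_bounds} with the implicit representation \eqref{eq:f_0(x)}--\eqref{eq:Ux} of $f$ and the two-sided integral estimates of Lemma \ref{lemma:ineq_I(y)} reduces the evaluation of $h$ at either critical point to an explicit, albeit transcendental, relation in $\tau$, $k$, and $w$; the dependence of $w$ on $k$ is in turn controlled via Lemma \ref{lemma:existence_uniqueness_w}.

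For the lower bound \eqref{eq:lowerbound_f_g}, the plan is to use the lower half of Lemma \ref{lemma:ineq_I(y)} to bound $f(t_*(k))$ from below in terms of $g(t_*(k))$ via \eqref{eq:crit_bounds}, which after simplification should collapse to the announced estimate $\tfrac{1}{2}(\ln \sqrt{2/k})^{-1/2}$. For the upper bound \eqref{eq:upperbound_f_g} the scheme is parallel but requires a uniform-in-$k$ argument: in the spirit of the monotonicity statement Lemma \ref{lemma:F(t,k)_decreasing}, one first shows that $k \mapsto h(t^*(k))$ is monotone on $(0, k_c)$, so that $\sup_{0<k<k_c} h(t^*(k))$ is attained in the limit $k \downarrow 0$. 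In that limit the boundary data $f(0) = 1$, $f(1) = 1 + k$ force the perturbative expansion $f(t) = 1 + k\, t(t+1)/2 + O(k^2)$ together with $w = k/2 + O(k^2)$, so the leading-order maximum of $h$ can be computed from \eqref{eq:crit_bounds}. The hard part will be pinning down the \emph{exact} numerical constant $1.21$: equation \eqref{eq:crit_bounds} is transcendental and Lemma \ref{lemma:ineq_I(y)} is probably not sharp enough on its own to achieve two-decimal accuracy, so the final step is expected to require either a careful higher-order expansion of $h(t^*(k))$ around $k = 0$ together with a monotonicity check on a compact sub-interval $k \in [\delta, k_c]$, or a computer-assisted verification on a finite grid of $k$-values using the analytic estimates to control the error off the grid.
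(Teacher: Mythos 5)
There are genuine gaps in both halves of your plan. For the lower bound, the critical-point route is never actually executed: you assert that the stationarity relation combined with Lemma \ref{lemma:ineq_I(y)} ``should collapse to'' $\tfrac12(\ln\sqrt{2/k})^{-1/2}$, but there is no reason the value of $f/g$ at the interior minimizer equals that particular expression — the stated bound is not sharp, and in the paper it arises not from any critical-point analysis but from the trivial estimate $f\geq 1$ combined with $g(t)\leq g(\sqrt{2/k})=2(\ln\sqrt{2/k})^{1/2}$ on the regime $t_1(k)\leq t\leq\sqrt{2/k}$, together with a separate argument for $t\geq\sqrt{2/k}$. For the upper bound, your scheme fails more seriously. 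The maximizer $t^*(k)$ lies beyond $t_2(k)$, and $t_2(k)\geq\exp(e^{2-k_c}/2k)$ (this is the content of Lemma \ref{lemma:t2(k)}), so as $k\downarrow 0$ the relevant range of $t$ grows like $\exp(c/k)$. A perturbative expansion $f(t)=1+kt(t+1)/2+O(k^2)$ is valid only for $t$ bounded; at $t\sim\exp(c/k)$ the ``correction'' $kt^2/2$ is astronomically large and the expansion is meaningless precisely where you need it. The claimed monotonicity of $k\mapsto h(t^*(k))$ also does not follow from Lemma \ref{lemma:F(t,k)_decreasing}, which concerns the quantity $F(t_0(k),k)$ (an integral of $\exp(v^2)$ between transformed values of $f$ and $g$ at the point $t_0(k)$), not the ratio $f/g$ at its maximum.

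The missing structural idea is the scaling reduction built into \eqref{eq:f}: writing
\begin{align*}
\frac{f(t;k)}{g(t;k)}=\frac{f_0(a+bt)}{g(a+bt;1)}\cdot\frac{ac+bct}{t\sqrt{k}}\cdot\sqrt{\frac{\ln(a+bt)}{\ln(t)}},
\end{align*}
which isolates all $k$-dependence in the two explicitly controllable correction factors and reduces the problem to bounding the single universal ratio $f_0(x)/g(x;1)$. Even then, the maximum of $f_0/g(\cdot;1)$ occurs near $x\approx 3.8\times 10^5$, where direct numerical evaluation of $f_0$ is impractical; the paper handles this with a fixed-point/comparison argument (Lemma \ref{lemma:maximum_ratio_f0_g}) showing $f_0(x)\leq 1+\alpha_2\,g(x;1)$ for $x\geq x_2$ with $\alpha_2=1.1115$, yielding the rigorous bound $1.12$ on the first factor. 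Without this reduction, your proposed grid-in-$k$ computer verification has no way to control the error off the grid or to handle the unbounded $t$-range, so the constant $1.21$ cannot be certified by the route you describe.
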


The proof exploits properties of Theorem \ref{lemma:solution_f} and Theorem \ref{thm:cases_k}, such as exact representations \eqref{eq:a}--\eqref{eq:c} and actual values such as the one for $k_c$, but most importantly, we use numerical results to compute bounds for the quantity $f_0(x)/g(x)$, where $f_0(x)$ is given in \eqref{eq:f_0(x)} and $g(x)$ is given in \eqref{eq:f(t)_approx}. The proofs of Theorem \ref{thm:bounds_f/g}, and supporting Lemmas \ref{lemma:maximum_ratio_f0_g} and \ref{lemma:t2(k)} can be found in Appendix \ref{sec:existence_uniqueness_w}.

\section{Main results of discrete Emden-Fowler type equation}\label{SEC:DISCRETE_RESULTS}

In this section, we present the asymptotic behavior of the discrete recursion \eqref{eq:voltages_distflow}. Thus, we consider the sequence $V_j, j=0,1,\ldots$ defined in \eqref{eq:voltages_distflow} %by
%\begin{align}
%V_0=1,V_1 = 1+k; V_{j+1}-2V_j+V_{j-1}=\frac{k}{V_j},\quad j=1,2,\ldots,\label{eq:Vn_sequence}
%\end{align} 
and we let
\begin{align}
W_j = j\left(2k\ln(j) \right)^{\frac{1}{2}}=g(j),\quad j=1,2,\ldots,\label{eq:Wn_sequence}
\end{align} denote the discrete equation analog to $g(j)$; cf.\ \eqref{eq:f(t)_approx}, at integer points $j=1,2,\ldots$. The asymptotic behavior of the discrete recursion \eqref{eq:voltages_distflow} is summarized in the following theorem.
\begin{theorem}\label{thm:lim_V_j_W_j}
Let $V_j,j=0,1,\ldots$ and $W_j,j=1,2,\ldots$ be as in \eqref{eq:voltages_distflow} and \eqref{eq:Wn_sequence}, respectively. Then,
\begin{align*}
\lim_{j\to\infty} \frac{V_j}{W_j} = 1.
\end{align*}
\end{theorem}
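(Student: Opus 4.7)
My plan is to mimic the continuous energy argument in the discrete setting. Set $D_j := V_{j+1}-V_j$ so that the recursion becomes $D_j - D_{j-1} = k/V_j$. Since $V_0,V_1 > 0$ and the right-hand side is positive whenever $V_j > 0$, a simple induction yields that every $V_j$ is positive, $D_j$ is strictly increasing, and hence $V_j$ is strictly increasing and convex. From $D_0 = k$, I get the rough lower bound $V_j \ge 1 + jk$, and since $D_j = k + k\sum_{i=1}^{j} 1/V_i \le k + k\sum_{i=1}^{j} 1/(1+ik) = O(\ln j)$, I also obtain $V_j = O(j\ln j)$. These a priori bounds $1+jk \le V_j \le C j\ln j$ will be used everywhere below.

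Next, I aim for a discrete analog of the continuous first integral $(f')^2 = 2k\ln f + w^2$. Computing $D_j^2 - D_{j-1}^2 = (D_j - D_{j-1})(D_j+D_{j-1}) = (k/V_j)(V_{j+1}-V_{j-1})$, I compare $(V_{j+1}-V_{j-1})/V_j$ with $\ln V_{j+1}-\ln V_{j-1}$ by applying the midpoint-rule expansion of $\int_{V_{j-1}}^{V_{j+1}} du/u$ around the midpoint $m_j = (V_{j+1}+V_{j-1})/2 = V_j + k/(2V_j)$. This yields
\begin{align*}
\ln V_{j+1}-\ln V_{j-1} = \frac{V_{j+1}-V_{j-1}}{V_j} + E_j, \qquad E_j = O\!\left(\frac{(V_{j+1}-V_{j-1})^3}{V_j^3} + \frac{k(V_{j+1}-V_{j-1})}{V_j^3}\right).
\end{align*}
Telescoping from $j=1$ to $N$ gives $D_N^2 - D_0^2 = k(\ln V_{N+1} + \ln V_N - \ln V_1 - \ln V_0) - k\sum_{j=1}^{N} E_j$. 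Using $V_{j+1}-V_{j-1} = D_j + D_{j-1} = O(\ln j)$ and $V_j \ge 1+jk$, the error satisfies $E_j = O((\ln j)^3/j^3)$, so $\sum_{j\ge 1} E_j$ converges absolutely. I therefore conclude
\begin{align*}
D_N^2 = 2k \ln V_N + O(1), \qquad N \to \infty.
\end{align*}

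Finally, since $1 + jk \le V_j \le Cj\ln j$ gives $\ln V_j = \ln j + O(\ln\ln j) = (\ln j)(1+o(1))$, the previous display implies $D_j = \sqrt{2k\ln j}\,(1+o(1))$. Summing $V_N = V_0 + \sum_{i=0}^{N-1} D_i$, I invoke a standard Cesaro-type argument (the ratio of terms tends to $1$ and both sums diverge) to get $V_N = (1+o(1))\sum_{i=1}^{N-1}\sqrt{2k\ln i}$. Euler-Maclaurin or direct comparison gives $\sum_{i=1}^{N-1}\sqrt{2k\ln i} \sim \int_1^{N}\sqrt{2k\ln s}\,ds$, and since $\frac{d}{ds}\bigl[s\sqrt{2k\ln s}\bigr] = \sqrt{2k\ln s}\,(1 + 1/(2\ln s))$, this integral is asymptotic to $N\sqrt{2k\ln N} = W_N$. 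Hence $V_N/W_N \to 1$.

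The main technical obstacle is the careful bookkeeping in the second step: one must have a priori bounds on $V_j$ good enough to make $\sum E_j$ summable \emph{before} the leading order asymptotics are known, so the crude polylogarithmic estimate $D_j = O(\ln j)$ is essential to close the argument. A secondary subtlety is the bootstrap step from $D_j^2 \sim 2k\ln V_j$ to $V_j \sim W_j$; the key point that $\ln V_j \sim \ln j$ relies on having both an upper bound $V_j = O(j\ln j)$ and a lower bound of polynomial type, both available from the convexity/monotonicity established at the outset.
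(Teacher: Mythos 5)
Your proposal is correct, and while it shares the paper's central identity, it assembles the conclusion by a genuinely different and arguably more streamlined route. Both arguments hinge on multiplying the second difference $D_j-D_{j-1}=k/V_j$ by $V_{j+1}-V_{j-1}$ and telescoping, together with the approximation $(V_{j+1}-V_{j-1})/V_j=\ln V_{j+1}-\ln V_{j-1}+O((\ln j/j)^3)$ (the paper's Lemma 4.6, which you obtain via the midpoint rule rather than the series expansion of $\ln(1+X_j)-\ln(1-Y_j)$, but with the same a priori bounds $1+jk\le V_j$ and $D_j=O(\ln j)$ making the error summable). The divergence is in what you do with the resulting identity. The paper extracts only the one-sided bound $D_n\ge (C+2k\ln V_n)^{1/2}$, pairs it with a matching upper bound $W_{j+1}-W_j\le(\psi(j+1)+2k\ln W_{j+1})^{1/2}$ for the comparison sequence, proves a dichotomy (eventually $V_j\ge W_j$ for all $j$, or $V_j<W_j$ for all $j\ge n_0(k)$), and then in either case feeds the one-sided inequality back into $V_j=V_n+\sum\sum k/V_l$ and runs Euler--Maclaurin. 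You instead keep the identity two-sided, obtaining $D_N^2=2k\ln V_N+O(1)$ directly, bootstrap with $\ln V_j\sim\ln j$ (which needs both the polynomial lower bound and the $O(j\ln j)$ upper bound, as you note) to get $D_j\sim\sqrt{2k\ln j}$, and sum by a Ces\`aro/Stolz argument plus the elementary asymptotics $\sum_{i\le N}\sqrt{\ln i}\sim N\sqrt{\ln N}$. This avoids the auxiliary function $\psi$, the choice of $n_0(k)$, and the dichotomy lemma entirely; the paper's comparison machinery yields somewhat more structural information (an eventual one-sided ordering of $V_j$ and $W_j$), but for the limit statement itself your shorter path is complete. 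All the steps you flag as delicate --- summability of the error before the leading asymptotics are known, and the passage from $\ln V_j$ to $\ln j$ --- are handled correctly by your a priori bounds.
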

 
%\begin{lemma}\label{lemma:liminf_limsup_V}
%Let $k>0$ be fixed, and let $V_j; j=0,1,\ldots,$ be defined by
%\begin{align}
%V_0=1,V_1 = 1+k; V_{j+1}-2V_j+V_{j-1} = \frac{k}{V_j}, j=1,2,\ldots.
%\end{align} Then, 
%\begin{align}
%\liminf_{j\to\infty} \frac{V_j}{j(2k\ln(j))^{\frac{1}{2}}}\leq 1 \leq \limsup_{j\to\infty} \frac{V_j}{j(2k\ln(j))^{\frac{1}{2}}}.
%\end{align} 
%\end{lemma}

The proof of Theorem \ref{thm:lim_V_j_W_j} relies on the following observations: there always exists a point $n\in\{1,2,\ldots\}$ such that either $V_j\geq W_j$ for all $j\geq n$ or $V_j\leq W_j$ for all $j\geq n$, and the existence of such a point implies in either case the desired asymptotic behavior of the sequence $V_j$.

To show that there exists a point $n\in\{1,2,\ldots\}$ such that either $V_j\geq W_j$ for all $j\geq n$ or $V_j\leq W_j$ for all $j\geq n$, we rely on Lemmas \ref{lemma:upper_bound_W}, \ref{lemma:lower_bound_V} and \ref{lemma:equivalence}. Due to the inequalities in Lemmas \ref{lemma:upper_bound_W} and \ref{lemma:lower_bound_V}, we show
\begin{align}
V_{j+1}-V_j \geq W_{j+1}-W_j,\label{eq:ineq_V_W}
\end{align} for $j\geq n_0(k)$, where $n_0(k)$ is appropriately chosen. Then, Equation \eqref{eq:ineq_V_W} implies that there exists either a point $n\geq n_0(k)$ such that $V_n\geq W_n$ or not. If there exists a point $n\geq n_0(k)$ such that $V_n\geq W_n$, then we show in Lemma \ref{lemma:equivalence} that $V_j\geq W_j$ for all $j\geq n$. If not, we have that $V_j<W_j$ for all $j\geq n_0(k)$. 

Then, we are left to show that the existence of such a point implies the desired asymptotic behavior of $V_j$. This is done in Lemma \ref{lemma:V_j_geq_W_j}.

%Then, in Lemma \ref{lemma:equivalence}, we  the case where there exists a point $n\geq n_0(k)$ such that $V_n\geq W_n$, we know that for all $j\geq n$, $V_j\geq W_j$. In the other case, where there exists no such point, we have that $V_j< W_j$ for all $j\geq n_0(k)$. 

%The proof of Theorem \ref{thm:lim_V_j_W_j} consists of three main steps. In the first step, we derive an upper bound for the finite differences of sequence \eqref{eq:Wn_sequence}, which is summarized in Lemma \ref{lemma:upper_bound_W}. In the second step, we derive a lower bound for the finite differenes of sequence \eqref{eq:voltages_distflow}, which is summarized in Lemma \ref{lemma:lower_bound_V}. \red{steps,logic,basic insight}

%Then, in Section \ref{subsubsecc:V_j}, we derive a lower bound for the finite differences of \eqref{eq:voltages_distflow} that is similar to the right-hand side of \eqref{eq:difference_W_j}. This lower bound is given in Lemma \ref{lemma:lower_bound_V}.

%By an appropriate choice of $n_0(k)$
%\begin{align}
%V_{j+1}-V_j & \geq \left(C+2k\ln(V_j)\right)^{\frac{1}{2}} \\
%& \geq \left(\psi(j+1)+2k\ln(W_{j+1})\right)^{\frac{1}{2}} \geq W_{j+1}-W_j
%\end{align}, when $j\geq n_0(k)$.

%The point $n_0(k)$ plays a crucial role in the comparison of the sequences $V_j$ and $W_j$. 

%The last step combines \eqref{eq:difference_W_j} and \eqref{eq:difference_V_j} to compare the sequences $V_j$ and $W_j$.  In Section \ref{subsubsec:comparison}, we show the comparison of both sequences in Lemma \ref{lemma:equivalence} and how this comparison implies the asymptotic behavior of $V_j$ as $j\to\infty$.

We now give the necessary lemmas to prove Theorem \ref{thm:lim_V_j_W_j}.

\begin{lemma}\label{lemma:upper_bound_W} Let $W_j, j=1,2,\ldots$ be as in \eqref{eq:Wn_sequence}. Then,
\begin{align}
W_{j+1}-W_j \leq (\psi(j+1)+2k\ln(W_{j+1}))^{\frac{1}{2}}.\label{eq:difference_W_j}
\end{align} where $\psi(j)$ for $j=1,\ldots$ is defined in \eqref{eq:psi}.
\end{lemma}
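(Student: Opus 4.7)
The plan is to recognize $W_j = g(j)$ with $g(t) := t\sqrt{2k\ln t}$ as in \eqref{eq:f(t)_approx}, and reduce the stated inequality to a one-line convexity estimate. First I would compute $g'(t)$ by the product rule, obtaining
$$g'(t) = \sqrt{2k\ln t} + \frac{k}{\sqrt{2k\ln t}},$$
and square it to get $g'(t)^{2} = 2k\ln t + 2k + k/(2\ln t)$. Next I would use the identity
$$2k\ln g(t) = 2k\ln t + k\ln(2k\ln t),$$
obtained from taking logarithms of $g(t) = t(2k\ln t)^{1/2}$, to eliminate $2k\ln t$ in the expression for $g'(t)^{2}$. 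Comparing with the definition \eqref{eq:psi} of $\psi$, this yields the key identity
$$g'(t)^{2} = \psi(t) + 2k\ln g(t).$$

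With this in hand, the inequality \eqref{eq:difference_W_j} is equivalent to $g(j+1) - g(j) \leq g'(j+1)$. Since $g(j+1) - g(j) = \int_{j}^{j+1} g'(s)\,ds$, it suffices to show that $g'$ is nondecreasing on $[j, j+1]$, which is a convexity statement about $g$. Differentiating $g'$ once more gives
$$g''(t) = \frac{k}{t\sqrt{2k\ln t}}\left(1 - \frac{1}{2\ln t}\right),$$
which is nonnegative as soon as $\ln t \geq 1/2$, i.e.\ for $t \geq e^{1/2}$. Therefore $g$ is convex on $[j, j+1]$ for every integer $j \geq 2$, and the bound $W_{j+1} - W_{j} \leq g'(j+1) = (\psi(j+1) + 2k\ln W_{j+1})^{1/2}$ follows immediately.

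There is no real obstacle in the argument — the only step worth being careful about is the verification that $g''(t)\geq 0$ for sufficiently large $t$, which amounts to a routine computation. The boundary index $j=1$ lies outside the convexity regime ($W_{1} = 0$ and $g$ is not convex on all of $[1,2]$), but this single edge case is immaterial for the intended use of the lemma in the asymptotic argument leading to Theorem \ref{thm:lim_V_j_W_j}, since the inequality is only needed for $j$ beyond the threshold $n_{0}(k)$ fixed in the proof of that theorem.
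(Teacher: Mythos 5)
Your proposal is correct and follows essentially the same route as the paper: compute $g'(t)$, establish the identity $g'(t)^2=\psi(t)+2k\ln g(t)$ via $2k\ln t = 2k\ln g(t)-k\ln(2k\ln t)$, and bound $W_{j+1}-W_j$ by $g'(j+1)$ using convexity of $g$ for $t\geq e^{1/2}$ (the paper phrases this via the mean value theorem rather than the integral of $g'$, which is an immaterial difference). Your remark about the $j=1$ edge case is a fair observation that applies equally to the paper's own proof and does not affect the use of the lemma.
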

\begin{lemma}\label{lemma:lower_bound_V} Let $V_j,j=0,1,\ldots$ be as in \eqref{eq:voltages_distflow}. Then,
\begin{align}
V_{j+1}-V_j & \geq \left(C+2k\ln(V_j) \right)^{\frac{1}{2}},\label{eq:difference_V_j}
\end{align} for some constant $C$.
\end{lemma}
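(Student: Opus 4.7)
The plan is to mimic the continuous energy identity $(f')^2 = 2k\ln f + C$ (which one obtains by multiplying $f''=k/f$ by $2f'$ and integrating) in a purely discrete setting, with the difference $\Delta_j := V_{j+1}-V_j$ playing the role of $f'$. The recursion \eqref{eq:voltages_distflow} rewrites as $\Delta_j = \Delta_{j-1} + k/V_j$, and the initial conditions give $\Delta_0 = V_1 - V_0 = k$. Note also that $(V_j)$ is strictly increasing (which follows from $\Delta_j = k + k\sum_{i=1}^{j} 1/V_i > 0$), so all $\Delta_j$ are positive and taking square roots at the end will be legitimate.

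The first step is to square the recursion on $\Delta_j$ to obtain the discrete energy increment
\begin{align*}
\Delta_j^2 - \Delta_{j-1}^2 = 2\Delta_{j-1}\cdot \frac{k}{V_j} + \frac{k^2}{V_j^2} \;=\; \frac{2k(V_j - V_{j-1})}{V_j} + \frac{k^2}{V_j^2}.
\end{align*}
The key step is then to bound $(V_j - V_{j-1})/V_j$ from below by a logarithmic increment. The concavity of $\ln$ (equivalently, the fact that the secant slope of $\ln$ on $[V_{j-1},V_j]$ is at least the slope of $\ln$ at the right endpoint) gives
\begin{align*}
\ln(V_j) - \ln(V_{j-1}) \leq \frac{V_j - V_{j-1}}{V_j},
\end{align*}
wait — I want the opposite inequality. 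Reading it again: for a concave increasing $\ln$, the secant slope $(\ln V_j-\ln V_{j-1})/(V_j-V_{j-1})$ lies between $1/V_j$ and $1/V_{j-1}$, so actually $\ln V_j-\ln V_{j-1} \geq (V_j-V_{j-1})/V_j$. That is the direction I need, and it makes $(V_j-V_{j-1})/V_j \leq \ln V_j - \ln V_{j-1}$, so this would give an upper bound on $\Delta_j^2-\Delta_{j-1}^2$, not a lower bound.

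So the real key step is to use concavity in the other form: $\ln V_j - \ln V_{j-1} \leq (V_j-V_{j-1})/V_{j-1}$, which combined with $1/V_{j-1} \geq 1/V_j$ does not directly work either. Instead I would use the tighter tangent-at-$V_j$ inequality, which for a concave function $\ln$ reads $\ln V_{j-1} \leq \ln V_j + (V_{j-1}-V_j)/V_j$, i.e.\ $(V_j-V_{j-1})/V_j \geq \ln V_j - \ln V_{j-1}$. This is the correct direction and is the single analytic ingredient the proof needs. Plugging this in and discarding the non-negative term $k^2/V_j^2$ yields
\begin{align*}
\Delta_j^2 - \Delta_{j-1}^2 \;\geq\; 2k\bigl(\ln V_j - \ln V_{j-1}\bigr).
\end{align*}

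The final step is telescoping from $j=1$ downwards: summing the above from $1$ to $j$ gives $\Delta_j^2 \geq \Delta_0^2 + 2k(\ln V_j - \ln V_0) = k^2 + 2k\ln V_j$, where I used $V_0=1$ and $\Delta_0=k$. Taking the (positive) square root delivers $V_{j+1}-V_j \geq (k^2 + 2k\ln V_j)^{1/2}$, which is the claim with the explicit constant $C = k^2$.

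The main obstacle I expect is simply getting the correct direction of the concavity inequality for $\ln$ on $[V_{j-1},V_j]$, since the naive secant bound goes the wrong way; the fix is to use the tangent line at the right endpoint $V_j$ rather than the secant. Everything else (squaring the first-difference recursion and telescoping) is mechanical, and no finer information about the growth rate of $V_j$ is needed, so the bound is clean and uniform in $j$.
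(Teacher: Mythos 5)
Your key analytic step is backwards, and the argument collapses there. The tangent line of the concave function $\ln$ at $V_j$ lies \emph{above} the graph, so $\ln V_{j-1}\leq \ln V_j+(V_{j-1}-V_j)/V_j$ rearranges to $\ln V_j-\ln V_{j-1}\geq (V_j-V_{j-1})/V_j$ --- exactly the direction you correctly derived in your first pass and recognized as useless, not its reverse. (Equivalently, $\ln V_j-\ln V_{j-1}=\int_{V_{j-1}}^{V_j}\frac{dx}{x}\geq \frac{V_j-V_{j-1}}{V_j}$ since $1/x\geq 1/V_j$ on the interval.) So your inequality $(V_j-V_{j-1})/V_j\geq \ln V_j-\ln V_{j-1}$ is false, the term-by-term bound $\Delta_j^2-\Delta_{j-1}^2\geq 2k(\ln V_j-\ln V_{j-1})$ does not follow, and in fact your clean conclusion with $C=k^2$ already fails at $j=1$ for $k=1$: there $V_1=2$, $V_2=3.5$, so $\Delta_1^2=2.25$, while $k^2+2k\ln V_1=1+2\ln 2\approx 2.386$.

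What survives is your skeleton (squaring the first-difference recursion and telescoping), which is essentially the paper's as well. The repair is to accept that the logarithmic replacement only holds up to an (here unfavorably signed) error, $\frac{V_j-V_{j-1}}{V_j}=\ln V_j-\ln V_{j-1}+\mathcal{O}\bigl((\Delta_{j-1}/V_{j-1})^2\bigr)$, and to show that these errors are absolutely summable; this requires the a priori growth estimates $V_j\geq 1+jk$ and $\Delta_j\leq k+\ln(1+jk)$, hence $\Delta_j/V_j=\mathcal{O}(\ln(j)/j)$ (the paper's Lemma \ref{lemma:properties_Vn}). After telescoping, the accumulated error is bounded and is absorbed into the unspecified constant $C$; the lemma only claims \emph{some} $C$, not $C=k^2$. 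The paper in fact multiplies by the symmetric difference $V_{j+1}-V_{j-1}$ so that the quadratic error terms cancel and the remainder is $\mathcal{O}((\ln(j)/j)^3)$ (Lemma \ref{lemma:connect_to_ln}), though second-order summability would also suffice. Without some such error-control step your proof cannot be salvaged as written.
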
 

\begin{lemma}\label{lemma:equivalence}
Let $V_j,j=0,1,\ldots$ and $W_j,j=1,2,\ldots$ be as in \eqref{eq:voltages_distflow} and \eqref{eq:Wn_sequence}, respectively.  Then, we have the following equivalence.
\begin{enumerate}
\item There is $n\geq n_0(k)$ such that $V_n\geq W_n$. 
\item There is $n\geq n_0(k)$ such that $V_j\geq W_j$ for all $j\geq n$.
\end{enumerate}
\end{lemma}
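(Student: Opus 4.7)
\textbf{Plan for proving Lemma \ref{lemma:equivalence}.} The direction (2) $\Rightarrow$ (1) is immediate: if there is $n \geq n_0(k)$ with $V_j \geq W_j$ for all $j \geq n$, then in particular $V_n \geq W_n$. So the entire work lies in the direction (1) $\Rightarrow$ (2), which I would prove by induction on $j$, using Lemmas \ref{lemma:upper_bound_W} and \ref{lemma:lower_bound_V} as the engine to propagate the crossing $V_n \geq W_n$ forward.

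The induction proceeds as follows. Fix $n \geq n_0(k)$ with $V_n \geq W_n$. Assume $V_j \geq W_j$ for some $j \geq n$ and aim to show $V_{j+1} \geq W_{j+1}$. I would combine the two first-difference estimates: Lemma \ref{lemma:lower_bound_V} together with the induction hypothesis yields
\begin{align*}
V_{j+1} - V_j \;\geq\; \bigl(C + 2k\ln(V_j)\bigr)^{1/2} \;\geq\; \bigl(C + 2k\ln(W_j)\bigr)^{1/2},
\end{align*}
while Lemma \ref{lemma:upper_bound_W} gives
\begin{align*}
W_{j+1} - W_j \;\leq\; \bigl(\psi(j+1) + 2k\ln(W_{j+1})\bigr)^{1/2}.
\end{align*}
Thus to conclude $V_{j+1} - V_j \geq W_{j+1} - W_j$, and hence $V_{j+1} \geq W_{j+1}$, it suffices to verify the single inequality
\begin{align}\label{eq:plan_key_ineq}
C - \psi(j+1) \;\geq\; 2k\bigl[\ln(W_{j+1}) - \ln(W_j)\bigr].
\end{align}

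The key observation is that the right-hand side of \eqref{eq:plan_key_ineq} tends to $0$ as $j \to \infty$: expanding $\ln(W_{j+1}/W_j) = \ln(1+1/j) + \tfrac12 \ln(\ln(j+1)/\ln(j))$ gives a quantity of order $1/j$. On the other hand, by the remarks made after \eqref{eq:psi}, $\psi(j+1)$ is strictly decreasing and $\psi(j+1) \to 0$ as $j \to \infty$. Provided the constant $C$ from Lemma \ref{lemma:lower_bound_V} is positive (or can be arranged to be so by choosing $n_0(k)$ sufficiently large in that lemma), both the left-hand side tends to $C > 0$ and the right-hand side tends to $0$. So \eqref{eq:plan_key_ineq} holds for all $j \geq n_0(k)$ once $n_0(k)$ is chosen large enough, which is precisely the meaning of "appropriately chosen" in the outline preceding the lemma.

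\textbf{Where the difficulty lies.} The main obstacle is the concrete choice of $n_0(k)$: one must verify that the threshold used in Lemma \ref{lemma:lower_bound_V} (to guarantee that $C$ is a genuinely positive constant independent of $j$) is compatible with the threshold needed for \eqref{eq:plan_key_ineq}, and that both bounds are valid from the same index $n_0(k)$ onward. Once this common threshold is fixed, the induction is routine and the equivalence follows. A minor technical point is the monotonicity $\ln(V_j) \geq \ln(W_j)$ used in the first display, which requires $W_j$ and $V_j$ to be positive for $j \geq n_0(k)$; this is automatic since $W_j > 0$ for $j \geq 2$ and the sequence $V_j$ is known to be increasing from \eqref{eq:voltages_distflow}.
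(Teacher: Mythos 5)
Your proposal is correct and takes essentially the same route as the paper: the paper phrases the forward propagation as a contradiction via a maximal index $n_3$ rather than as induction, but the key step is the identical chain combining Lemmas \ref{lemma:lower_bound_V} and \ref{lemma:upper_bound_W}, absorbing the $\mathcal{O}(1/j)$ discrepancy between $\ln(W_{j+1})$ and $\ln(W_j)$, and invoking the monotone decay of $\psi$ together with the choice $\psi(n_0(k))\leq C-1$. The concern you raise about the positivity of $C$ and the compatibility of the thresholds is present in the paper's argument as well, so it does not distinguish your approach from theirs.
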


\begin{lemma}\label{lemma:V_j_geq_W_j}
Let $V_j, j=0,1,\ldots,N$ and $W_j, j=1,2,\ldots,N$ be as in \eqref{eq:voltages_distflow} and \eqref{eq:Wn_sequence}, respectively. There holds the following. In either case that
\begin{enumerate}
\item there is a point $n\in\{1,2,\ldots\}$ such that $V_j\geq W_j$ for all $j\geq n$,% Then, $V_j= W_j(1+\smallO(1)), j\to\infty$.
\end{enumerate} or
\begin{enumerate}[resume]
\item there is a point $n\in\{1,2,\ldots\}$ such that $V_j\leq W_j$ for all $j\geq n$,
\end{enumerate} we have that $V_j= W_j(1+\smallO(1)), j\to\infty$.
\end{lemma}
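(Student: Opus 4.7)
The plan is to derive a first-integral-like identity for the sequence $V_j$, bound it against the corresponding continuous quantities by the integral test, and then use the one-sided hypothesis as the second half of a squeeze.

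Telescoping \eqref{eq:voltages_distflow} from $j=1$ to $j=m$ and using $V_1 - V_0 = k$ gives the discrete first integral
\begin{align*}
V_{m+1} - V_m \;=\; k + k\sum_{i=1}^m \frac{1}{V_i},
\end{align*}
which is the discrete analogue of $f'(t) = w + k\int_0^t du/f(u)$ in the continuous case.

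Consider Case $(1)$, $V_i \geq W_i$ for $i \geq n$. Split the sum at $i = n$; the head $k + k\sum_{i=1}^{n-1} 1/V_i$ is a constant, while on the tail $1/V_i \leq 1/W_i = 1/(i\sqrt{2k\ln i})$. The exact antiderivative $\int dx/(x\sqrt{2k\ln x}) = \sqrt{2\ln x/k}$ combined with the standard monotone integral-to-sum comparison (whose remainder is $O(1)$ since the integrand is decreasing and tends to $0$) yields
\begin{align*}
V_{m+1} - V_m \;\leq\; \sqrt{2k\ln m} + C_1
\end{align*}
for some constant $C_1$ depending only on $n$, $k$ and $V_n$. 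Summing once more from $j = n$ to $m-1$ and invoking integration by parts, $\int_n^m \sqrt{2k\ln x}\,dx = m\sqrt{2k\ln m} + O(m/\sqrt{\ln m})$, delivers
\begin{align*}
V_m \;\leq\; W_m + O(m) + O(m/\sqrt{\ln m}) \;=\; W_m\bigl(1 + \smallO(1)\bigr),
\end{align*}
the last step using $m/W_m = 1/\sqrt{2k\ln m} \to 0$. The hypothesis $V_m \geq W_m$ closes the squeeze and gives $V_m/W_m \to 1$.

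Case $(2)$, $V_i \leq W_i$ for $i \geq n$, is entirely symmetric: the hypothesis reverses the inequality for $1/V_i$, the same antiderivative and integral-comparison estimates give $V_{m+1} - V_m \geq \sqrt{2k\ln m} + C_2$ (with $C_2$ possibly negative), and a second summation yields $V_m \geq W_m\bigl(1 - \smallO(1)\bigr)$, which together with $V_m \leq W_m$ again gives $V_m/W_m \to 1$.

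The main obstacle is simply the error bookkeeping: one must verify that every additive constant, every integral-to-sum remainder, and every integration-by-parts boundary term that appears is $\smallO(W_m)$. This holds because $W_m$ dominates both $m$ (by a factor $\sqrt{2k\ln m}$) and $m/\sqrt{\ln m}$ (by a factor $\sqrt{2k}\,\ln m$), so all the crude linear-in-$m$ or slower error contributions collapse to $\smallO(1)$ after dividing by $W_m$. Beyond this accounting, no nontrivial analytic input is required.
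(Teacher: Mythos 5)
Your proposal is correct and follows essentially the same route as the paper: both arguments rest on the telescoped identity $V_{m+1}-V_m = k+k\sum_{i\le m}1/V_i$, replace $1/V_i$ by $1/W_i$ using the one-sided hypothesis, estimate the resulting single and double sums by integral comparison (the paper uses Euler--Maclaurin where you use the monotone integral test, and it writes the double sum out at once where you sum in two stages), and close with the observation that all $O(m)$ errors are $\smallO(W_m)$ together with the hypothesis as the other half of the squeeze. The differences are purely organizational.
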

The proofs of Lemmas \ref{lemma:upper_bound_W}--\ref{lemma:V_j_geq_W_j} are given in Section \ref{sec:proofs_discrete}. Now, Theorem \ref{thm:lim_V_j_W_j} follows from Lemmas \ref{lemma:upper_bound_W}--\ref{lemma:V_j_geq_W_j}.
\begin{proof}[Proof of Theorem \ref{thm:lim_V_j_W_j}]
Let $V_j,j=0,1,\ldots$ and $W_j,j=1,2,\ldots$ be as in \eqref{eq:voltages_distflow} and \eqref{eq:Wn_sequence}, respectively. 
On the one hand, as a result of Lemma \ref{lemma:lower_bound_V}, the first order differences of the sequence $V_j$ are bounded according to \eqref{eq:difference_V_j}, while on the other hand, as a result of Lemma \ref{lemma:upper_bound_W}, the first order finite differences of $W_j$ are bounded according to \eqref{eq:difference_W_j}. %Now, it remains to compare the sequences $V_j$ and $W_j$.

A minor issue is that \eqref{eq:difference_W_j} involves $\ln(W_{j+1})$, whereas \eqref{eq:difference_V_j} involves $\ln(V_j)$. However, by \eqref{eq:Wn_sequence}, we write
\begin{align}
\ln(W_{j+1})-\ln(W_j) & = \ln\left((j+1)(2k\ln(j+1))^{\frac{1}{2}}\right)-\ln\left(j(2k\ln(j))^{\frac{1}{2}}\right) \nonumber\\
& = \ln\left(1+\frac{1}{j}\right)+\frac{1}{2}\ln\left(\left(\frac{\ln(j+1)}{\ln(j)}\right) \right)\label{eq:difference_logs_W}
\end{align} and notice from increasingness of the function $j\geq 1 \mapsto \ln(j)$ and the inequality $\ln(j+1)-\ln(j)\leq \frac{1}{j}$ that $\frac{\ln(j+1)}{\ln(j)}\leq 1+\frac{1}{j}$ when $j>\exp(1)$. Using this last inequality in \eqref{eq:difference_logs_W}, yields that $\ln(W_{j+1}) = \ln(W_j)+\mathcal{O}(1/j)$. 

Moreover, Equations \eqref{eq:difference_W_j} and \eqref{eq:difference_V_j} imply that there exists a point $n_0(k)$ such that $V_{j+1}-V_j\geq W_{j+1}-W_j$ when $j\geq n_0(k)$. To eliminate the effect of the term $\mathcal{O}(1/j)$ in $\ln(W_{j+1})=\ln(W_j)+\mathcal{O}(1/j)$, we let $n_0(k)$ be such that $\psi(n_0(k))\leq C-1$. 

In any case, we can distinguish between two cases: there exists either a point $n\geq n_0(k)$ such that $V_n\geq W_n$ or not, i.e., 
\begin{enumerate}
\item There is $n\geq n_0(k)$ such that $V_{n}\geq W_{n}$,
\item $V_{j}< W_j$ for all $j\geq n_0(k)$.
\end{enumerate}
By Lemma \ref{lemma:equivalence}, we have, on the one hand, that the existence of a point $n\geq n_0(k)$ such that $V_n\geq W_n$, implies that $V_j\geq W_j$ for all $j\geq n$ and on the other hand, that the non-existence of $n\geq n_0(k)$ such that $V_n\geq W_n$, implies that $V_j< W_j$ for all $j\geq n_0(k)$.

%Due to the equivalence in Lemma \ref{lemma:equivalence}, we are  able to prove the desired asymptotic behavior of the sequence $V_j$. In the case where there exists a point $n\geq n_0(k)$ such that $V_n\geq W_n$, we know that for all $j\geq n$, $V_j\geq W_j$. In the other case, where there exists no such point, we have that $V_j< W_j$ for all $j\geq n_0(k)$. 
This situation exactly fits the framework of Lemma \ref{lemma:V_j_geq_W_j}.
 
%Direct application of the results in Lemmas \ref{lemma:equivalence} and \ref{lemma:V_j_geq_W_j} gives Theorem \ref{thm:lim_V_j_W_j}. 

We consider the two cases above. First, assume that (1) holds. Then by Lemma \ref{lemma:equivalence}, we have $V_j\geq W_j$ for all $j\geq n$. From $V_j\geq W_j$, for all $j\geq n$, %we conclude that 
%\begin{align}
%V_{j+1}-2V_j+V_{j-1} = \frac{k}{V_j} \leq \frac{k}{W_j} = \frac{k}{j(2k\ln(j))^{\frac{1}{2}}}.\label{eq:V_j_lim_1}
%\end{align} Therefore, for $j\geq j_1$, , 
we have that Lemma \ref{lemma:V_j_geq_W_j}, item (1) holds, and so
\begin{align*}
V_j = W_j(1+\smallO(1)),\quad j\to\infty.%\label{eq:V_j_lim}
\end{align*} %From \eqref{eq:V_j_lim} and $V_j\geq W_j$ for all $j\geq j_1$, we conclude that $V_j=W_j(1+\smallO(1))$, when $j\to\infty$. 
Second, assume that (2) holds, so that $V_j< W_j$ for all $j>n_0(k)$. Then, Lemma \ref{lemma:V_j_geq_W_j}, item (2) holds, and so
\begin{align*}
V_j= W_j(1+\smallO(1)),\quad j\to\infty.
\end{align*} Hence, any of the two cases yields
\begin{align*}
\lim_{j\to\infty} \frac{V_j}{W_j} = 1.
\end{align*}
\end{proof}

Although we do not provide associated properties of the asymptotic behavior of $V_j$ as $j\to\infty$ as we did for the asymptotic behavior of $f(x)$ as $x\to\infty$, we compare the behavior of $V_j$ with the discrete counterpart of $g(t)$, i.e. $W_j$, for $j=1,\ldots,100$ in Figure \ref{fig:quotient_V_W_500_log_a}.

\begin{figure}[h]
  \centering
  \includegraphics[scale=0.5]{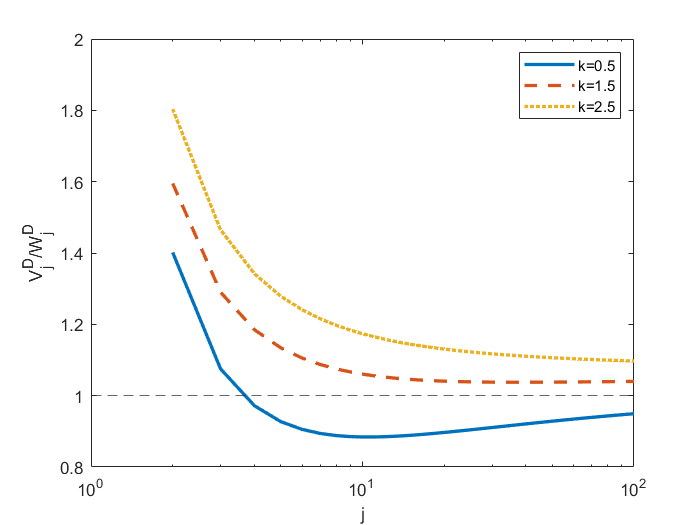}
  \caption{Plot of quotient $V/W$ for three values of $k$.}
  \label{fig:quotient_V_W_500_log_a}
\end{figure}

\section{Proofs for Section \ref{SEC:ASYMP_F(T)}}\label{SEC:PROOFS_CONTINUOUS}
The main result in Section \ref{SEC:ASYMP_F(T)}, i.e., Theorem \ref{THM:LIMITING_BEHAVIOR} follows from Lemmas \ref{lemma:solution_f} and \ref{lemma:ineq_I(y)}. In this section, we provide the proofs of both Theorem \ref{THM:LIMITING_BEHAVIOR} and Lemma \ref{lemma:ineq_I(y)}. For the proof of Lemma \ref{lemma:solution_f} we refer to \cite{Christianen2021}.

\subsection{Proof of Theorem \ref{THM:LIMITING_BEHAVIOR}}\label{subsec:thm_limiting_behavior}

\begin{proof}[Proof of Theorem \ref{THM:LIMITING_BEHAVIOR}]
%\red{reference to Bruijn} 
Denoting $U=U(x)=\left(\ln(f_0(x)\right)^{\frac{1}{2}}$ for $x\geq 0$, we have by \eqref{eq:Ux}
\begin{align}
\int_0^U \exp(u^2)du = \frac{x}{\sqrt{2}}.\label{eq:intY}
\end{align} We consider for $x\geq 0$ the equation
\begin{align}
\frac{\exp(y^2)-1}{2y} = \frac{x}{\sqrt{2}}.\label{eq:B20}
\end{align} With $z=x\sqrt{2}$, we can write \eqref{eq:B20} as
\begin{align*}
y = h_z(y), h_z(y)=\left(\ln(1+zy) \right)^{\frac{1}{2}}.%\label{eq:B21}
\end{align*} The function $h_z(y)$ is concave in $y\geq 0$ since
\begin{align*}
\frac{d}{dy}\left[h_z(y) \right] = \frac{z}{2(1+yz)(\ln(1+yz))^{\frac{1}{2}}}%\label{eq:B22}
\end{align*} is decreasing in $y\geq 0$. Furthermore, when $z>\exp(1)-1$,
\begin{align*}
h_z(1) = \left(\ln(1+z) \right)^{\frac{1}{2}}>1, h_z(z)=(\ln(1+z^2))^{\frac{1}{2}}<z,%\label{eq:B23}
\end{align*} where the first inequality follows from $z>\exp(1)-1$ and the second inequality follows from $\ln(1+z^2)<z^2,z>0$. Therefore, the equation $y=h_z(y)$ has for any $z>\exp(1)-1$ exactly one solution $y_{LB}\in[1,z]$; here ``LB" refers to the lower-bound in \eqref{eq:inequalities_int_exp}. Since $y_{LB}\in[1,z]$, we have
\begin{align}
y_{LB}=(\ln(1+zy_{LB}))^{\frac{1}{2}}\in \left[(\ln(1+z))^{\frac{1}{2}},(\ln(1+z^2))^{\frac{1}{2}} \right],\label{eq:B24}
\end{align} so that $y_{LB}=\mathcal{O}(\ln(z)^{\frac{1}{2}}),z>\exp(1)-1$. When we iterate \eqref{eq:B24} one more time, we get
\begin{align}
y_{LB} & =\left(\ln(z)+\ln\left(\frac{1}{z}+y_{LB}\right) \right)^{\frac{1}{2}}=(\ln(z))^{\frac{1}{2}}\left(1+\frac{\ln\left(\frac{1}{z}+y_{LB}\right)}{\ln(z)} \right)^{\frac{1}{2}} \nonumber \\
& = (\ln(z))^{\frac{1}{2}}\left(1+\mathcal{O}\left(\frac{\ln(\ln(z))}{\ln(z)} \right) \right), \quad z>\exp(1)-1.\label{eq:B25}
\end{align} Observe that
\begin{align}
U = (\ln(f_0(x))^{\frac{1}{2}}\leq y_{LB},\quad z>\exp(1)-1.\label{eq:B26}
\end{align} Indeed, we have from \eqref{eq:intY} and the first inequality in \eqref{eq:inequalities_int_exp}
\begin{align}
\frac{\exp(U^2)-1}{2U}\leq \int_0^U \exp(u^2)du = \frac{x}{\sqrt{2}} = \frac{\exp(y_{LB}^2)-1}{2y_{LB}},\label{eq:B27}
\end{align} and so $U\leq y_{LB}$ follows from increasingness of the function $y\geq 0 \mapsto (\exp(y^2)-1)/2y$. In addition to the upper bound on $y$ in \eqref{eq:B26}, we also have the lower bound 
\begin{align}
U\geq \left(\ln\left(\frac{z}{2}\right) \right)^{\frac{1}{2}},\quad z\geq 2.\label{eq:B28}
\end{align} Indeed, from \eqref{eq:intY} and the second inequality in \eqref{eq:inequalities_int_exp},
\begin{align*}
\frac{\exp(U^2)-1}{U}\geq \int_0^U \exp(u^2)du=\frac{x}{\sqrt{2}}=\frac{z}{2},%\label{eq:B29}
\end{align*} while
\begin{align}
\frac{\exp(y^2)-1}{y}\bigg|_{y=(\ln(\frac{z}{2}))^{\frac{1}{2}}} = \frac{\frac{z}{2}-1}{(\ln(\frac{z}{2}))^{\frac{1}{2}}}\leq \frac{z}{2},\quad z\geq 2,\label{eq:B30}
\end{align} where the inequality in \eqref{eq:B30} follows from $-\ln(w)\geq (1-w)^2$ with $w=\frac{2}{z}\in (0,1]$. We have from \eqref{eq:B27} that 
\begin{align}
\frac{\exp(U^2)-1}{2U}\leq \frac{x}{\sqrt{2}}.\label{eq:B31}
\end{align}  When we use \eqref{eq:B28} in \eqref{eq:inequality_exp} with $y=U$, we see that
\begin{align}
\frac{x}{\sqrt{2}}=\int_0^U \exp(u^2)du & \leq \frac{\exp(U^2)-1}{2U}\left(1+\frac{2}{U^2} \right) \nonumber \\
& = \frac{\exp(U^2)-1}{2U}\left(1+\mathcal{O}\left(\frac{1}{\ln(z)}\right) \right).\label{eq:B32}
\end{align} From \eqref{eq:B31} and \eqref{eq:B32}, we then find that
\begin{align}
\frac{\exp(U^2)-1}{2U} = \frac{x}{\sqrt{2}}\left(1+\mathcal{O}\left(\frac{1}{\ln(z)}\right) \right).\label{eq:B33}
\end{align} Observe that \eqref{eq:B33} coincides with \eqref{eq:B20} when we take $y=U$ and replace the right-hand side $\frac{x}{\sqrt{2}}$ by $\left(\frac{x}{\sqrt{2}}\right)\left(1+\mathcal{O}\left(\frac{1}{\ln(z)} \right) \right)$. Using then \eqref{eq:B25} with $z$ replaced by $z\left(1+\mathcal{O}\left(\frac{1}{\ln(z)} \right) \right)$, we find that
\begin{align}
U & = \left(\ln\left(z\left(1+\mathcal{O}\left(\frac{1}{\ln(z)} \right) \right) \right)\right)^{\frac{1}{2}}\left(1+\mathcal{O}\left(\frac{\ln(\ln(z\left(1+\mathcal{O}\left(\frac{1}{\ln(z)} \right) \right)))}{\ln(z\left(1+\mathcal{O}\left(\frac{1}{\ln(z)} \right) \right))} \right) \right)\nonumber\\
& = (\ln(z))^{\frac{1}{2}}\left(1+\mathcal{O}\left(\frac{\ln(\ln(z))}{\ln(z)} \right) \right).\label{eq:B34}
\end{align} Then, finally, from \eqref{eq:B33} and \eqref{eq:B34},
\begin{align}
f_0(x) & = \exp(U^2) = 1+zU\left(1+\mathcal{O}\left(\frac{1}{\ln(z)} \right) \right)\nonumber\\
& = 1+z(\ln(z))^{\frac{1}{2}}\left(1+\mathcal{O}\left(\frac{\ln(\ln(z))}{\ln(z)} \right) \right)\left(1+\mathcal{O}\left(\frac{1}{\ln(z)} \right) \right)\nonumber \\
& = z(\ln(z))^{\frac{1}{2}}\left(1+\mathcal{O}\left(\frac{\ln(\ln(z))}{\ln(z)} \right) \right) \nonumber,
\end{align} as required.
\end{proof}

\subsection{Proof of Lemma \ref{lemma:ineq_I(y)}}\label{subsec:proof_lemma_ineq_I(y)}

\begin{proof}[Proof of Lemma \ref{lemma:ineq_I(y)}]
We require the inequalities \eqref{eq:inequalities_int_exp} and \eqref{eq:inequality_exp}. The inequalities in \eqref{eq:inequalities_int_exp} follow from expanding the three functions in \eqref{eq:inequalities_int_exp} as a series involving odd powers $y^{2l+1}, l=0,1,\ldots,$ of $y$ and comparing coefficients, i.e.,
\begin{align*}
\frac{\exp(y^2)-1}{2y} & = \sum_{\ell=0}^{\infty} \frac{y^{2\ell+1}}{2(\ell+1)!} \\
& \leq \sum_{\ell=0}^{\infty} \frac{y^{2\ell+1}}{(2\ell+1)\ell!} = \int_0^y \exp(u^2)du \\
& \leq \sum_{\ell=0}^{\infty} \frac{y^{2\ell+1}}{(\ell+1)!} = \frac{\exp(y^2)-1}{y}.
\end{align*} As to the inequality in \eqref{eq:inequality_exp}, we use partial integration according to
\begin{align}
\int_0^y \exp(u^2)du & = \int_0^y \frac{1}{2u}d(\exp(u^2)-1)\nonumber \\
& = \frac{\exp(y^2)-1}{2y}+\int_0^y \frac{\exp(u^2)-1}{2u^2}du.\label{eq:B18}
\end{align} Now
\begin{align}
\int_0^y \frac{\exp(u^2)-1}{2u^2}du \leq \frac{\exp(y^2)-1-y^2}{y^3},\quad y\geq 0, \label{eq:B19}
\end{align} as follows from expanding the two functions in \eqref{eq:B19} as a series involving odd powers $y^{2l+1}, l=0,1,\ldots,$ of $y$ and comparing coefficients. Then \eqref{eq:inequality_exp} follows from \eqref{eq:B18}--\eqref{eq:B19} upon deleting the $y^2$ in the numerator at the right-hand side of \eqref{eq:B19}. %Observe that the three functions in \eqref{eq:inequalities_int_exp} strictly increase from 0 at $y=0$ to $\infty$ at $y=\infty$.
\end{proof}

\section{Proofs for Section \ref{SEC:DISCRETE_RESULTS}}\label{sec:proofs_discrete}
The main result in Section \ref{SEC:DISCRETE_RESULTS} follows from Lemmas \ref{lemma:upper_bound_W}--\ref{lemma:V_j_geq_W_j}. The proof of each Lemma can be found in \ref{subsubsec:W_j}--\ref{subsec:proof_lemma_V_j_geq_W_j}, respectively.
\subsection{Proof of Lemma \ref{lemma:upper_bound_W}}\label{subsubsec:W_j}
%First, we prove the upper bound \eqref{eq:difference_W_j} in Lemma \ref{lemma:upper_bound_W}.
\begin{proof}[Proof of Lemma \ref{lemma:upper_bound_W}] For Equation \eqref{eq:Wn_sequence}, by the mean-value theorem, there is a $\xi\in[j,j+1]$ such that
\begin{align}
W_{j+1}-W_j & = g(j+1)-g(j) = g'(\xi) \leq g'(j+1) .\label{eq:W_j+1-W_j}
\end{align} We have used here that $g(j)$ is convex in $j\geq \exp(1/2)$. We make the term $g'(j+1)$ explicit, by differentiating $g(t)$, see \eqref{eq:f(t)_approx}, with respect to $t$ and rewrite it in terms of the function $g(t)$ itself (so for integer points, in terms of $W_t$) and $\psi(t)$ as in \eqref{eq:psi}. %, where
%\begin{align}
%\psi(t) := 2k+\frac{k}{2\ln(t)}-k\ln(2k\ln(t)),\ t>1.
%\end{align}
Differentiation of $g(t)$ gives,  
\begin{align}
g'(t) & = (2k)^{\frac{1}{2}}\frac{d}{dt}(t(\ln(t))^{\frac{1}{2}})\nonumber \\
& = (2k)^{\frac{1}{2}}\left((\ln(t))^{\frac{1}{2}}+\frac{1}{2(\ln(t))^{\frac{1}{2}}} \right)\nonumber\\
%& = (2k)^{\frac{1}{2}}\left(\frac{2\ln(t)+1}{2(\ln(t))^{\frac{1}{2}}}\right)\nonumber\\
& = (2k)^{\frac{1}{2}}\left(\frac{(2\ln(t)+1)^2}{4\ln(t)} \right)^{\frac{1}{2}}\nonumber\\
%& = (2k)^{\frac{1}{2}}\left(\frac{4\ln(t)^2+4\ln(t)+1}{4\ln(t)} \right)^{\frac{1}{2}}\nonumber\\
%& = (2k)^{\frac{1}{2}}\left(\ln(t)+1+\frac{1}{4\ln(t)}\right)^{\frac{1}{2}}\nonumber\\
& = \left(2k\ln(t)+2k+\frac{k}{2\ln(t)} \right)^{\frac{1}{2}}.\label{eq:diff_g(t)_right_form}
\end{align} However, \eqref{eq:diff_g(t)_right_form} does not contain the function $g(t)$ yet. Therefore, we rewrite the first term of the right-hand side of \eqref{eq:diff_g(t)_right_form} as follows:
\begin{align}
2k\ln(t) & = 2k\ln(t(2k\ln(t))^{\frac{1}{2}})-k\ln(2k\ln(t)) \nonumber\\
& = 2k\ln(g(t))-k\ln(2k\ln(t)).\label{eq:diff_g(t)_step}
\end{align} Then, after inserting \eqref{eq:diff_g(t)_step} and the definition of $\psi(t)$ in \eqref{eq:psi}, we get
\begin{align}
g'(t) & = \left(2k\ln(g(t))-k\ln(2k\ln(t)+2k+\frac{k}{2\ln(t)} \right)^{\frac{1}{2}}\nonumber\\
& = \left(\psi(t)+2k\ln(g(t)) \right)^{\frac{1}{2}},\quad  t>1.\label{eq:diff_g(t)}
\end{align}
Then, combining the upper bound in \eqref{eq:W_j+1-W_j} and \eqref{eq:diff_g(t)}, yields the desired upper bound for the finite differences of $W_j$ in \eqref{eq:difference_W_j}. 
\end{proof}

\subsection{Proof of Lemma \ref{lemma:lower_bound_V}}\label{subsubsecc:V_j} In this section, we prove a lower bound for the first order finite differences of $V_j$ that is similar to the upper bound we obtained in \eqref{eq:difference_W_j}. This result follows from Lemmas \ref{lemma:properties_Vn} and \ref{lemma:connect_to_ln}.

In more detail, the proof of Lemma \ref{lemma:lower_bound_V} consists of algebraic manipulations of \eqref{eq:voltages_distflow}, but the key in the proof is the use of Lemma \ref{lemma:connect_to_ln} in these manipulations, which, in turn, builds on technical results established in Lemma \ref{lemma:properties_Vn}. We first state Lemmas \ref{lemma:properties_Vn} and \ref{lemma:connect_to_ln}.

\begin{lemma}\label{lemma:properties_Vn}
Let $V_j, j=0,1,\ldots,$ be as in \eqref{eq:voltages_distflow}. Then,
\begin{enumerate}
\item $V_j\geq jk+1$,
\item $V_{j+1}-V_j = \sum_{i=0}^j \frac{k}{V_i} \to\infty$ as $j\to\infty$,
\item $V_{j+1}-V_j \leq k+\ln\left(1+jk\right)$,
\item $\frac{V_{j+1}-V_j}{V_j}=\mathcal{O}\left(\frac{\ln(j)}{j} \right)$.
\end{enumerate}
\end{lemma}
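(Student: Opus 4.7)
The plan is to prove the four claims in order, each essentially feeding into the next.

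For \textbf{(1)}, the key observation is that the recursion \eqref{eq:voltages_distflow} can be rewritten as a telescoping identity for first differences: setting $d_j := V_{j+1}-V_j$, the recurrence gives $d_j - d_{j-1} = k/V_j$. Since $V_j > 0$ (which follows by a quick induction from $V_0 = 1$, $V_1 = 1+k > 0$ and the form $V_{j+1} = 2V_j - V_{j-1} + k/V_j$), the sequence $d_j$ is strictly increasing in $j$. Because $d_0 = V_1 - V_0 = k$, we get $d_j \geq k$ for all $j \geq 0$, and summing yields $V_j = 1 + \sum_{i=0}^{j-1} d_i \geq 1 + jk$.

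For \textbf{(2)}, the equality $V_{j+1}-V_j = \sum_{i=0}^{j} k/V_i$ follows by telescoping $d_j - d_{j-1} = k/V_j$ from $i = 1$ to $j$ and using $d_0 = k = k/V_0$. For the divergence, suppose for contradiction that $d_j$ stays bounded; since $d_j$ is monotone increasing, it would converge to some finite limit $L < \infty$. Then $V_j = 1 + \sum_{i=0}^{j-1} d_i = O(j)$, so $\sum_{i=0}^{\infty} k/V_i = \infty$, contradicting $d_j = \sum_{i=0}^{j} k/V_i \leq L$. Hence $d_j \to \infty$.

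For \textbf{(3)}, I plug the bound from (1) into the sum identity from (2):
\begin{align*}
V_{j+1}-V_j \;=\; \sum_{i=0}^{j} \frac{k}{V_i} \;\leq\; k + \sum_{i=1}^{j}\frac{k}{1+ik}.
\end{align*}
Since $x \mapsto k/(1+xk)$ is positive and decreasing on $[0,\infty)$, the remaining sum is bounded above by the integral
$\int_{0}^{j} \frac{k}{1+xk}\,dx = \ln(1+jk)$, yielding $V_{j+1}-V_j \leq k + \ln(1+jk)$.

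Finally, \textbf{(4)} is immediate from combining (1) and (3): the ratio satisfies
\begin{align*}
\frac{V_{j+1}-V_j}{V_j} \;\leq\; \frac{k+\ln(1+jk)}{1+jk} \;=\; \mathcal{O}\!\left(\frac{\ln j}{j}\right), \quad j \to \infty.
\end{align*}
The main obstacle is really only the divergence claim in (2); the rest are mechanical consequences of monotonicity of $d_j$, the telescoping identity, and an integral comparison. The whole proof hinges on the single structural observation that the forward differences $d_j = V_{j+1}-V_j$ increase by exactly $k/V_j$ at each step.
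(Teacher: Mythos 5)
Your proposal is correct and follows essentially the same route as the paper: telescoping the first differences $d_j=V_{j+1}-V_j$ with $d_j-d_{j-1}=k/V_j$ to get (1) and the summation identity in (2), a boundedness contradiction for the divergence, an integral comparison against $k/(1+xk)$ for (3) (the paper shifts the integration limits slightly differently but obtains the same bound $k+\ln(1+jk)$), and combining (1) and (3) for (4). The only cosmetic remark is that your positivity induction in (1) should carry the increasingness $V_j\geq V_{j-1}$ along as part of the induction hypothesis, since $2V_j-V_{j-1}+k/V_j>0$ does not follow from positivity alone; this is exactly what your observation $d_j\geq d_0=k$ supplies.
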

 
\begin{lemma}\label{lemma:connect_to_ln}
Let $V_j,j=0,1,\ldots,N-1$ as in \eqref{eq:voltages_distflow}. Then,
\begin{align*}
\frac{V_{j+1}-V_{j-1}}{V_j} = \ln(V_{j+1})-\ln(V_{j-1})+\mathcal{O}\left(\left(\frac{\ln(j)}{j} \right)^3 \right).
\end{align*}
\end{lemma}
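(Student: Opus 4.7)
The plan is to Taylor-expand the logarithmic difference around the value $V_j$. Write
\[
\ln(V_{j+1}) - \ln(V_{j-1}) = \ln\bigl(1 + b_j/V_j\bigr) - \ln\bigl(1 - a_j/V_j\bigr),
\]
where $a_j := V_j - V_{j-1}$ and $b_j := V_{j+1} - V_j$; both are positive since the sequence $V_j$ is strictly increasing (cf.\ \cite{Christianen2021}). By Lemma \ref{lemma:properties_Vn}(4), both $a_j/V_j$ and $b_j/V_j$ are $\mathcal{O}(\ln(j)/j)$ and hence eventually bounded by any fixed $\delta<1$, so the standard power-series expansion of $\ln(1+x)$ converges absolutely and gives
\[
\ln(V_{j+1}) - \ln(V_{j-1}) = \sum_{n=1}^{\infty} \frac{(-1)^{n+1}}{n\, V_j^n}\Bigl(b_j^n - (-a_j)^n\Bigr).
\]

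The $n=1$ contribution collapses to $(b_j + a_j)/V_j = (V_{j+1} - V_{j-1})/V_j$, which is exactly the left-hand side of the target identity. For $n=2$, I would use the recursion \eqref{eq:voltages_distflow} in the decisive form $b_j - a_j = V_{j+1} - 2V_j + V_{j-1} = k/V_j$, so that $b_j^2 - a_j^2 = (b_j - a_j)(b_j + a_j) = k(V_{j+1} - V_{j-1})/V_j$; combined with Lemma \ref{lemma:properties_Vn}(1) giving $V_j \geq jk+1$ and Lemma \ref{lemma:properties_Vn}(3) giving $V_{j+1} - V_{j-1} = \mathcal{O}(\ln j)$, the $n=2$ term is $\mathcal{O}(\ln(j)/j^3)$, which is absorbed into $\mathcal{O}((\ln(j)/j)^3)$. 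For the tail, bound each summand with $n \geq 3$ by $(2/n)\,\varepsilon_j^n$ with $\varepsilon_j := \max(a_j,b_j)/V_j = \mathcal{O}(\ln(j)/j)$; the resulting geometric-type tail is $\mathcal{O}(\varepsilon_j^3) = \mathcal{O}((\ln(j)/j)^3)$. Rearranging these three estimates yields the claimed identity.

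The main subtlety, and the only place the argument is non-routine, lies in the $n=2$ term: a crude bound using only $a_j, b_j = \mathcal{O}(\ln j)$ together with $V_j^2 = \Omega(j^2)$ would give just $\mathcal{O}((\ln j)^2/j^2)$, which is far too weak. The extra factor of $V_j^{-1}$ that saves the bound is precisely what the algebraic identity $b_j - a_j = k/V_j$ from the Distflow recursion supplies. Thus the proof is not purely an analytic Taylor estimate; it crucially exploits the structural cancellation between consecutive first differences that is built into \eqref{eq:voltages_distflow}.
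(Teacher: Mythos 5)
Your proposal is correct and follows essentially the same route as the paper: the same power-series expansion of $\ln(1+X_j)-\ln(1-Y_j)$ with $X_j=b_j/V_j$, $Y_j=a_j/V_j$, the same exploitation of the recursion in the form $X_j-Y_j=k/V_j^2$ to gain the extra factor of $V_j^{-1}$ in the quadratic term, and the same tail bound $\mathcal{O}(X_j^3)$ combined with Lemma \ref{lemma:properties_Vn}. The ``main subtlety'' you flag is exactly the cancellation the paper's proof uses.
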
 

Both Lemmas \ref{lemma:properties_Vn} and \ref{lemma:connect_to_ln} are proven later in this section. Here, we discuss the efficacy of Lemma \ref{lemma:connect_to_ln} by numerical validation. %Therefore, we first discuss the efficacy of Lemma \ref{lemma:connect_to_ln}. Then we are able to present the proof of Lemma \ref{lemma:lower_bound_V}.\red{logic, basic steps}
%The proof of Lemma \ref{lemma:properties_Vn} combines standard calculations and well-known inequalities, and is given in Section \ref{sec:proofs}.
%For the proof of Lemma \ref{lemma:connect_to_ln} we require some simple properties of the sequence $V_j$. 
%The precise result in Lemma \ref{lemma:connect_to_ln} establishes a relationship between the expression $\frac{V_{j+1}-V_{j-1}}{V_j}$ and an expression involving logarithms, as in \eqref{eq:difference_W_j}. Its proof can be found in Section \ref{sec:proofs}.
%Having available Lemma \ref{lemma:connect_to_ln}, we are ready to prove Lemma \ref{lemma:lower_bound_V}. 
%We present the efficacy of Lemma \ref{lemma:connect_to_ln} by numerical validation. 
We approximate,
\begin{align}
\frac{V_{j+1}-V_{j-1}}{V_j} = \left(\frac{V_{j+1}}{V_j}-1\right)+\left(1-\frac{V_{j-1}}{V_j} \right)\label{eq:approx_diff}
\end{align} by
\begin{align}
\approx & \ln\left(1+\left(\frac{V_{j+1}}{V_j}-1 \right) \right)-\ln\left(1-\left(1-\frac{V_{j-1}}{V_j} \right)\right)\nonumber\\
= & \ln\left(\frac{V_{j+1}}{V_{j}} \right)-\ln\left(\frac{V_{j-1}}{V_{j}} \right) = \ln(V_{j+1})-\ln(V_{j-1}).\label{eq:approx_ln}
\end{align} 
The efficacy of the approximation \eqref{eq:approx_ln} of \eqref{eq:approx_diff} is illustrated for the cases $k=0.001,\ k=0.01$ and $k=0.1$ in Figure \ref{fig:approximation_diff_and_log}. For these cases, the approximation already yields relative errors smaller than $0.5\%$ for $j\geq 10$.
\begin{figure}[h!]
\centering
\includegraphics[scale=0.5]{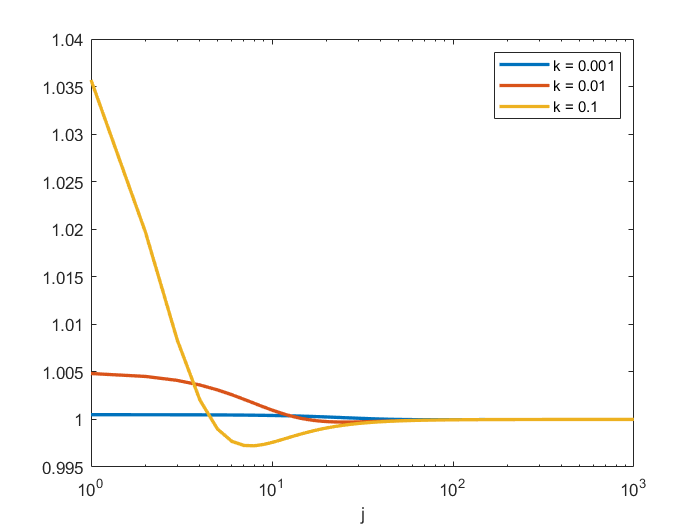}
\caption{Illustration of efficacy of the approximation \eqref{eq:approx_ln} of \eqref{eq:approx_diff} by showing the quotient of \eqref{eq:approx_ln} and \eqref{eq:approx_diff}, for three values of $k$.}
\label{fig:approximation_diff_and_log}
\end{figure}

Having Lemmas \ref{lemma:properties_Vn} and \ref{lemma:connect_to_ln} at our disposal, we are now ready to give the proof of Lemma \ref{lemma:lower_bound_V}.
\begin{proof}[Proof of Lemma \ref{lemma:lower_bound_V}]
In order to relate the recursion in \eqref{eq:voltages_distflow} to \eqref{eq:difference_W_j}, we write \eqref{eq:voltages_distflow} as
\begin{align}
(V_{j+1}-V_j)-(V_j-V_{j-1}) = \frac{k}{V_j},\quad j=1,2,\ldots,N-1\label{eq:Vn1}
\end{align} and multiply both sides of \eqref{eq:Vn1} by
\begin{align*}
V_{j+1}-V_{j-1} = (V_{j+1}-V_j)+(V_j-V_{j-1})
\end{align*} to obtain
\begin{align*}
(V_{j+1}-V_j)^2-(V_j-V_{j-1})^2 = k\frac{V_{j+1}-V_{j-1}}{V_j}.
\end{align*} Summing this over $j=1,2,\ldots,n$, we get
\begin{align}
(V_{n+1}-V_n)^2-(V_1-V_{0})^2 = k\sum_{j=1}^n \frac{V_{j+1}-V_{j-1}}{V_j}.\label{eq:connect_to_ln}
\end{align} 
%Then, we relate the series at the right-hand side of Equation \eqref{eq:connect_to_ln} to an expression involving logarithms (cf.\ \eqref{eq:W_j+1-W_j}) using Proposition \ref{lemma:connect_to_ln}. 
%We present a precise result in Proposition \ref{lemma:connect_to_ln}, 
%This connection is exactly made in Proposition \ref{lemma:connect_to_ln}. However, to show this connection, we first need some simple properties of the sequence $V_j$, which lead to the asymptotic behavior of the series $\frac{V_{j+1}-V_j}{V_j}$ that we need in Proposition \ref{lemma:connect_to_ln}. The proof of Lemma \ref{lemma:properties_Vn} combines standard calculations and well-known inequalities, see Section \ref{subsubsec:asymp_distflow_model}.
%Having available the above properties of the sequence $V_j$, we are now ready to formulate Proposition \ref{lemma:connect_to_ln}. %See Section \ref{subsubsec:asymp_distflow_model} for its proof.
We proceed with rewriting Equation \eqref{eq:connect_to_ln} to an expression that is similar to the one we obtained for the sequence $W_j, j=1,2,\ldots,N$ in Equation \eqref{eq:W_j+1-W_j} using Lemma \ref{lemma:connect_to_ln}. Then, we have
\begin{align}
(V_{n+1}-V_n)^2 & = (V_1-V_{0})^2+k\sum_{j=1}^n \frac{V_{j+1}-V_{j-1}}{V_j} \nonumber\\
& = (V_1-V_{0})^2+k\sum_{j=1}^n \left\{\ln(V_{j+1})-\ln(V_{j-1})+\mathcal{O}\left(\left(\frac{\ln(j)}{j}\right)^3\right)\right\}.\label{eq:rewrite_V_j}
\end{align} We observe a telescoping sum in the right-hand side of \eqref{eq:rewrite_V_j}, so we have
\begin{align*}
\sum_{j=1}^n \left\{\ln(V_{j+1})-\ln(V_{j-1})\right\} = \ln(V_{n+1})+\ln(V_n)-\left(\ln(V_1)+\ln(V_0)\right).
\end{align*} Furthermore, we introduce the following notation:
\begin{align*}
w^2(k) & = (V_1-V_{0})^2-k\left(\ln(V_1)+\ln(V_{0}) \right)\nonumber \\
& = k^2-k\ln(1+k)
\end{align*} and $R_j = \mathcal{O}\left(\left(\frac{\ln(j)}{j}\right)^3\right)$. Thus, we rewrite \eqref{eq:rewrite_V_j} to
\begin{align*}
(V_{n+1}-V_n)^2 & = w^2(k) + k(\ln(V_{n+1})+\ln(V_n))+\sum_{j=1}^n \mathcal{O}\left(\left(\frac{\ln(j)}{j}\right)^3\right) \\
& = w^2(k) + k(\ln(V_{n+1})+\ln(V_n))+\sum_{j=1}^n R_j.
\end{align*}
Recall that we want to derive a lower bound for the first order finite differences $V_{n+1}-V_n$. In order to do so, we use that $V_{n+1}\geq V_n$ (see \cite[Lemma 5.1]{Christianen2021}). Thus,
\begin{align*}
(V_{n+1}-V_n)^2 \geq w^2(k)+2k\ln(V_n)+\sum_{j=1}^n R_j.
\end{align*}
Since $\sum_{j=1}^{\infty}|R_j|<\infty$, we thus see that there is a constant $C$ such that
\begin{align*}
V_{n+1}-V_n \geq \left(C+2k\ln(V_n) \right)^{\frac{1}{2}},
\end{align*} as desired. %and this completes the proof of Lemma \ref{lemma:lower_bound_V}.
\end{proof}
To complete the proof of Lemma \ref{lemma:lower_bound_V}, we are left to prove Lemmas \ref{lemma:properties_Vn} and \ref{lemma:connect_to_ln}. This is done in Sections \ref{subsubsec:proof_lemma_properties_Vn} and \ref{subsubsec:proof_lemma_connect_to_ln}, respectively.

\subsubsection{Proof of Lemma \ref{lemma:properties_Vn}}\label{subsubsec:proof_lemma_properties_Vn}

\begin{proof}[Proof of Lemma \ref{lemma:properties_Vn}]
The properties of the sequence $V_j, j=0,1,\ldots$ are given in the following way.
\begin{enumerate}
\item We have from \eqref{eq:voltages_distflow} for $j=1,2,\ldots$,
\begin{align}
V_{j+1}-V_j = V_j-V_{j-1}+\frac{k}{V_j}\geq V_j-V_{j-1}.\label{eq:Vn_summing}
\end{align} Hence, $V_{j+1}-V_j\geq V_1-V_0=(1+k)-1=k$ for $j=0,1,\ldots$. Then we get for $j=0,1,\ldots$
\begin{align*}
V_{j+1} = V_j+(V_{j+1}-V_j)\geq V_j+k,
\end{align*} and it follows from $V_0=1$ and induction that $V_j\geq 1+jk$ for $j=0,1,\ldots$.
\item We have from the identity in \eqref{eq:Vn_summing} by summation that
\begin{align*}
V_{j+1}-V_j & = (V_1-V_{0}) + \sum_{i=1}^j \frac{k}{V_i}\\
& = \frac{k}{V_0} + \sum_{i=1}^j \frac{k}{V_i}\\
& = \sum_{i=0}^j \frac{k}{V_i}.
\end{align*} When the latter expression would remain bounded by $B<\infty$ as $j\to\infty$, %iteratively apply V_{j+1}-V_j \leq B
we would have $V_{j+1}\leq V_0 +jB, j=0,1,\ldots$. %sum k/V_i \geq k/V_j
However, then $\sum_{i=0}^j \frac{k}{V_i}\geq \sum_{i=0}^{j-1}\frac{k}{V_0+iB}\to\infty$ as $j\to\infty$. Since this contradicts the assumption that the latter expression remains bounded, we must have that $\sum_{i=0}^j \frac{k}{V_i} \to \infty$ as $j\to\infty$.
\item Combining the results of items (1) and (2) gives us the desired result. Indeed,
\begin{align*}
V_{j+1}-V_j = \sum_{i=0}^j \frac{k}{V_i}\leq \sum_{i=0}^j \frac{k}{ik+1},
\end{align*} and
\begin{align*}
\sum_{i=0}^j \frac{k}{ik+1} & = k+\sum_{i=1}^j \frac{1}{i+1/k}\\
& \leq k + \int_{\frac{1}{2}+\frac{1}{k}}^{j+\frac{1}{2}+\frac{1}{k}}\frac{1}{x}dx \\
& = k+\left(\ln\left(j+\frac{1}{2}+\frac{1}{k}\right)-\ln\left(\frac{1}{2}+\frac{1}{k}\right) \right) \\
& = k+\ln\left(1+\frac{j}{\frac{1}{2}+\frac{1}{k}}\right)\\
& \leq k+\ln\left(1+jk\right).
\end{align*} %Here we used that $\frac{k}{nk+1}\leq k$ for all $k>0, j\geq 0$ and an alternative expression for the well-known inequality $\ln(u)\leq u-1$ for $u>0$. This follows by two simple substitutions. First substitute $u=\frac{1}{t}$, then we get $\frac{t-1}{t}\leq \ln(t)$. Then, substitute $t=x+1$ to obtain the inequality $\frac{x}{x+1}\leq \ln(x+1)$. 
\item This is a direct consequence of the inequalities in items (1) and (3). Combining (1) and (3) gives,
\begin{align*}
\frac{V_{j+1}-V_j}{V_j} \leq \frac{k+\ln(1+jk)}{1+jk}.
\end{align*} Hence, $\frac{V_{j+1}-V_j}{V_j} = \mathcal{O}\left(\frac{\ln(j)}{j} \right)$.
\end{enumerate}
\end{proof}

\subsubsection{Proof of Lemma \ref{lemma:connect_to_ln}}\label{subsubsec:proof_lemma_connect_to_ln}

\begin{proof}[Proof of Lemma \ref{lemma:connect_to_ln}]
We show the asymptotic behavior of $\frac{V_{j+1}-V_j}{V_j}$ as $j\to\infty$. Let, for $j=1,2,\ldots$,
\begin{align*}
X_j & = \frac{V_{j+1}}{V_j}-1 = \frac{V_{j+1}-V_j}{V_j}, \\
Y_j & = 1-\frac{V_{j-1}}{V_j} = \frac{V_j-V_{j-1}}{V_j}.
\end{align*} Then,
\begin{align}
0<X_j<1, 0<Y_j<1.\label{eq:bounds_X_Y}
\end{align} 
Indeed, from Lemma \ref{lemma:properties_Vn}, items 1 and 3,
\begin{align*}
X_j = \frac{V_{j+1}-V_j}{V_j}\leq 
\begin{cases}
& 1-\frac{1}{(k+1)^2}<1, \quad j=1, \\
& \frac{k}{1+jk}+\frac{\ln(1+jk)}{1+jk}\leq \frac{k}{1+jk} + \frac{1}{\exp(1)} \leq \frac{1}{2}+\frac{1}{\exp(1)}<1,\quad j=2,3,\ldots,N-1. 
\end{cases}
\end{align*} Here it has been used that the function $y^{-1}\ln(y), y\geq 1,$ has a global maximum at $y=\exp(1)$ that equals $\exp(-1)$. The other inequalities follow by the increasingness of the sequence $V_j,j=0,1,\ldots$ (see \cite[Lemma 5.1]{Christianen2021}). Furthermore, we have
\begin{align*}
X_j+Y_j & = \frac{V_{j+1}-V_{j-1}}{V_j},\\
X_j-Y_j & = \frac{V_{j+1}-2V_j+V_{j-1}}{V_j} = \frac{k}{(V_j)^2}>0.
\end{align*} Therefore,
\begin{align}
\ln(V_{j+1})-\ln(V_{j-1}) & = \ln\left(\frac{V_{j+1}}{V_j}\right)-\ln\left(\frac{V_{j-1}}{V_j}\right) \nonumber\\
& = \ln(1+X_j)-\ln(1-Y_j)\nonumber\\
& = \left(X_j-\frac{X_j^2}{2}+\frac{X_j^3}{3}-\ldots \right) - \left( -Y_j-\frac{Y_j^2}{2}-\frac{Y_j^3}{3}-\ldots \right) \nonumber\\
& = (X_j+Y_j)-\frac{1}{2}(X_j^2-Y_j^2)+\sum_{i=2}^{\infty} \frac{1}{i+1}\left((-1)^i X_j^{i+1}+Y_j^{i+1} \right)\nonumber\\
& = \frac{V_{j+1}-V_{j-1}}{V_j}-\frac{k(V_{j+1}-V_{j-1})}{2(V_j)^3}+\sum_{i=2}^{\infty} \frac{1}{i+1}\left((-1)^i X_j^{i+1}+Y_j^{i+1} \right)\label{eq:ln_differences}
\end{align} where the bounds in \eqref{eq:bounds_X_Y} assure convergence of the infinite series. Since $0<Y_j<X_j$, we have
\begin{align*}
\sum_{i=2}^{\infty} \frac{1}{i+1}\left|(-1)^iX_j^{i+1}+Y_j^{i+1}\right| &\leq \sum_{i=2}^{\infty} \frac{2}{i+1}X_j^{i+1}\\
%& = \sum_{i=2}^{\infty} \frac{2}{i+1}X_j^{i-2}X_j^3\\
& = X_j^3 \sum_{i=2}^{\infty} \frac{2}{i+1}X_j^{i-2}\\
& = \mathcal{O}\left(X_j^3 \right) = \mathcal{O}\left(\left(\frac{V_{j+1}-V_j}{V_j}\right)^3 \right).
\end{align*} Thus, we get that,
\begin{align*}
\frac{V_{j+1}-V_{j-1}}{V_j}& =\ln(V_{j+1})-\ln(V_{j-1})+\mathcal{O}\left(\frac{k(V_{j+1}-V_{j-1})}{2{V_j}^3} + \left(\frac{V_{j+1}-V_{j}}{V_j} \right)^3 \right)\\
& = \ln(V_{j+1})-\ln(V_{j-1})+\mathcal{O}\left(\left(\frac{\ln(j)}{j} \right)^3\right).
\end{align*} In the last line, we used Lemma \ref{lemma:properties_Vn}, item (4).
\end{proof}

%\red{$w_M^2(k)\to\infty$ if $M\to\infty$?}

%\begin{align}
%W_{j+1}-W_j & \leq \left(\psi(j+1)+2k\ln(W_{j+1}) \right)^{\frac{1}{2}}\label{eq:der_W}\\
%V_{j+1}-V_j & \geq \left(C+2k\ln(V_j) \right)^{\frac{1}{2}}\label{eq:der_V}
%\end{align} 

%\subsubsection{Comparison of $V_j$ and $W_j$}\label{subsubsec:comparison}

%Naturally, we want to combine the observation of before; the existence of $n_0(k)$ (such that $\psi(n_0(k))\leq C-1$) such that $V_{j+1}-V_j\geq W_{j+1}-W_j$ for all $j\geq n_0(k)$ and Proposition \ref{lemma:V_j_geq_W_j}.  
%If there exists a point $j_1\geq n_0(k)$ such that $V_{j_1}\geq W_{j_1}$, we would like to have that $V_j\geq W_j$ for all $j\geq j_1$. If this is true, then we are in a similar position as case (A) of Proposition \ref{prop:f(t)_geq_phi(t)}. Otherwise, if there does not exists such a point $j_1\geq n_0(k)$ such that $V_{j_1}\geq W_{j_1}$, we have $V_j\leq W_j$ for all $j\geq n_0(k)$. %The equivalence of having a point $n\geq n_0(k)$ such that $V_{n}\geq W_{n}$ and the fact $V_j\geq W_j$ for all $j\geq n$ is given in Lemma \ref{lemma:equivalence}.

\subsection{Proof of Lemma \ref{lemma:equivalence}}\label{subsec:proof_lemma_equivalence}
\begin{proof}[Proof of Lemma \ref{lemma:equivalence}]
We establish the (non-trivial) implication from (1) to (2). Assume there is $n\geq n_0(k)$ such that $V_n\geq W_n$. We claim that $V_j\geq W_j$ for all $j\geq n$. Indeed, when there is a $n_2>n$ such that $V_{n_2}<W_{n_2}$, we let $n_3:=\max\{j:n\leq j\leq n_2,V_j\geq W_j \}$. Then $V_{n_3}\geq W_{n_3}$ and $V_j<W_j$ for $n_3< j\leq n_2$. However, since $\psi(j)$ is strictly decreasing and $n_3+1>n_0(k)$, we have
\begin{align*}
V_{n_3+1}-V_{n_3} & \geq \left(C+2k\ln(V_{n_3}) \right)^{\frac{1}{2}}\\
& \geq \left(C-1+2k\ln(V_{n_3}) \right)^{\frac{1}{2}}\\
& \geq \left(\psi(n_0(k))+2k\ln(W_{n_3+1}) \right)^{\frac{1}{2}}\\
& \geq \left(\psi(n_3+1)+2k\ln(W_{n_3+1}) \right)^{\frac{1}{2}}\\
& \geq W_{n_3+1}-W_{n_3},
\end{align*} which implies $V_{n_3+1}\geq W_{n_3+1}$. This contradicts the definition of $n_3$. Since the choice of $n_2$ is arbitrary, we have that $V_j\geq W_j$ for all $j\geq n$. The implication from (2) to (1) is immediate.
\end{proof}

\subsection{Proof of Lemma \ref{lemma:V_j_geq_W_j}}\label{subsec:proof_lemma_V_j_geq_W_j}
\begin{proof}[Proof of Lemma \ref{lemma:V_j_geq_W_j}]
Let $n=2,3,\ldots$ and $j\geq n$. Then,
\begin{align}
V_j & = V_n + \sum_{i=n}^{j-1} (V_{i+1}-V_i)\nonumber\\
& = V_n + \sum_{i=n}^{j-1} \left(\sum_{l=n}^i \left[(V_{l+1}-V_l)-(V_l-V_{l-1})\right]+(V_n-V_{n-1}) \right)\nonumber\\
& = V_n + (j-n)(V_n-V_{n-1})+\sum_{i=n}^{j-1}\left(\sum_{l=n}^{i}(V_{l+1}-2V_l+V_{l-1}) \right) \nonumber\\
& = V_n + (j-n)(V_n-V_{n-1})+\sum_{i=n}^{j-1}\left(\sum_{l=n}^{i}\frac{k}{V_l} \right).\label{eq:V_j_expression}
\end{align} Now suppose that there is a $n=2,3,\ldots$ such that
\begin{align}
V_j \geq W_j,\quad  \text{for all}\  j\geq n.\label{eq:V_j_boundary}
\end{align} Then,
\begin{align}
\frac{1}{V_l}\leq \frac{1}{W_l} = \frac{1}{l(2k\ln(l))^{\frac{1}{2}}},\quad l=n,n+1,\ldots,\label{eq:W_l}
\end{align} and so by \eqref{eq:V_j_expression} for all $j\geq n$,
\begin{align}
V_j \leq V_n+(j-n)(V_n-V_{n-1})+\frac{k}{(2k)^{\frac{1}{2}}}\sum_{i=n}^{j-1}\left(\sum_{l=n}^{i} \frac{1}{l(\ln(l))^{\frac{1}{2}}} \right).\label{eq:V_j_expression_1}
\end{align} We use the Euler-Maclaurin formula in its simplest form: for $h\in C^2[n,\infty)$, we have
\begin{multline*}
\sum_{l=n}^i h(l) = \int_n^i h(x)dx + \frac{1}{2}(h(i)+h(n)) + \frac{1}{12}(h'(i)-h'(n))-\int_n^i h''(x)\frac{B_2(x-\floor*{x})}{2}dx,
\end{multline*} where $B_2(t) = (t-\frac{1}{2})^2-\frac{1}{12}$ is the Bernoulli polynomial of degree 2 that satisfies $|B_2(t)|\leq \frac{1}{6}, 0\leq t\leq 1$. Using this with $h(x) = \frac{1}{x(\ln(x))^{\frac{1}{2}}}, x\geq 2$, so that
\begin{align*}
h'(x) & = -\frac{1}{x^2(\ln(x))^{\frac{1}{2}}}-\frac{1}{2x^2(\ln(x))^{\frac{3}{2}}},\\
h''(x) & = \frac{2}{x^3(\ln(x))^{\frac{1}{2}}}+\frac{\frac{3}{2}}{x^3(\ln(x))^{\frac{3}{2}}}+\frac{\frac{3}{4}}{x^3(\ln(x))^{\frac{5}{2}}},\\
\int_n^i h(x) dx & = \int_n^i \frac{1}{x(\ln(x))^{\frac{1}{2}}}dx = 2(\ln(i))^{\frac{1}{2}}-2(\ln(n))^{\frac{1}{2}},
\end{align*} we get
\begin{align}
\sum_{l=n}^{i} \frac{1}{l(\ln(l))^{\frac{1}{2}}} = 2(\ln(i))^{\frac{1}{2}}-2(\ln(n))^{\frac{1}{2}}+\mathcal{O}\left(\frac{1}{n(\ln(n))^{\frac{1}{2}}} \right).\label{eq:euler_maclaurin_1}
\end{align} Next, from the Euler-Maclaurin formula with $h(x)=(\ln(x))^{\frac{1}{2}}$, we have
\begin{align}
\sum_{i=n}^{j-1}(\ln(i))^{\frac{1}{2}} = \int_n^{j-1}(\ln(x))^{\frac{1}{2}}dx + \mathcal{O}\left((\ln(j))^{\frac{1}{2}} \right),\label{eq:euler_maclaurin_2}
\end{align} and obviously
\begin{align}
\sum_{i=n}^{j-1}\left((\ln(n))^{\frac{1}{2}}+\mathcal{O}\left(\frac{1}{n(\ln(n))^{\frac{1}{2}}} \right) \right) = (j-n)\left((\ln(n))^{\frac{1}{2}}+\mathcal{O}\left(\frac{1}{n(\ln(n))^{\frac{1}{2}}} \right) \right).\label{eq:V_n_15}
\end{align} Thus, from \eqref{eq:euler_maclaurin_1} and \eqref{eq:euler_maclaurin_2}, we can write the right-hand side of \eqref{eq:V_j_expression_1} as
\begin{multline*}
 V_n+(j-n)(V_n-V_{n-1})+\\
\frac{k}{(2k)^{\frac{1}{2}}}\left(2\int_n^{j-1}(\ln(x))^{\frac{1}{2}}dx + \mathcal{O}\left((\ln(j))^{\frac{1}{2}} \right)+(j-n)\left((\ln(n))^{\frac{1}{2}}+\mathcal{O}\left(\frac{1}{n(\ln(n))^{\frac{1}{2}}}\right) \right) \right),\end{multline*} which simplifies to
\begin{align}
(2k)^{\frac{1}{2}}\int_n^{j-1}(\ln(x))^{\frac{1}{2}}dx + \mathcal{O}(j).\label{eq:V_n_16}
\end{align} Next, we use the substitution $u := (\ln(x))^{\frac{1}{2}}$ and partial integration, to obtain
\begin{align*}
\int_n^{j-1} (\ln(x))^{\frac{1}{2}}dx & = \int_{(\ln(n))^{\frac{1}{2}}}^{(\ln(j-1))^{\frac{1}{2}}} u\cdot 2u\exp(u^2)du \\
& = \left[u\exp(u^2)\right]_{{(\ln(n))^{\frac{1}{2}}}}^{{(\ln(j-1))^{\frac{1}{2}}}}-\int_{{(\ln(n))^{\frac{1}{2}}}}^{{(\ln(j-1))^{\frac{1}{2}}}} \exp(u^2)du \\
& = (j-1)(\ln(j-1))^{\frac{1}{2}}-n(\ln(n))^{\frac{1}{2}}-\int_{(\ln(n))^{\frac{1}{2}}}^{(\ln(j-1))^{\frac{1}{2}}} \exp(u^2)du.
\end{align*}
Using the second elementary inequality \eqref{eq:inequalities_int_exp} in Lemma \ref{lemma:ineq_I(y)}, we conclude that
\begin{align*}
\int_n^{j-1} (\ln(u))^{\frac{1}{2}}du = j(\ln(j))^{\frac{1}{2}}+\mathcal{O}\left(\frac{j}{(\ln(j))^{\frac{1}{2}}}\right),\quad j\to\infty.
\end{align*}
It thus follows from \eqref{eq:V_j_expression_1}, \eqref{eq:V_n_15} and \eqref{eq:V_n_16} that
\begin{align}
V_j \leq (2k)^{\frac{1}{2}}j(\ln(j))^{\frac{1}{2}}+\mathcal{O}(j).\label{eq:V_j_inequality_order_j} %Q(j)=j(2k\ln(j))^{\frac{1}{2}}+Q(j)=W_j+Q(j).
\end{align} Hence, from \eqref{eq:W_l} and \eqref{eq:V_j_inequality_order_j},
\begin{align*}
V_j %& \leq W_j + Q(j) \\
& = W_j\left(1+\mathcal{O}\left(\frac{1}{(2k\ln(j))^{\frac{1}{2}}}\right) \right)\\
& = W_j\left(1+\smallO(1)\right),\quad j\to\infty.
\end{align*} In a similar fashion, if there is an $n=2,3,\ldots$ such that
\begin{align*}
V_j\leq j(2k\ln(j))^{\frac{1}{2}},\quad \text{for all}\ j\geq n,%\label{eq:V_j_boundary_D}
\end{align*} then
\begin{align*}
V_j &\geq j(2k\ln(j))^{\frac{1}{2}}+\mathcal{O}(j),
\end{align*} which also yields
\begin{align*}
V_j &= W_j(1+\smallO(1)), \quad j\to\infty.
\end{align*}
\end{proof}

\section{Conclusion}\label{sec:conclusion}
%Restate the problem
Continuous and discrete Emden-Fowler type equations appear in many fields such as mathematical physics, astrophysics and chemistry, but also in electrical engineering, and more specifically under a popular power flow model. The specific Emden-Fowler equation we study, appears as a discrete recursion that governs the voltages on a line network and as a continuous approximation of these voltages.
%Sum up the paper
We show that the asymptotic behavior of the solution of the continuous Emden-Fowler equation \eqref{eq:voltages_approx}, i.e. the approximation of the discrete recursion, and the asymptotic behavior of the solution of its discrete counterpart \eqref{eq:voltages_distflow}, are the same.
%Discuss the implications

\section*{Acknowledgments}
This research is supported by the Dutch Research Council through the TOP programme under contract number 613.001.801.

\begin{appendices}
\section{Proofs for Section \ref{SUBSEC:ASSOCIATED_PROPERTIES}}\label{sec:existence_uniqueness_w}

\subsection{Proof of Lemma \ref{lemma:alternative_f}}
\begin{lemma}\label{lemma:alternative_f}
Let $f(t)$ be given by \eqref{eq:voltages_approx}. Then, we can alternatively write $f(t)$ by
\begin{align}
\frac{f'(t)}{(w^2+2k\ln(f(t)/y))^{\frac{1}{2}}} = 1,\quad t\geq 0,\label{eq:first_alternative_f}
\end{align} and
\begin{align}
\int_{(W^2-\ln(y))^{\frac{1}{2}}}^{(W^2+\ln(f(t))/y))^{\frac{1}{2}}}\exp(v^2)dv = \frac{t}{y}\sqrt{\frac{1}{2}k}\exp(W^2),\quad t\geq 0,\label{eq:second_alternative_f}
\end{align} where $W^2 := \frac{w^2}{2k}$.
\end{lemma}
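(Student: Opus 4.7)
The plan is to derive both identities from the ODE $f''(t)=k/f(t)$ with $f(0)=y$, $f'(0)=w$ by the standard energy/separation-of-variables trick.

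First I would multiply the ODE by the integrating factor $f'(t)$, turning it into the exact identity
\[
\frac{d}{dt}\Bigl(\tfrac12 (f'(t))^2\Bigr) = k\,\frac{d}{dt}\bigl(\ln f(t)\bigr).
\]
Integrating from $0$ to $t$ and using the initial data $f(0)=y$, $f'(0)=w$ produces $(f'(t))^2 = w^2 + 2k\ln(f(t)/y)$. To extract a square root with the correct sign, I would observe that $f''(t)=k/f(t)>0$, so $f'$ is strictly increasing, and hence $f'(t)\ge f'(0)=w\ge 0$ for all $t\ge 0$. Taking the positive square root and dividing gives \eqref{eq:first_alternative_f}.

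Next I would read \eqref{eq:first_alternative_f} as a separated equation,
\[
\frac{df}{\bigl(w^2+2k\ln(f/y)\bigr)^{1/2}}=dt,
\]
and make the substitution $v^2 = W^2 + \ln(f/y)$, where $W^2=w^2/(2k)$. Then $w^2+2k\ln(f/y)=2k v^2$, so the denominator becomes $v\sqrt{2k}$, while differentiating the substitution gives $df/f = 2v\,dv$, i.e.\ $df = 2v\,y\exp(v^2-W^2)\,dv$. Substituting yields
\[
dt = y\sqrt{2/k}\,\exp(-W^2)\,\exp(v^2)\,dv.
\]

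Finally I would integrate this identity from $0$ to $t$. At $t=0$ we have $f=y$, hence $v=W$; at time $t$ the upper endpoint is $v=(W^2+\ln(f(t)/y))^{1/2}$. Moving the constants $y\sqrt{2/k}\exp(-W^2)$ to the right-hand side delivers exactly \eqref{eq:second_alternative_f} (with the lower endpoint read as $W=(W^2)^{1/2}$, which matches the $y=1$ instance used in \eqref{eq:F(t,k)}). There is no real obstacle here beyond bookkeeping on the signs and endpoints; the only thing to watch is justifying that $f'\ge 0$ so that the square root can be taken unambiguously, which as noted follows immediately from $f''>0$ and $w\ge 0$.
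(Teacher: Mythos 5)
Your proof is correct and follows essentially the same route as the paper's: multiply the equation by $f'$ and integrate to obtain $(f'(t))^2 = w^2 + 2k\ln(f(t)/y)$, take the positive square root (your justification via $f''>0$, hence $f'\geq w\geq 0$, is a point the paper leaves implicit), then separate variables and substitute $v^2 = W^2+\ln(f/y)$. The only divergence is the lower endpoint of the $v$-integral: your $v=W$, coming from $s=f(0)=y$, is the correct one for general $y$, whereas the stated $(W^2-\ln(y))^{\frac{1}{2}}$ corresponds to starting the $s$-integral at $1$ rather than at $y$ (as the paper's own proof does); the two coincide exactly in the case $y=1$ that is used everywhere downstream, so nothing is affected.
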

\begin{proof}
From \eqref{eq:voltages_approx}, we get
\begin{align}
f'(u)f''(u) = kf'(u)/f(u),\quad 0\leq u\leq t.\label{eq:f4}
\end{align} Integrating Equation \eqref{eq:f4} over $u$ from 0 to $t$ using $f(0)=y, f'(0)=w$ we get
\begin{align*}
\int_0^t f'(u)f''(u)du = \frac{1}{2}(f'(t))^2 - \frac{1}{2}w^2 = \int_0^t \frac{kf'(u)}{f(u)} = k\ln(f(t)/y). 
\end{align*} Hence, for $t>0$,
\begin{align*}
\frac{f'(t)}{\left(w^2+2k\ln(f(t)/y)\right)^{\frac{1}{2}}} = 1,
\end{align*} as desired. Integrating $f'(u)/(w^2+2k\ln(f(t)/y))^{\frac{1}{2}})=1$ from $u=0$ to $u=t$, while substituting $s=f(u)\in [1,f(t)]$, we get
\begin{align}
\int_0^t \frac{\frac{df}{ds}\frac{ds}{du}}{(w^2+2k\ln(f(u)/y))^{\frac{1}{2}}} du & = \int_1^{f(t)} \frac{1}{(w^2+2k\ln(s/y))^{\frac{1}{2}}}ds = t.\label{eq:alternative_rep_f}
\end{align} By introduction of $W^2 = \frac{w^2}{2k}$, the expression becomes
\begin{align}
\frac{1}{\sqrt{2k}}\int_1^{f(t)} \frac{1}{(W^2+\ln(s/y))^{\frac{1}{2}}}ds = t.\label{eq:integral_f}
\end{align} Substituting $v=(W^2+\ln(s/y))^{\frac{1}{2}}, s=y\exp(v^2-W^2),ds=2s(W^2+\ln(s/y))^{\frac{1}{2}}dv$ in the integral \eqref{eq:integral_f}, we get
\begin{align*}
\int_{(W^2-\ln(y))^{\frac{1}{2}}}^{(W^2+\ln(f(t)/y))^{\frac{1}{2}})}\exp(v^2)dv = \frac{1}{2}\exp(W^2)\sqrt{2k}\frac{t}{y} = \frac{t}{y}\sqrt{\frac{1}{2}k}\exp(W^2),\quad t\geq 0,
\end{align*} as desired. This concludes the proof.
\end{proof}

\subsection{Proof of Lemma \ref{lemma:existence_uniqueness_w}}
\begin{lemma}\label{lemma:existence_uniqueness_w}
Let $k>0$. There exists a unique $w\geq 0$ such that the solution of $f(t)f''(t)=k,\ t\geq 0; f(0)=1,f'(0)=w$ satisfies $f(1)=1+k$.
\end{lemma}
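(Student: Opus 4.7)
The plan is to use the alternative integral representation of Lemma \ref{lemma:alternative_f} to recast the problem as a monotonicity/intermediate value argument for a single function of one real variable. Setting $y = 1$ and $t = 1$ in \eqref{eq:second_alternative_f}, and writing $W^2 := w^2/(2k)$, the condition $f(1) = 1+k$ becomes
\begin{align*}
\int_W^{\sqrt{W^2 + \ln(1+k)}} \exp(v^2)\,dv = \sqrt{k/2}\,\exp(W^2).
\end{align*}
The substitution $v = \sqrt{W^2+s}$ converts this (after cancelling $\exp(W^2)$) into the equation $G(W) = \sqrt{k/2}$, where
\begin{align*}
G(W) := \int_0^{\ln(1+k)} \frac{\exp(s)}{2\sqrt{W^2+s}}\,ds.
\end{align*}
The existence and uniqueness of $w\geq 0$ thus reduces to showing that $G$ attains the value $\sqrt{k/2}$ exactly once on $[0,\infty)$.

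First I would check that $G$ is continuous on $[0,\infty)$ (the apparent singularity at $s=0$ when $W=0$ is integrable) and strictly decreasing: for $0\leq W_1<W_2$, the difference $G(W_1)-G(W_2)$ is the integral of a strictly positive function over $(0,\ln(1+k)]$. Second, the crude estimate $G(W) \leq (2W)^{-1}\int_0^{\ln(1+k)} e^s\,ds = k/(2W)$ shows $G(W)\to 0$ as $W\to\infty$. By continuity and strict monotonicity, an intermediate value argument then yields a unique $W\geq 0$ (and hence a unique $w = W\sqrt{2k}\geq 0$) with $G(W)=\sqrt{k/2}$, provided we know that $G(0) > \sqrt{k/2}$.

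The main technical point, and the one I expect to be the obstacle, is to verify $G(0) > \sqrt{k/2}$ for all $k > 0$. Substituting $s=u^2$ gives the clean expression $G(0) = \int_0^{\sqrt{\ln(1+k)}} \exp(u^2)\,du$, but a direct application of Lemma \ref{lemma:ineq_I(y)} only yields the desired inequality in some ranges of $k$. The idea is instead to study the auxiliary function $H(k):=G(0)^2 - k/2$. One has $H(0)=0$, and differentiating under the integral gives
\begin{align*}
H'(k) \;=\; \frac{G(0)}{\sqrt{\ln(1+k)}} - \frac{1}{2}.
\end{align*}
Since $\exp(u^2)\geq 1$ on $[0,\sqrt{\ln(1+k)}]$, one gets the elementary bound $G(0)\geq \sqrt{\ln(1+k)}$, hence $H'(k) \geq 1/2$ for all $k>0$. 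Therefore $H(k) \geq k/2 > 0$ for $k>0$, which gives $G(0)^2 \geq k > k/2$ and completes the argument.
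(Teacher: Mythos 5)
Your proof is correct and is essentially the paper's argument in different coordinates: after the substitution $\sigma=e^{s}$ your equation $G(W)=\sqrt{k/2}$ is exactly the paper's condition $\int_1^{1+k}(w^2+2k\ln s)^{-1/2}\,ds=1$, and you use the same strict monotonicity in $w$, the same decay as $w\to\infty$, and the same type of differentiate-in-$k$ bound at $w=0$ (your $H'(k)\geq 1/2$ yielding $G(0)\geq\sqrt{k}$ is equivalent to the paper's $F'(k_1)>k_1^{-1/2}$ yielding $F(k)>2\sqrt{k}$).
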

\begin{proof}[Proof]
Again, we rely on the representation of $f$ in \eqref{eq:f(1)_condition_2}. Thus the condition $f(1)=1+k$ can be written as
\begin{align}
\int_1^{1+k}\frac{1}{(w^2+2k\ln(s))^{\frac{1}{2}}}ds = 1.\label{eq:decrease_in_w}
\end{align} The left-hand side of \eqref{eq:decrease_in_w} decreases in $w\geq 0$ from a value greater than $\sqrt{2}$ to 0 as $w$ increases from $w=0$ to $w=\infty$. Indeed, as to $w=0$ we consider 
\begin{align*}
F(k) = \int_1^{1+k}\frac{1}{(\ln(s))^{\frac{1}{2}}}ds, k\geq 0.
\end{align*} Then $F(0)=0$ and $F'(k_1) = (\ln(1+k_1))^{-\frac{1}{2}}>(k_1)^{-\frac{1}{2}},\quad k_1>0$, since $0<\ln(1+k_1)<k_1$ for $k_1>0$. Hence,
\begin{align*}
F(k)=F(0)+\int_0^k F'(k_1)dk_1 > \int_0^k \frac{1}{\sqrt{k_1}}dk_1 = 2\sqrt{k},\quad k>0.
\end{align*} This implies that
\begin{align*}
\int_1^{1+k} \frac{1}{\sqrt{2k\ln(s)}}ds = \frac{F(k)}{\sqrt{2k}}>\sqrt{2},\quad k>0.
\end{align*} That the left-hand side of \eqref{eq:decrease_in_w} decreases strictly in $w\geq 0$, to the value 0 at $w=\infty$, is obvious. We conclude that for any $k>0$ there is a unique $w>0$ such that \eqref{eq:decrease_in_w} holds.
\end{proof}

\subsection{Proof of Theorem \ref{thm:cases_k}}

\begin{proof}
From the definition of $F$ in \eqref{eq:F(t,k)}, it follows that $F(t,k)=0$ if and only if $f(t)=g(t)$. Furthermore, we have
\begin{align}
f(t)\geq g(t), 1\leq t < \infty \iff \max_{t\geq 1} F(t,k)\leq 0\label{eq:equivalence_maximum}.
\end{align} By Lemma \ref{lemma:F(t,k)}, we have, for any $k$, $\max_{t\geq 1}F(t,k) = F(t_0(k),k)$ and by Lemma \ref{lemma:F(t,k)_decreasing}, we have that $F(t_0(k),k)$ is a strictly decreasing function of $k$. Notice that, by \eqref{eq:F(t,k)}, we can alternatively write,
\begin{align*}
F(t_0(k),k) = \int_{(W^2+\ln(f(t_0(k))))^{\frac{1}{2}}}^{(W^2+\ln(g(t_0(k))))^{\frac{1}{2}}}\exp(v^2)dv.
\end{align*} Thus, by Lemma \ref{lemma:positive_small_k}, we have on the one hand, for small $k$, that $F(t_0(k),k)>0$, and by Lemma \ref{lemma:negative_large_k}, we have on the other hand, for large $k$, that $F(t_0(k),k)\leq 0$. Therefore, we conclude that $F(t_0(k),k)\leq 0$ is equivalent to $k\geq k_c$.
\end{proof}

\subsection{Proof of Lemma \ref{lemma:F(t,k)}}
\begin{lemma}\label{lemma:F(t,k)} Let $F(t,k)$ be given as in \eqref{eq:F(t,k)}. Then, for any $k$,
\begin{align*}
\max_{t\geq 1} F(t,k) = F(t_0(k),k),
\end{align*} where $t_0(k)$ is given by \eqref{eq:equation_psi}.
\end{lemma}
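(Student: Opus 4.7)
My plan is to locate the unique critical point of the map $t\mapsto F(t,k)$ on the interior of its domain by direct differentiation, and then show the critical point is the asserted maximum. Starting from the first integral expression for $F$ in \eqref{eq:F(t,k)}, Leibniz's rule yields
$$\frac{\partial F}{\partial t}(t,k) = \exp(W^2)\left[\frac{g'(t)}{2(W^2+\ln g(t))^{1/2}} - \frac{f'(t)}{2(W^2+\ln f(t))^{1/2}}\right].$$
The second bracketed term collapses to the constant $\sqrt{k/2}$ once we invoke \eqref{eq:first_alternative_f} of Lemma \ref{lemma:alternative_f} with $y=1$, which rewrites $f'(t) = \sqrt{2k}\,(W^2 + \ln f(t))^{1/2}$.

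Setting the derivative to zero leaves the single equation $g'(t) = \sqrt{2k}\,(W^2 + \ln g(t))^{1/2}$. Squaring and substituting the identity $g'(t)^2 = \psi(t) + 2k\ln g(t)$ established in \eqref{eq:diff_g(t)} reduces it to $\psi(t) = 2kW^2 = w^2$. By the strict monotonicity of $\psi$ noted directly after \eqref{eq:psi}, this equation has the unique solution $t=t_0(k)$, matching the definition in \eqref{eq:equation_psi}.

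To confirm that this critical point is in fact a maximum, I would read off the sign of $\partial F/\partial t$ from the same reduction: for $t<t_0(k)$, the strict decrease of $\psi$ gives $\psi(t)>w^2$, hence $g'(t)^2 > 2k(W^2+\ln g(t))$, so $\partial F/\partial t>0$; for $t>t_0(k)$ the inequality reverses and $\partial F/\partial t<0$. Thus $F(\cdot,k)$ is strictly increasing on $(1,t_0(k))$ and strictly decreasing on $(t_0(k),\infty)$, so $\max_{t\geq 1} F(t,k) = F(t_0(k),k)$.

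The only mildly delicate point I anticipate is a domain check: the integrand is only real when $W^2+\ln g(t)\geq 0$. For $t$ near $1$ where this can fail, one has $g(t)<\exp(-W^2)\leq 1 \leq f(t)$, so $f(t)>g(t)$ strictly and the integral representation is irrelevant there. Because $g'(t)>0$ on $(1,\infty)$, the critical-point identity $g'(t_0(k)) = \sqrt{2k}\,(W^2+\ln g(t_0(k)))^{1/2}$ forces $W^2+\ln g(t_0(k))>0$, placing $t_0(k)$ safely in the interior of the natural domain of $F$. The substantive work is the clean reduction of $\partial F/\partial t=0$ to $\psi(t)=w^2$; once that is in place the monotonicity of $\psi$ closes the argument.
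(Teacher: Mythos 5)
Your proof is correct and follows essentially the same route as the paper: differentiate $F$, collapse the $f$-term to the constant $\sqrt{k/2}$ via Lemma \ref{lemma:alternative_f}, rewrite $g'$ through \eqref{eq:diff_g(t)} to reduce the critical-point equation to $\psi(t)=w^2$, and invoke the strict monotonicity of $\psi$. Your explicit sign analysis of $\partial F/\partial t$ on either side of $t_0(k)$ and the domain check near $t=1$ are welcome refinements of details the paper treats tersely, but they do not constitute a different argument.
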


\begin{proof}
To find, for a given $k>0$, the maximum of $F(t,k)$ over $t\geq 1$, we compute from \eqref{eq:F(t,k)} 
\begin{align}
\frac{\partial F}{\partial t}(t,k) & = -\sqrt{\frac{k}{2}}\exp(W^2)+\frac{d}{dt}\left((W^2+\ln(g(t)))^{\frac{1}{2}}\right)\exp(W^2+\ln(g(t)))\nonumber\\
& = \frac{1}{2}(W^2+\ln(g(t)))^{-\frac{1}{2}}\frac{g'(t)}{g(t)}\exp(W^2+\ln(g(t)))-\sqrt{\frac{k}{2}}\exp(W^2)\nonumber\\
& = \exp(W^2)\left(\frac{1}{2}(W^2+\ln(g(t)))^{-\frac{1}{2}}\frac{g'(t)}{g(t)}\exp(\ln(g(t)))-\sqrt{\frac{k}{2}} \right)\nonumber\\
%& = \exp(W^2)\left(\frac{1}{2}(W^2+\ln(g(t)))^{-\frac{1}{2}}g'(t)-\sqrt{\frac{1}{2}k} \right)\\
& = \exp(W^2)\sqrt{\frac{k}{2}}\left(\left(\sqrt{\frac{1}{2k}}\sqrt{\frac{g'(t)^2}{W^2+\ln(g(t))}} \right)-1 \right)\nonumber\\
& = \exp(W^2)\sqrt{\frac{k}{2}}\left(\left(\sqrt{\frac{\left(\frac{g'(t)}{\sqrt{2k}}\right)^2}{W^2+\ln(g(t))}} \right)-1 \right).\label{eq:intermediate_partial_Ft}
\end{align} Then, using \eqref{eq:diff_g(t)} in \eqref{eq:intermediate_partial_Ft}, we get
\begin{align}
\frac{\partial F}{\partial t}(t,k)& = \exp(W^2)\sqrt{\frac{k}{2}}\left(\left(\sqrt{\frac{\frac{\psi(t)}{2k}+\ln(g(t))}{W^2+\ln(g(t))}} \right)-1 \right).\label{eq:F(t,k)alternative}
\end{align} Then, $\frac{\partial F}{\partial t}(t,k) = 0$ if and only if $\frac{\psi(t)}{2k} = W^2$ or in other words, if and only if $\psi(t)=w^2$. Recall from \eqref{eq:equation_psi} that the unique solution $t>1$ of the equation $\psi(t)=w^2$ is given by $t_0(k)$. Thus, we have
\begin{align*}
\frac{\partial F}{\partial t}(t_0(k),k) = 0.%\label{eq:partial_F_t}
\end{align*} Since $\psi(t)$ is strictly decreasing in $t>1$, while $W^2$ does not depend on $t$, we have from \eqref{eq:F(t,k)alternative} that $\frac{\partial^2F}{\partial t^2}(t_0(k),k)<0$. Hence, for $k>0$,
\begin{align*}
\max_{t\geq 1} F(t,k) = F(t_0(k),k),
\end{align*} which completes the proof.
\end{proof}

\subsection{Proof of Lemma \ref{lemma:F(t,k)_decreasing}}
\begin{lemma}\label{lemma:F(t,k)_decreasing}
Let $F(t,k)$ be given as in \eqref{eq:F(t,k)}. Then, $F(t_0(k),k)$ is a strictly decreasing function of $k$, i.e.,
\begin{align*}
\frac{\partial F}{\partial k}(t_0(k),k)<0, \quad k>0.
\end{align*}
\end{lemma}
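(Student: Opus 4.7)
My plan is to apply an envelope-theorem argument so that the total derivative reduces to a partial derivative, then compute $\partial_k F(t_0(k),k)$ explicitly and group the terms so that strict negativity follows from $W'(k)>0$ (Lemma \ref{lemma:increasing_W}) together with the elementary fact that $g(t)/g'(t)<t$ for $t>1$.

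First, since $t_0(k)$ is a critical point of $t\mapsto F(t,k)$ by Lemma \ref{lemma:F(t,k)}, the chain rule gives $\frac{d}{dk}F(t_0(k),k)=\frac{\partial F}{\partial k}(t_0(k),k)$, so it suffices to show the latter is strictly negative. I would compute $\partial_k F$ starting from representation \eqref{eq:F(t,k)}, namely
\[
F(t,k)=-t\sqrt{k/2}\exp(W^2)+\int_W^{(W^2+\ln g(t))^{1/2}}\exp(v^2)\,dv,
\]
differentiating termwise in $k$ while remembering that $W=W(k)$, and using $\partial_k\ln g(t;k)=1/(2k)$. Evaluating at $t=t_0(k)$, the defining identity $\psi(t_0)=2kW^2$ combined with \eqref{eq:diff_g(t)} yields $g'(t_0)=\sqrt{2k}\,(W^2+\ln g(t_0))^{1/2}$, which collapses the factor $\exp(W^2+\ln g(t_0))/(W^2+\ln g(t_0))^{1/2}$ into $g(t_0)\sqrt{2k}\exp(W^2)/g'(t_0)$.

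After collecting like terms I expect to arrive at the identity
\[
\frac{\partial F}{\partial k}(t_0(k),k)=\exp(W^2)\left[\left(\frac{g(t_0)}{g'(t_0)}-t_0\right)\left(\frac{1}{2\sqrt{2k}}+\sqrt{2k}\,W\,W'(k)\right)-W'(k)\right].
\]
A short calculation gives $g(t)/g'(t)=2t\ln t/(2\ln t+1)<t$ for $t>1$, so the first round bracket is strictly negative while the second is strictly positive; the final summand $-W'(k)$ is strictly negative by Lemma \ref{lemma:increasing_W}. The main obstacle I anticipate is the careful bookkeeping in the partial-derivative computation and identifying the regrouping that isolates $g(t_0)/g'(t_0)-t_0$ as a common factor; once the displayed identity is in hand, the conclusion $\partial_k F(t_0(k),k)<0$ is immediate.
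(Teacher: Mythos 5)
Your proposal is correct and follows essentially the same route as the paper: compute $\partial F/\partial k$ from \eqref{eq:F(t,k)}, evaluate at $t=t_0(k)$ using $g'(t_0)=(w^2+2k\ln(g(t_0)))^{1/2}$ to isolate the factor $g(t_0)/g'(t_0)-t_0=-t_0/(1+2\ln t_0)<0$, and conclude from $W'(k)>0$ (Lemma \ref{lemma:increasing_W}); your displayed identity is exactly the paper's \eqref{eq:partial_F_2} at $t_0(k)$ with $\sqrt{2k}$ distributed into the bracket. The only addition is your explicit envelope-theorem remark, which the paper leaves implicit.
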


\begin{proof}
We compute $\frac{\partial F}{\partial k}(t,k)$ for any $t>1$, and set $t=t_0(k)$ in the resulting expression. Thus, from \eqref{eq:F(t,k)},
\begin{multline*}
\frac{\partial F}{\partial k}(t,k) = \frac{1}{2}(W^2+\ln(g(t)))^{-\frac{1}{2}}\frac{d}{dk}\left(W^2+\ln(\sqrt{k}t(2\ln(t))^{\frac{1}{2}}) \right)\exp(W^2+\ln(g(t)))-\\
-W'\exp(W^2)-\frac{t}{2\sqrt{2k}}\exp(W^2)-t\sqrt{\frac{1}{2}k}(W^2)'\exp(W^2).
\end{multline*} Simplifying this expression, yields
\begin{multline*}
\frac{\partial F}{\partial k}(t,k) = \exp(W^2)\left(\frac{1}{2}g(t)(W^2+\ln(g(t)))^{-\frac{1}{2}}\left((W^2)'+\frac{1}{2k}\right)-W'-\frac{t}{2\sqrt{2k}}-t\sqrt{\frac{1}{2}k}(W^2)'\right).
\end{multline*}
From $(W^2)'=2WW'$, we then have
\begin{align}
\frac{\partial F}{\partial k}(t,k) & =  \exp(W^2)\left(\frac{(WW'+\frac{1}{4k})g(t)}{(W^2+\ln(g(t)))^{\frac{1}{2}}}-W'-\frac{t}{2\sqrt{2k}}-t\sqrt{2k}WW' \right)\nonumber\\
& = \exp(W^2)\left(\left(\frac{g(t)}{(W^2+\ln(g(t)))^{\frac{1}{2}}}-t\sqrt{2k} \right)(WW'+\frac{1}{4k})-W' \right).\label{eq:partial_F_2}
\end{align} We next take $t=t_0(k)$ in \eqref{eq:partial_F_2}, so that we can use that
\begin{align*}
g'(t_0(k))=(w^2+2k\ln(g(t_0(k))))^{\frac{1}{2}}
\end{align*} and $W^2 = \frac{w^2}{2k}$, and observe that
\begin{align*}
\frac{g(t)}{(W^2+\ln(g(t)))^{\frac{1}{2}}}-t\sqrt{2k} & = \sqrt{2k}\left(\frac{g(t)}{(w^2+2k\ln(g(t)))^{\frac{1}{2}}}-t \right)\\
& = \sqrt{2k}\left(\frac{g(t)}{g'(t)}-t \right) = \frac{-t\sqrt{2k}}{1+2\ln(t)},\quad t = t_0(k).
\end{align*} We claim that,
\begin{align*}
\frac{\partial F}{\partial k}(t_0(k),k) & = -\exp(W^2)\left(\frac{t\sqrt{2k}}{1+2\ln(t)}(WW'+\frac{1}{4k}+W') \right),\quad t = t_0(k),
\end{align*} is negative since $W(k)$ increases in $k>0$, strictly. The latter fact is proven in Lemma \ref{lemma:increasing_W}. We conclude that $F(t_0(k),k)$ is a strictly decreasing function of $k>0$.
\end{proof}

\begin{lemma}\label{lemma:increasing_W}
Let $f(t)$ be given by \eqref{eq:f} with initial conditions $f(0)=1$ and $f'(0)=w$, where $w$ is such that $f(1)=1+k$. Furthermore, let $W(k)=\frac{w}{\sqrt{2k}}$. Then, $W(k)$ is a strictly increasing function of $k$.
\end{lemma}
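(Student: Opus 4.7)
The plan is to use implicit differentiation on the equation defining $w(k)$ and reduce the monotonicity claim to a simple inequality. Dividing \eqref{eq:decrease_in_w} through by $\sqrt{2k}$ and writing $W = w/\sqrt{2k}$, we see that $W = W(k)$ is implicitly defined by
\begin{equation*}
G(W,k) \;:=\; \int_1^{1+k}\frac{ds}{(W^2+\ln s)^{1/2}} \;=\; \sqrt{2k}.
\end{equation*}
A first observation is that $W(k)>0$ for every $k>0$: the existence argument in Lemma \ref{lemma:existence_uniqueness_w} shows that at $w=0$ the left-hand side of \eqref{eq:decrease_in_w} exceeds $\sqrt{2}>1$, so the unique $w$ solving \eqref{eq:decrease_in_w} is strictly positive.

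Next I would differentiate $G(W(k),k)=\sqrt{2k}$ in $k$. Leibniz's rule gives $G_k=(W^2+\ln(1+k))^{-1/2}$ and $G_W=-W\int_1^{1+k}(W^2+\ln s)^{-3/2}\,ds$; since $W>0$, we have $-G_W>0$. Solving the resulting identity $G_W\,W'(k)+G_k=1/\sqrt{2k}$ for $W'(k)$, the sign of $W'(k)$ coincides with the sign of $G_k-1/\sqrt{2k}$. Thus $W'(k)>0$ is equivalent to the single inequality
\begin{equation*}
W^2+\ln(1+k)<2k.
\end{equation*}

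Verifying this last inequality is the main obstacle, and I would handle it with two elementary a priori bounds. First, $w(k)<k$: in \eqref{eq:decrease_in_w}, the integrand is strictly decreasing in $s\in[1,1+k]$ (because $\ln s$ is strictly increasing and $k>0$), so
\begin{equation*}
1 \;=\; \int_1^{1+k}(w^2+2k\ln s)^{-1/2}\,ds \;<\; \int_1^{1+k} w^{-1}\,ds \;=\; k/w,
\end{equation*}
giving $W^2=w^2/(2k)<k/2$. Second, the standard estimate $\ln(1+k)<k$ for $k>0$. Combining these two bounds yields $W^2+\ln(1+k)<k/2+k=3k/2<2k$, establishing the displayed inequality and hence $W'(k)>0$ for every $k>0$. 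The only delicate step is the a priori positivity $W>0$, which is needed to keep the denominator in the formula for $W'(k)$ nonzero; beyond that the argument reduces to a one-line estimate.
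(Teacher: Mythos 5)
Your proof is correct, but it takes a genuinely different route from the paper. The paper differentiates the constraint \eqref{eq:f(1)_condition_2} with respect to $k$ in the variable $w(k)$ itself, which produces the awkward term $\int_1^{1+k}\ln(s)\,(w^2+2k\ln(s))^{-3/2}\,ds$ that must be bounded by feeding the original constraint back in; this only yields that $w(k)$ is increasing, so a second, separate argument is needed (the identity \eqref{eq:w_frac_k} together with the monotonicity of $f(t;k)$ in $k$, via \eqref{eq:alternative_rep_f}) to show $w(k)/k$ is increasing, from which $W(k)=\bigl(w(k)/k\bigr)\sqrt{k/2}$ being increasing finally follows. You instead rescale first, writing the constraint as $\int_1^{1+k}(W^2+\ln s)^{-1/2}\,ds=\sqrt{2k}$, so that the $k$-dependence sits only in the upper limit and on the right-hand side; Leibniz's rule then reduces strict increasingness of $W$ to the single inequality $W^2+\ln(1+k)<2k$, which you settle with the elementary a priori bounds $w<k$ (correctly derived from the decreasingness of the integrand, and also obtainable as in Lemma \ref{lemma:positive_small_k}) and $\ln(1+k)<k$. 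Your computation of $G_k$, $G_W$, and the sign analysis (using $G_W<0$, which requires the positivity $W>0$ that you rightly single out and justify via Lemma \ref{lemma:existence_uniqueness_w}) all check out; the implicit function theorem applies precisely because $G_W\neq 0$, so the differentiation is legitimate. Your argument is shorter and targets $W(k)$ directly, at the cost of needing the a priori estimate $w<k$; the paper's two-stage argument is longer but produces the additional facts that $w(k)$ and $w(k)/k$ are themselves increasing, the latter of which is reused in the proof of Theorem \ref{thm:bounds_f/g}.
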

\begin{proof}
First, by Equation \eqref{eq:alternative_rep_f} with $y=1$, we get
\begin{align}
\int_1^{f(t)} \frac{1}{(w^2+2k\ln(s))^{\frac{1}{2}}}ds = t.\label{eq:third_alternative_f}
\end{align} Second, from the fundamental theorem of calculus, we have
\begin{align}
f(1) = 1+w+\int_0^1\left(\int_0^s \frac{kdu}{f(u)} \right)ds. \label{eq:f(1)_condition}
\end{align} Now, we derive the desired monotonicity property. We require $f(1)=1+k$. We get from \eqref{eq:f(1)_condition},
\begin{align}
\frac{w}{k} = 1-\int_0^1\left(\int_0^s \frac{du}{f(u)} \right) ds.\label{eq:w_frac_k}
\end{align} From, \eqref{eq:third_alternative_f}, with $t=1$ and $f(1)=1+k$, we get
\begin{align}
\int_{1}^{1+k} \frac{1}{(w^2+2k\ln(s))^{\frac{1}{2}}}ds = 1.\label{eq:f(1)_condition_2}
\end{align} From \eqref{eq:f(1)_condition_2}, noting that $w=w(k)$, we get then
\begin{align}
0 & = \frac{d}{dk}\left(\int_1^{1+k} \frac{1}{(w^2(k)+2k\ln(s))^{\frac{1}{2}}}ds \right)\nonumber\\
& = \frac{1}{(w^2(k)+2k\ln(1+k))^{\frac{1}{2}}}+\int_1^{1+k} -\frac{1}{2}\frac{2w(k)w'(k)+2\ln(s)}{(w^2(k)+2k\ln(s))^{\frac{3}{2}}}ds \nonumber\\
& = \frac{1}{(w^2(k)+2k\ln(1+k))^{\frac{1}{2}}}-w(k)w'(k)\int_1^{1+k} \frac{1}{(w^2(k)+2k\ln(s))^{\frac{3}{2}}}ds - \nonumber \\
& \quad \quad \quad \quad \quad \quad \quad \quad \quad \quad \quad \quad - \int_1^{1+k}\frac{\ln(s)}{(w^2(k)+2k\ln(s))^{\frac{3}{2}}}ds.\label{eq:diff_calculus}
\end{align} Hence, rewriting \eqref{eq:diff_calculus} yields,
\begin{multline}
w(k)w'(k)\int_1^{1+k} \frac{1}{(w^2(k)+2k\ln(s))^{\frac{3}{2}}}ds = \frac{1}{(w^2(k)+2k\ln(1+k))^{\frac{1}{2}}} \\
- \int_1^{1+k} \frac{\ln(s)}{(w^2(k)+2k\ln(s))^{\frac{3}{2}}}ds.\label{eq:wkw_primek}
\end{multline} Consider the last term in \eqref{eq:wkw_primek}. We have for $1\leq s\leq 1+k$,
\begin{align}
\frac{\ln(s)}{(w^2(k)+2k\ln(s))^{\frac{3}{2}}} & = \frac{\ln(s)}{w^2(k)+2k\ln(s)}\frac{1}{(w^2(k)+2k\ln(s))^{\frac{1}{2}}} \nonumber\\
& \leq \frac{\ln(1+k)}{w^2(k)+2k\ln(1+k)}\frac{1}{(w^2(k)+2k\ln(s))^{\frac{1}{2}}}.\label{ineq:lns}
\end{align} Therefore, by integrating over the inequality in \eqref{ineq:lns}, we get
\begin{align*}
\int_1^{1+k} \frac{\ln(s)}{(w^2(k)+2k\ln(s))^{\frac{3}{2}}}ds & \leq \frac{\ln(1+k)}{w^2(k)+2k\ln(1+k)}\int_1^{1+k}\frac{1}{(w^2(k)+2k\ln(s))^{\frac{1}{2}}}ds \nonumber\\
& = \frac{\ln(1+k)}{w^2(k)+2k\ln(1+k)},
\end{align*} where we used \eqref{eq:f(1)_condition_2}. Therefore, see \eqref{eq:wkw_primek},
\begin{align}
w(k)w'(k)\int_1^{1+k} \frac{1}{(w^2(k)+2k\ln(s))^{\frac{3}{2}}}ds \geq 
\frac{1}{(w^2(k)+2k\ln(1+k))^{\frac{1}{2}}}-\frac{\ln(1+k)}{w^2(k)+2k\ln(1+k)}>0,\label{eq:wkw_prime_2}
\end{align} where the latter inequality follows from
\begin{align*}
(w^2(k)+2k\ln(1+k))^{\frac{1}{2}} > (2k\ln(1+k))^{\frac{1}{2}} > \ln(1+k),
\end{align*} since $u>\ln(1+u)$ for $u>0$. We conclude from \eqref{eq:wkw_prime_2} that $w(k)$ strictly increases in $k>0$.

Next, we consider for a fixed $t>0$ the identity \eqref{eq:alternative_rep_f}. For any $s>1$, the integrand $\left(w^2(k)+2k\ln(s)\right)^{-\frac{1}{2}}$ decreases strictly in $k>0$, and hence $f(t)=f(t;k)$ increases strictly in $k>0$, since $t>0$ is fixed. As a consequence, we conclude from \eqref{eq:w_frac_k} that $w(k)/k$ strictly increases in $k>0$ since $1/f(u)$ strictly decreases in $k>0$ for any $u\in(0,1)$.
\end{proof}

\subsection{Proof of Lemma \ref{lemma:positive_small_k}}
\begin{lemma}\label{lemma:positive_small_k}
Let $F(t,k)$ be given as in \eqref{eq:F(t,k)}. Then, for small $k$, we have that $F(t_0(k),k)>0$. 
\end{lemma}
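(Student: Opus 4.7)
The plan is to reduce $F(t_0(k),k)$ to a quantity that can be analyzed asymptotically by exploiting a clean identity that follows from the defining equation $\psi(t_0)=w^2$. Writing $L:=\ln t_0$, that equation rearranges to $\ln(2kL)=2+1/(2L)-2W^2$, which gives
$$ \sqrt{k/2}\,\exp(W^2) = \frac{e}{2\sqrt{L}}\exp(1/(4L)). $$
Combining this with $t_0=e^L$ and $\exp(B^2)=\exp(L+1+1/(4L))=e\cdot t_0\cdot\exp(1/(4L))$ (using $B^2=W^2+\ln g(t_0)=L+1+1/(4L)$ as computed in the proof of Lemma \ref{lemma:F(t,k)}) yields the key identity
$$ t_0\sqrt{k/2}\,\exp(W^2) = \frac{\exp(B^2)}{2\sqrt{L}}, $$
so $F(t_0,k)=\int_W^{B}\exp(v^2)\,dv - \exp(B^2)/(2\sqrt{L})$.

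For small $k$, Lemma \ref{lemma:existence_uniqueness_w} together with an expansion of the equation $f(1)=1+k$ gives $w(k)=k/2+O(k^2)$, so $W=O(\sqrt{k})$ and in particular $\int_0^W\exp(v^2)\,dv\leq W\exp(W^2)=O(\sqrt{k})$. The equation $\psi(t_0)=w^2$ then forces $L\sim e^2/(2k)$, hence $L\to\infty$ as $k\to 0^+$.

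Next I would combine the standard asymptotic expansion (obtained by iterating the identity $\int e^{v^2}\,dv = e^{v^2}/(2v)+\int e^{v^2}/(2v^2)\,dv$)
$$\int_0^{B}\exp(v^2)\,dv = \frac{\exp(B^2)}{2B}\left[1+\frac{1}{2B^2}+\frac{3}{4B^4}+O(1/B^6)\right]$$
with Taylor expansions of $1/B^{2n+1}$ in powers of $1/L$, using $B^2=L+1+1/(4L)$. Concretely,
$$\frac{1}{B}=\frac{1}{\sqrt{L}}-\frac{1}{2L^{3/2}}+\frac{1}{4L^{5/2}}+O(L^{-7/2}),\quad \frac{1}{2B^3}=\frac{1}{2L^{3/2}}-\frac{3}{4L^{5/2}}+O(L^{-7/2}),\quad \frac{3}{4B^5}=\frac{3}{4L^{5/2}}+O(L^{-7/2}).$$
Summing these and subtracting $1/\sqrt{L}$ (coming from $\exp(B^2)/(2\sqrt{L})$), the $1/\sqrt{L}$ and $1/L^{3/2}$ contributions cancel exactly, leaving a residual of $1/(4L^{5/2})+O(L^{-7/2})$. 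Multiplying by $\exp(B^2)/2$ and subtracting the negligible $\int_0^W\exp(v^2)\,dv$ gives
$$F(t_0,k) = \frac{\exp(B^2)}{8\,L^{5/2}}\bigl(1+O(1/L)\bigr) - O(\sqrt{k}).$$
Since $\exp(B^2)\sim e^{L+1}$ grows doubly exponentially in $1/k$, while the correction is only $O(\sqrt{k})$, the leading term dominates overwhelmingly, and $F(t_0(k),k)>0$ for all sufficiently small $k$.

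The main obstacle is the severe cancellation phenomenon: the leading two orders of the asymptotic expansion of $\int_0^B\exp(v^2)\,dv-\exp(B^2)/(2\sqrt{L})$ vanish identically even though each is of the enormous size $\exp(B^2)/L^{3/2}$. One must therefore carry three terms of the asymptotic expansion of the integral (through $3/(4B^5)$) and maintain consistent third-order Taylor expansions of the reciprocal powers $1/B^{2n+1}$ about $1/\sqrt{L}$, in order to extract the surviving $1/(4L^{5/2})$ coefficient and verify its positive sign.
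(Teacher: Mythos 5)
Your proof is correct, but it takes a genuinely different and much heavier route than the paper. The paper's argument is a two-line comparison at a single well-chosen point: by Taylor's theorem $f(t)=1+tw+\tfrac12 t^2 k/f(\xi_t)$ with $f(\xi_t)\geq 1$ and $w\leq k$, so $f(1/\sqrt{k})\leq \tfrac32+\sqrt{k}$, while $g(1/\sqrt{k})=(-\ln k)^{1/2}\to\infty$; hence $g>f$ at $t=1/\sqrt{k}$ for $k$ small (explicitly $k<0.05$), which by the definition \eqref{eq:def_F} gives $F(1/\sqrt{k},k)>0$, and then $F(t_0(k),k)=\max_{t\geq 1}F(t,k)\geq F(1/\sqrt{k},k)>0$. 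You instead evaluate $F$ exactly at the maximizer, using the clean identity $t_0\sqrt{k/2}\,e^{W^2}=e^{B^2}/(2\sqrt{L})$ that follows from $\psi(t_0)=w^2$, and then extract the sign from a three-term asymptotic expansion of $\int_0^B e^{v^2}dv$ after the leading two orders cancel. I checked your cancellation: the coefficients of $L^{-3/2}$ are $-\tfrac12+\tfrac12=0$ and those of $L^{-5/2}$ are $\tfrac14-\tfrac34+\tfrac34=\tfrac14$, and the neglected pieces ($\int_0^W e^{v^2}dv=O(\sqrt{k})$, the $O(B^{-6})$ tail of the expansion, and the $O(L^{-7/2})$ Taylor errors) are all $o\bigl(e^{B^2}L^{-5/2}\bigr)$, so the argument closes. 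Your approach buys more: it identifies the actual asymptotic size $F(t_0(k),k)\sim e^{B^2}/(8L^{5/2})$ with $L\sim e^2/(2k)$, rather than merely positivity. The cost is that it leans on the delicate two-order cancellation and on rigorous error control for the asymptotic series of $\int_0^B e^{v^2}dv$ (which should be justified by the integration-by-parts remainder bound, not just quoted), whereas the paper's pointwise comparison is elementary, self-contained, and yields an explicit numerical threshold for "small $k$". One cosmetic slip: $e^{B^2}\sim e^{L+1}$ with $L\sim e^2/(2k)$ is singly, not doubly, exponential in $1/k$; this does not affect the conclusion.
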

\begin{proof}
We have for $t>0$,
\begin{align*}
f(t)& =f(0)+tf'(0)+\frac{1}{2}t^2f''(\xi_t)\\
& = 1+tw+\frac{1}{2}t^2\frac{k}{f(\xi_t)},
\end{align*} where $\xi_t$ is a number between $0$ and $t$. Since $f(1)=1+k$ and $f(\xi_t)\geq 1 > 0$, it follows that $w\leq k$. Therefore,
\begin{align*}
f\left(\frac{1}{\sqrt{k}}\right) \leq 1 + \frac{w}{\sqrt{k}} + \frac{1}{2k}k \leq \frac{3}{2}+\sqrt{k}.
\end{align*} On the other hand
\begin{align*}
g\left(\frac{1}{\sqrt{k}}\right) = \frac{1}{\sqrt{k}}\left(2k\ln(\frac{1}{\sqrt{k}}) \right)^{\frac{1}{2}} = (-\ln(k))^{\frac{1}{2}},
\end{align*} and this exceeds $\frac{3}{2}+\sqrt{k}$ when $k$ is small enough. Numerically, by solving the equation $(-\ln(k))^{\frac{1}{2}} = \frac{3}{2}+\sqrt{k}$ for $k>0$, we find that $k<0.05$ is small enough. We conclude from \eqref{eq:F(t,k)} that $F(t_0(k),k)\geq F\left(\frac{1}{\sqrt{k}},k\right)>0$ when $k$ is small.
\end{proof}

\subsection{Proof of Lemma \ref{lemma:negative_large_k}}
\begin{lemma}\label{lemma:negative_large_k}
Let $F(t,k)$ be given as in \eqref{eq:F(t,k)}. Then, for large $k$, we have that $F(t_0(k),k)\leq 0$.
\end{lemma}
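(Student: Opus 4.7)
My plan is to exploit the alternative characterization that $F(t_0(k),k)\le 0$ is equivalent, by positivity of $e^{v^2}$ in \eqref{eq:def_F}, to the inequality $g(t_0(k))\le f(t_0(k))$. So the game reduces to bounding $f(t_0(k))$ from below by a quantity of order $k$ while bounding $g(t_0(k))$ from above by a constant, and then noting that the former dominates the latter for all sufficiently large~$k$.

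First I would establish a clean lower bound $w\ge k/2$ for the initial slope corresponding to the side condition $f(1)=1+k$. By Taylor's theorem, $f(1)=1+w+\tfrac{1}{2}f''(\xi)$ for some $\xi\in(0,1)$, and $f''(\xi)=k/f(\xi)\le k$ since $f\ge 1$ (which in turn follows from $f(0)=1$, $f'(0)=w\ge 0$, and $f''=k/f>0$). Hence $w\ge k/2$, and consequently $W^2=w^2/(2k)\ge k/8$. Combining this with the identity $f'(t)=(w^2+2k\ln f(t))^{1/2}\ge w$ from Lemma~\ref{lemma:alternative_f} gives $f(t)\ge 1+wt$ for all $t\ge 0$, so in particular
\begin{equation*}
f(t_0(k))\;\ge\;1+t_0(k)\,w(k)\;\ge\;1+\tfrac{k}{2}.
\end{equation*}

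Next I would control $t_0(k)$ from above and thereby bound $g(t_0(k))$. Evaluating $\psi$ from \eqref{eq:psi} at the candidate point $t^\ast=e^{4/k}$ gives
\begin{equation*}
\psi(t^\ast)\;=\;2k+\frac{k^2}{8}-k\ln 8\;=\;\frac{k^2}{8}+k(2-\ln 8),
\end{equation*}
which is strictly less than $k^2/4\le w^2=\psi(t_0(k))$ once $k$ is large enough. Since $\psi$ is strictly decreasing on $(1,\infty)$, we conclude $t_0(k)\le t^\ast=e^{4/k}$, and hence $2k\ln t_0(k)\le 8$. Plugging this into \eqref{eq:f(t)_approx} yields
\begin{equation*}
g(t_0(k))\;=\;t_0(k)\sqrt{2k\ln t_0(k)}\;\le\;e^{4/k}\sqrt{8},
\end{equation*}
a quantity that remains bounded (and tends to $\sqrt{8}\approx 2.83$) as $k\to\infty$.

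Combining these two estimates, for all sufficiently large $k$ we have
\begin{equation*}
f(t_0(k))\;\ge\;1+\tfrac{k}{2}\;>\;e^{4/k}\sqrt{8}\;\ge\;g(t_0(k)),
\end{equation*}
so $(W^2+\ln g(t_0(k)))^{1/2}\le (W^2+\ln f(t_0(k)))^{1/2}$ and therefore $F(t_0(k),k)\le 0$, which is the claim. The main subtlety, I expect, is the choice of the auxiliary point $t^\ast$: one needs a point at which $\psi$ is still of order $k^2$ (to beat $w^2$) while simultaneously $\ln t^\ast=O(1/k)$ so that $g(t^\ast)$ stays bounded. The particular choice $\ln t^\ast=4/k$ balances the dominant $k^2/(2\ln t^\ast)$ term in $\psi$ against the $w^2\ge k^2/4$ lower bound; any constant strictly smaller than $2$ in place of $4$ would work as well, but $4$ keeps the arithmetic transparent.
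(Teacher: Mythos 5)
Your argument is correct, and it takes a genuinely different route from the paper. The paper proves the stronger statement that $f(t)\geq g(t)$ for \emph{all} $t\geq 1$ when $k$ is large, by contradiction: a first crossing point $t_1$ with $f(t_1)=g(t_1)$ and $f'(t_1)\leq g'(t_1)$ would force $w^2\leq\psi(t_1)$, while convexity of $f$ and $f(1)=1+k$ force $t_1>\exp(k/2)$, so that $\psi(t_1)<2k+1-2k\ln(k)<0$ for $k>3.2$ — a contradiction. You instead work directly at the maximizer $t_0(k)$, using the defining relation $\psi(t_0(k))=w^2$ together with the Taylor-derived bound $w\geq k/2$ to trap $t_0(k)\leq e^{4/k}$, so that $g(t_0(k))\leq e^{4/k}\sqrt{8}$ stays bounded while $f(t_0(k))\geq 1+k/2$ grows linearly. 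Both proofs rest on the same two ingredients (the strict decrease of $\psi$ and a linear-in-$k$ lower bound extracted from the side condition $f(1)=1+k$), but yours is more economical for the statement as literally posed, since it only needs control at the single point $t_0(k)$; the paper's version buys the global inequality $f\geq g$ and a sharper explicit threshold in $k$. Two small remarks. First, your concluding aside is backwards: with $\ln t^{\ast}=c/k$ the dominant term of $\psi(t^{\ast})$ is $k^2/(2c)$, and to beat $w^2\geq k^2/4$ you need $c>2$, so it is constants strictly \emph{larger} than $2$ (not smaller) that would also work; this does not affect the proof, since $c=4$ is fine. Second, your reduction of $F(t_0(k),k)\leq 0$ to $g(t_0(k))\leq f(t_0(k))$ via positivity of $e^{v^2}$ in \eqref{eq:def_F} is exactly the equivalence the paper itself uses in \eqref{eq:equivalence_maximum}, so no gap there.
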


\begin{proof}
We show that $f(t)\geq g(t)$ for all $t\geq 1$ when $k$ is large enough. We have $f(1)=1+k>0=g(1)$. Now suppose that there is a $t>1$ such that $f(t)<g(t)$. Then there is also a $t_1>1$ such that $f(t_1)=g(t_1)$ and $f'(t_1)\leq g'(t_1)$. We infer, by the derivatives of the functions $f$ and $g$ given in Equations \eqref{eq:first_alternative_f} and \eqref{eq:diff_g(t)}, with \eqref{eq:psi}, from $f(t_1)=g(t_1)$ and $f'(t_1)\leq g'(t_1)$, that
\begin{align}
w^2 \leq 2k+\frac{k}{2\ln(t)}-k\ln(2k\ln(t)) = \psi(t) \quad \text{at}\ t=t_1.\label{eq:rhs_psi}
\end{align} At the same time, we have by convexity of $f(t), 0\leq t<\infty$, and $f(1)=1+k$ that
\begin{align*}
f(t)\geq 1+tk,\quad t\geq 1.
\end{align*} Hence, when $\frac{1}{2}k\geq \ln(t)$, we have
\begin{align*}
f(t)\geq 1+tk> tk = t\left(2k\cdot\frac{1}{2}k\right)^{\frac{1}{2}} \geq t(2k\ln(t))^{\frac{1}{2}} = g(t).
\end{align*} Since $f(t_1)=g(t_1)$, we thus have that $t_1>\exp(\frac{1}{2}k)$. The right-hand side of \eqref{eq:rhs_psi} decreases in $t>1$, since the function $\psi(t)$ is strictly decreasing, and its value at $t=t_1$ is therefore less than
\begin{align*}
2k+\frac{k}{2\cdot \frac{1}{2}k}-k\ln(2k\cdot\frac{1}{2}k) = 2k+1-2k\ln(k).
\end{align*} Since $2k+1-2k\ln(k)<0$ for large $k$, \eqref{eq:rhs_psi} cannot hold for large $k$. Numerically, by solving the equation $2k+1-2k\ln(k)=0$, for $k>0$, we find that $k>3.2$ is large enough. This gives the result.
\end{proof}

\subsection{Proof of Theorem \ref{thm:bounds_f/g}}
\begin{proof} Let $f(t)=f(t;k)$ be given by \eqref{eq:f} with initial conditions $f(0)=1,f'(0)=w$ such that $f(1)=1+k$, and let $g(t)=g(t;k)$ be given by \eqref{eq:f(t)_approx}. Before we turn to the proof of inequalities \eqref{eq:lowerbound_f_g} and \eqref{eq:upperbound_f_g}, we first state some numerical results obtained by Newton's method: the unique number $k_c$ that determines whether the ratio of $f$ and $g$ is positive or not, is given by $k_c=1.0384$, the corresponding value of $w$ such that $f(1)=1+k_c$ is given by $w(k_c)=0.6218$ and the corresponding solution to the equation $\psi(t_0(k_c)) = w(k_c)^2$ is given by $t_0(k_c)=t_2(k_c)=18.3798$. Furthermore, by Newton's method, we have that
\begin{align*}
\frac{f_0(x)}{g(x;1)}\leq 1, x_1\leq x\leq x_2;\quad \frac{f_0(x)}{g(x;1)}>1, 1\leq x<x_1\ \text{or}\ x>x_2,
\end{align*} where $x_1 = 2.4556$ and $x_2 = 263.0304$, and $g(x;k)=(2k)^{\frac{1}{2}}x(\ln(x))^{\frac{1}{2}}$. Additionally, the minimum of the ratio $f_0(x)$ and $g(x;1)$ is given by
\begin{align}
\min_{x\geq 1} \frac{f_0(x)}{g(x;1)} = \min_{x_1\leq x\leq x_2} \frac{f_0(x)}{x(2\ln(x))^{\frac{1}{2}}} \approx 0.8829,\label{eq:minimum_ratio_f_g}
\end{align} and is attained at $x_{\text{min}} = 5.7889$. The maximum of the ratio $f_0(x)$ and $g(x;1)$ is given by
\begin{align*}
\max_{x\geq 1} \frac{f_0(x)}{g(x;1)} \approx 1.0223,
\end{align*} and is attained at $x_{\text{max}} = 380223$. However, the computation of the maximum of the ratio of $f_0(x)$ and $g(x;1)$ is much more involved than the computation of the minimum of the ratio of $f_0(x)$ and $g(x;1)$, because evaluation of the function $f_0(x)$ for large entries is difficult. In Lemma \ref{lemma:maximum_ratio_f0_g}, we content ourselves with a reasonably sharp upper bound on the maximum of the ratio of $f_0(x)$ and $g(x;1)$ over $x\geq x_2$.

We now turn to the proof of inequality \eqref{eq:lowerbound_f_g}. We consider two regimes, i.e., $t_1(k)\leq t \leq \sqrt{2/k}$ and $t\geq \sqrt{2/k}$. We have for $t_1(k)\leq t\leq \sqrt{2/k}$,
\begin{align}
\frac{f(t;k)}{g(t;k)} & \geq \frac{1}{t(2k\ln(t))^{\frac{1}{2}}} \nonumber \\
& \geq \frac{1}{\sqrt{2/k}\left(2k\ln(\sqrt{2/k}) \right)^{\frac{1}{2}}} \nonumber \\
& = \frac{1}{2(\ln(\sqrt{2/k}))^{\frac{1}{2}}},\label{eq:first_regime}
\end{align} where we used that $f(t;k)$, with $f(0;k)=1$ and $g(t;k)=(2k)^{\frac{1}{2}}t(\ln(t))^{\frac{1}{2}}$ are positive, increasing functions of $t>1$. Next, we let $t\geq \sqrt{2/k}$. We have
\begin{align}
\frac{f(t;k)}{g(t;k)} & = \frac{cf_0(a+bt)}{t\sqrt{2k\ln(t)}} \nonumber \\
& = \frac{f_0(a+bt)}{g(a+bt;1)}\frac{ac+bct}{t\sqrt{k}}\sqrt{\frac{\ln(a+bt)}{\ln(t)}}. \label{eq:three_terms}
\end{align} We consider each factor in the right-hand side of \eqref{eq:three_terms} separately. For the first factor, we use the numerical result that the minimum of the ratio of the functions $f_0$ and $g$ is given in \eqref{eq:minimum_ratio_f_g}. For the second and third factor, we notice, from \eqref{eq:a}--\eqref{eq:c} and $t\geq \sqrt{2/k}$, that
\begin{align}
a>0, b>\sqrt{k}, bc=\sqrt{k}, a+bt\geq \sqrt{k}\sqrt{2/k} = \sqrt{2}.\label{eq:two_inequalities_a_b}
\end{align} Hence, for the second factor, we get
\begin{align*}
\frac{ac+bct}{t\sqrt{k}} > \frac{\sqrt{k}t}{t\sqrt{k}} = 1,
\end{align*} and for the third factor,
\begin{align}
\min_{t\geq \sqrt{2/k}}\frac{\ln(a+bt)}{\ln(t)} > \min_{t\geq \sqrt{2/k}} \frac{\ln(t\sqrt{k})}{\ln(t)}. \label{min_frac_of_ln}
\end{align} However, the right-hand side of \eqref{min_frac_of_ln} is equal to $1$ when $k\geq 1$, and equal to $\frac{\ln{\sqrt{2}}}{\ln\sqrt{2/k}}$ when $0<k<1$. Therefore, 
\begin{align*}
\min_{t\geq \sqrt{2/k}} \frac{\ln(t\sqrt{k})}{\ln(t)} \geq \frac{\ln(\sqrt{2/k_c})}{\ln(\sqrt{2/k})}
\end{align*} when $0<k\leq k_c$. Hence, combining the inequalities for each factor in \eqref{eq:three_terms}, we get, for $t\geq \sqrt{2/k}$,
\begin{align*}
\frac{f(t;k)}{g(t;k)} \geq 0.8829\cdot 1\cdot \left(\frac{\ln\left(\sqrt{2/k_c}\right)}{\ln\left(\sqrt{2/k}\right)}\right)^{\frac{1}{2}} = \frac{0.5055}{\left(\ln(\sqrt{2/k})\right)^{\frac{1}{2}}}.
\end{align*} Together with \eqref{eq:first_regime} this gives the desired result.

Now, we turn to the proof of inequality \eqref{eq:upperbound_f_g}. We follow the same approach as in the proof of inequality \eqref{eq:lowerbound_f_g}. Thus, we consider each factor of the right-hand side of \eqref{eq:three_terms} separately. The right-hand side of \eqref{eq:three_terms} is now to be considered for $t\geq t_2(k)$, and so it is important to have specific information about $t_2(k)$. We claim that $t_2(k)\geq \exp(e^{2-k_c}/2k)$. This claim is proven in Lemma \ref{lemma:t2(k)} below. 

We consider $x=a+bt$ with $t\geq t_2(k)$. Now, by the first two inequalities in \eqref{eq:two_inequalities_a_b}, we get
\begin{align*}
a+bt_2(k)> \sqrt{k}\exp(e^{2-k_c}/2k).
\end{align*} Notice that the function $k>0 \mapsto \sqrt{k}\exp(e^{2-k_c}/2k)$ is a decreasing function of $k$ for $0<k\leq k_c$, and therefore,
\begin{align*}
a+bt_2(k) > \sqrt{k_c}\exp(e^{2-k_c}/2k_c) \approx 3.5909>x_1.
\end{align*} Hence, it is sufficient to bound the function $f_0(x)/g(x;1)$ for $x\geq x_2$. Furthermore, by Lemma \ref{lemma:maximum_ratio_f0_g}, we have
\begin{align*}
\frac{f_0(x)}{g(x;1)} &\leq 1.12.
\end{align*}
For the second factor, we notice, since $bc=\sqrt{k}$, that we have
\begin{align*}
\frac{ac+bct}{t\sqrt{k}} = 1+\frac{ac}{t\sqrt{k}}.
\end{align*} Now, from \eqref{eq:a} and \eqref{eq:c},
\begin{align*}
\frac{ac}{\sqrt{k}} & = \frac{1}{\sqrt{k}}\sqrt{2}\int_0^{\frac{w}{\sqrt{2k}}}\exp(v^2)dv \cdot \exp(-w^2/2k) \\
& \leq \frac{1}{\sqrt{k}}\sqrt{2}\cdot \frac{w}{\sqrt{2k}} = \frac{w}{k}\\
& \leq \frac{w(k_c)}{k_c} \approx 0.5988,
\end{align*} where in the last line it has been used that $w/k$ is an increasing function of $k$; see Lemma \ref{lemma:increasing_W}. Hence, for all $k\leq k_c$ and all $t\geq t_2(k)\geq t_0(k)$, we can bound the second factor by
\begin{align*}
\frac{ac+bct}{t\sqrt{k}}\leq 1+\frac{w(k_c)}{k_ct_0(k_c)} \approx 1.0326.
\end{align*} For the third factor, we have by \eqref{eq:a},
\begin{align*}
a = \sqrt{2}\int_0^{\frac{w}{\sqrt{2k}}}\exp(v^2)dv \leq \frac{w}{\sqrt{k}}\exp(w^2/2k).
\end{align*} Hence, by \eqref{eq:b},
\begin{align*}
\ln(a+bt) &\leq \frac{w^2}{2k} + \ln\left(\left(\frac{w}{k}+t\right)\sqrt{k} \right) \\
&\leq \frac{w^2(k_c)}{2k_c}+\ln\left(\left(\frac{w(k_c)}{k_c}+t\right)\sqrt{k} \right).
\end{align*} Therefore, for all $t\geq t_2(k)$,
\begin{align*}
\left(\frac{\ln(a+bt)}{\ln(t)} \right)^{\frac{1}{2}} & \leq \left(\frac{\frac{w^2(k_c)}{2k_c}+\ln\left((\frac{w(k_c)}{k_c}+t)\sqrt{k} \right)}{\ln(t)} \right) \\
& \leq \left(1+\frac{\frac{w^2(k_c)}{2k_c}+\ln(\sqrt{k})+\frac{w(k_c)}{tk_c}}{\ln(t)} \right)^{\frac{1}{2}} \\
& \leq \left(1+\frac{\frac{w^2(k_c)}{2k_c}+\ln(\sqrt{k_c})+\frac{w(k_c)}{t_0(k_c)k_c}}{\ln(t_0(k_c))} \right)^{\frac{1}{2}} \approx 1.0400.
\end{align*} Combining all inequalities for each factor in \eqref{eq:three_terms}, we get
\begin{align*}
\frac{f(t;k)}{g(t;k)} \leq 1.12 \cdot 1.0326\cdot 1.0400 \approx 1.2023.
\end{align*}
\end{proof}

\begin{lemma}\label{lemma:maximum_ratio_f0_g}
Let $f_0(x)$ be given by \eqref{eq:f_0(x)} and let $g(x;1)$ be given by $g(x;1)=x(2\ln(x))^{\frac{1}{2}}$. Then,
\begin{align*}
\frac{f_0(x)}{g(x;1)}\leq 1.12,\quad x\geq x_2.
\end{align*}
\end{lemma}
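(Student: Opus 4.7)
My plan is to combine the elementary upper bound on $\exp(U^2)$ from Lemma \ref{lemma:ineq_I(y)} with an explicit upper bound on $U(x)$ obtained via the iteration framework of the proof of Theorem \ref{THM:LIMITING_BEHAVIOR}. Applying the first inequality of \eqref{eq:inequalities_int_exp} with $y=U(x)$ and using \eqref{eq:Ux} gives $\exp(U^2(x))\leq 1+U(x)\,x\sqrt 2$, and dividing by $g(x;1)=x\sqrt{2\ln x}$ yields
\begin{align*}
\frac{f_0(x)}{g(x;1)}\leq \frac{1}{x\sqrt{2\ln x}}+\frac{U(x)}{\sqrt{\ln x}}.
\end{align*}
Since $x\sqrt{2\ln x}$ is increasing and attains the value $\approx 878.1$ at $x=x_2$, the first summand is bounded by $1.14\cdot 10^{-3}$ on $[x_2,\infty)$, and the problem reduces to establishing $U(x)\leq A\sqrt{\ln x}$ on $[x_2,\infty)$ with some constant $A$ slightly below $1.12$; concretely I would take $A=1.118$.

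To produce such an $A$, I plan to reuse the fixed-point analysis from the proof of Theorem \ref{THM:LIMITING_BEHAVIOR}. With $z=x\sqrt 2$ and $h_z(y)=(\ln(1+zy))^{1/2}$, we have $U(x)\leq y_{LB}(x)$, where $y_{LB}(x)$ is the unique fixed point of $h_z$ in $[1,z]$ (available whenever $z>e-1$, which certainly holds on $[x_2,\infty)$). Because $h_z$ is strictly increasing and concave on $[0,\infty)$ with $h_z(0)=0$, the function $h_z(y)-y$ is positive on $(0,y_{LB})$ and negative on $(y_{LB},\infty)$; in particular $y_{LB}(x)\leq A\sqrt{\ln x}$ is equivalent to $h_z(A\sqrt{\ln x})\leq A\sqrt{\ln x}$, which upon squaring and exponentiating rearranges to
\begin{align*}
\phi(x) := x^{A^2}-A\sqrt 2\, x\sqrt{\ln x}-1 \geq 0.
\end{align*}

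I would then discharge $\phi\geq 0$ on $[x_2,\infty)$ in two short steps. First, a direct numerical evaluation at $x=x_2$ with $A=1.118$ (so $A^2\approx 1.2499$) gives $x_2^{A^2}\approx 1058$ against $A\sqrt 2\, x_2\sqrt{\ln x_2}\approx 982$, so $\phi(x_2)>0$. Second, $\phi'(x)=A^2 x^{A^2-1}-A\sqrt 2\bigl(\sqrt{\ln x}+1/(2\sqrt{\ln x})\bigr)$ is positive at $x_2$ by direct substitution and remains positive for every $x>x_2$: the leading term grows like $x^{1/4}$ while the subtracted term grows only like $\sqrt{\ln x}$. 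Combining the resulting bound $U(x)\leq 1.118\sqrt{\ln x}$ with the estimate of the first paragraph gives $f_0(x)/g(x;1)\leq 1.118+0.002\leq 1.12$ on $[x_2,\infty)$.

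The main obstacle is the tightness of the target constant. The true supremum of $f_0/g$ on $[x_2,\infty)$ is only about $1.022$, whereas the cheap bound I am after has the shape $A+\mathcal{O}\bigl(1/(x\sqrt{\ln x})\bigr)$, so $A$ has to be calibrated to within a few thousandths of $1.12$ and the numerical inequality $\phi(x_2)\geq 0$ must be verified with sufficient precision. The structural ingredients — the bound $U\leq y_{LB}$, the concavity and strict monotonicity of $h_z$, and the eventual monotonicity of $\phi$ — are all already in place from earlier sections of the paper, so the remaining work is essentially a one-point numerical check together with a monotonicity argument for $\phi'$.
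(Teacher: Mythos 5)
Your proposal is correct and is essentially the paper's own proof in different coordinates: the paper applies the same first inequality of \eqref{eq:inequalities_int_exp} to get $f_0(x)\leq 1+x(2\ln f_0(x))^{1/2}$, compares $f_0$ with the fixed point of $G(z)=1+x(2\ln z)^{1/2}$ (which is your fixed point of $h_z$ under the substitution $z=\exp(y^2)$), and reduces to the condition $g(x;1)\leq(x^{\alpha^2}-1)/\alpha$, which is literally your $\phi(x)\geq 0$, verified at $x_2$ (with $\alpha_2=1.1115$ in place of your $A=1.118$) and propagated by a derivative comparison. The only loose point is that ``positive at $x_2$ plus faster asymptotic growth'' does not by itself give $\phi'>0$ on all of $[x_2,\infty)$; this is easily patched by checking that $\frac{d}{dx}\bigl(A^2x^{A^2-1}\bigr)$ dominates the derivative of the subtracted term on $[x_2,\infty)$, which is the analogue of the inequality chain the paper supplies for its version of the condition.
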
 

\begin{proof}
From \eqref{eq:f_0(x)},\eqref{eq:Ux} and the first inequality of \eqref{eq:inequalities_int_exp}, we have for $x>0$
\begin{align*}
\frac{x}{\sqrt{2}} = \int_0^{(\ln(f_0(x)))^{\frac{1}{2}}}\exp(u^2)du \geq \frac{f_0(x)-1}{2(\ln(f_0(x)))^{\frac{1}{2}}},
\end{align*} i.e.,
\begin{align}
f_0(x)\leq 1+x(2\ln(f_0(x))^{\frac{1}{2}}.\label{eq:ineq_f0(x)}
\end{align} Let $x>0$ be fixed and consider the mapping
\begin{align*}
G:z\geq 1 \mapsto 1+x(2\ln(z))^{\frac{1}{2}}.
\end{align*} Then $G$ maps $[1,\infty)$ onto $[1,\infty)$ with $G(1)=1$ and $G(\infty)=\infty$, $G$ is strictly concave on $[1,\infty)$, and $G'(z)$ decreases from $\infty$ to $0$ as $z$ increases from $1$ to $\infty$. Therefore, $G$ has a unique fixed point $z(x)$ in $(1,\infty)$. We have for any $z_1>1,z_2>1$ that
\begin{align}
z_1\leq z(x) \iff z_1 \leq 1+x(2\ln(z_1))^{\frac{1}{2}},\quad z_2\geq z(x) \iff z_2\geq 1+x(2\ln(z_2))^{\frac{1}{2}}. \label{eq:z1,z2}
\end{align} Note that $f_0(x)\leq z(x)$ by \eqref{eq:ineq_f0(x)}. Now let $\alpha>1$ and consider $z_2 = 1+\alpha g(x;1)$, where we take $\alpha$ such that $z_2\geq 1+x(2\ln(z_2))^{\frac{1}{2}}$. An easy computation shows that with this $z_2 = 1+\alpha g(x;1)$,
\begin{align}
z_2\geq 1+x(2\ln(z_2))^{\frac{1}{2}} \iff g(x;1)\leq \frac{x^{\alpha^2}-1}{\alpha}.\label{eq:z2,g}
\end{align} We consider all this for $x\geq x_2 = 263.0340$. We have
\begin{align*}
g(x_2;1)=x_2(2\ln(x_2))^{\frac{1}{2}} = \frac{x_2^{\alpha^2}-1}{\alpha}
\end{align*} for $\alpha = 1.1115 :=\alpha_2$. Furthermore, when we have an $x\geq x_2$ such that $g(x;1)\leq \frac{1}{\alpha}(x^{\alpha^2}-1)$, we have
\begin{align*}
\frac{d}{dx}\left[\frac{1}{\alpha}(x^{\alpha^2}-1) \right] & = \frac{\alpha}{x}x^{\alpha^2}\geq \frac{\alpha}{x}\left(1+\alpha g(x;1) \right) \\
& = \frac{\alpha}{x}+\frac{\alpha^2}{x}x(2\ln(x))^{\frac{1}{2}} \\
& > \alpha^2(2\ln(x))^{\frac{1}{2}} \\
& \geq (2\ln(x))^{\frac{1}{2}} + \frac{1}{(2\ln(x))^{\frac{1}{2}}} = g'(x;1),
\end{align*} where the last inequality holds when $\alpha^2-1\geq \frac{1}{2\ln(x)}$. The latter inequality certainly holds for $\alpha = \alpha_2$ and $x\geq x_2$. Hence,
\begin{align*}
g(x_2;1) = \frac{x_2^{\alpha_2^2}-1}{\alpha_2};\quad g'(x;1)<\frac{d}{dx}\left[\frac{1}{\alpha_2}(x^{\alpha_2^2}-1) \right],\quad x\geq x_2,
\end{align*} and we conclude that
\begin{align*}
g(x;1)\leq \frac{x^{\alpha_2^2}-1}{\alpha_2},\quad x\geq x_2.
\end{align*} Then, by \eqref{eq:z1,z2} and \eqref{eq:z2,g} and $f_0(x)\leq z(x)$, we get that
\begin{align*}
z_2 = 1+\alpha_2g(x;1)\geq z(x)\geq f(x), \quad x\geq x_2.
\end{align*} This implies that
\begin{align*}
\frac{f_0(x)}{g(x;1)} \leq \alpha_2 + \frac{1}{g(x;1)}\leq \alpha_2 + \frac{1}{g(x_2;1)} \leq 1.12, \quad x\geq x_2,
\end{align*} since $\alpha_2\leq 1.112$ and $(g(x_2;1))^{-1}\leq 0.008$ as required. 
\end{proof}

\begin{lemma}\label{lemma:t2(k)}
Let $f(t)$ be given by \eqref{eq:f} with initial conditions $f(0)=1, f'(0)=w$ such that $f(1)=1+k$, and let $g(t)$ be given by \eqref{eq:f(t)_approx} for $0<k\leq k_c$. Then,
\begin{align*}
t_2(k) \geq \exp\left(\frac{e^{2-k_c}}{2k} \right),
\end{align*} where $t_2(k)$ is given as in Theorem \ref{thm:cases_k}, case $b$.
\end{lemma}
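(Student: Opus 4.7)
The plan is to reduce the lower bound on $t_2(k)$ to a lower bound on $t_0(k)$, and then extract that bound from the implicit equation $\psi(t_0(k)) = w^2$, using the monotonicity provided by Lemma \ref{lemma:increasing_W} to control $w^2/k$.

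First, I would establish $t_2(k) \geq t_0(k)$. By Lemma \ref{lemma:F(t,k)}, the map $t \mapsto F(t,k)$ has its unique critical point on $[1,\infty)$ at $t_0(k)$, and this is a maximum. For $0 < k < k_c$, the definition of $k_c$ together with Lemmas \ref{lemma:positive_small_k} and \ref{lemma:F(t,k)_decreasing} gives $F(t_0(k),k) > 0$, while the zeros of $F(\cdot,k)$ on $[1,\infty)$ are precisely $t_1(k)$ and $t_2(k)$ (since $F(t,k)=0 \iff f(t)=g(t)$). Uniqueness of the critical point then forces $t_1(k) < t_0(k) < t_2(k)$.

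Second, I would rewrite $\psi(t_0(k)) = w^2$ as an explicit lower bound on $t_0(k)$. From \eqref{eq:psi}, dropping the nonnegative term $k/(2\ln t_0(k))$ gives
\begin{align*}
k \ln\bigl(2k \ln t_0(k)\bigr) \;\geq\; 2k - w^2,
\end{align*}
which after dividing by $k$ and exponentiating twice yields
\begin{align*}
t_0(k) \;\geq\; \exp\!\left(\frac{e^{2 - w^2/k}}{2k}\right).
\end{align*}

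Third, I would show that $w^2/k \leq k_c$ throughout $0 < k \leq k_c$. By Lemma \ref{lemma:increasing_W}, $W(k) = w/\sqrt{2k}$ is strictly increasing, and hence so is $w^2/k = 2W(k)^2$. Therefore $w^2/k \leq w(k_c)^2/k_c$, and the numerical values $w(k_c) \approx 0.6218$ and $k_c \approx 1.0384$ quoted earlier yield $w(k_c)^2/k_c \approx 0.37$, well below $k_c$. Consequently $e^{2 - w^2/k} \geq e^{2 - k_c}$, and chaining the inequalities gives $t_2(k) \geq t_0(k) \geq \exp(e^{2-k_c}/(2k))$, as required. I do not anticipate any serious obstacle; the only delicate point is the numerical verification $w(k_c)^2/k_c < k_c$, but since the proof of Theorem \ref{thm:bounds_f/g} already relies on numerical values of $k_c$ and $w(k_c)$ to four significant figures, this fits the paper's established style.
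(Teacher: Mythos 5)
Your proof is correct and follows essentially the same route as the paper: reduce to $t_0(k)$ via Theorem \ref{thm:cases_k}(b), drop the nonnegative term $k/(2\ln t_0(k))$ in $\psi(t_0(k))=w^2$, exponentiate, and use the monotonicity of $w^2/k=2W(k)^2$ from Lemma \ref{lemma:increasing_W}. The only cosmetic difference is that the paper verifies $w^2(k_c)/k_c\leq k_c$ analytically from the inequality $w\leq k$ (established in the proof of Lemma \ref{lemma:positive_small_k}) rather than by the numerical values of $w(k_c)$ and $k_c$.
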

\begin{proof}
Let $f(t)$ be given by \eqref{eq:f} with initial conditions $f(0)=1, f'(0)=w$ such that $f(1)=1+k$, and let $g(t)$ be given by \eqref{eq:f(t)_approx}. By Theorem \ref{thm:cases_k} case (b), we have $t_2(k)\geq t_0(k)$, where $t_0(k)$ is the unique root of the equation
\begin{align}
2+\frac{1}{2\ln(t)}-\ln(2k\ln(t)) = \frac{w^2}{k},\label{eq:soly}
\end{align} see \eqref{eq:equation_psi}. If we denote $y=2\ln(t_0(k))$, then the solution $y(k)$ of \eqref{eq:soly} satisfies
\begin{align*}
y(k) > \frac{1}{k}\exp\left(2-\frac{w^2}{k}\right)\geq \frac{1}{k}\exp(2-\frac{w^2(k_c)}{k_c}) \geq \frac{1}{k}\exp(2-k_c),
\end{align*} since $\frac{w^2}{k}$ increases in $k$ and $w^2\leq k^2$. Now, using that $y=2\ln(t_0(k))$, we get
\begin{align*}
t_0(k) \geq \exp\left(\frac{e^{2-k_c}}{2k}\right).
\end{align*} Since $t_2(k)\geq t_0(k)$, we have the desired result.
\end{proof}
\end{appendices}

\bibliography{PhD-Asymptotic_Analysis}
\end{document}